\setlist[enumerate]{leftmargin=.5in}
\setlist[itemize]{leftmargin=.5in}
\crefname{hypothesis}{Hypothesis}{Hypotheses}
\title{Data-Driven Mirror Descent with Input-Convex Neural Networks\thanks{Submitted to the SIAM J. on Mathematics of Data Science
}}
\author{Hong Ye Tan\thanks{Department of Applied Mathematics and Theoretical Physics, University of Cambridge, UK (\email{hyt35@cam.ac.uk}, \email{sm2467@cam.ac.uk}, \email{jt814@cam.ac.uk}, \email{cbs31@cam.ac.uk}).}
\and Subhadip Mukherjee\footnotemark[2] \thanks{Department of Computer Science, University of Bath, UK (\email{sm3655@bath.ac.uk}).}
\and Junqi Tang\footnotemark[2]
\and Carola-Bibiane Sch\"onlieb\footnotemark[2]}
\DeclareMathOperator{\dom}{dom}
\newcommand{\R}{\mathbb{R}}
\DeclareMathOperator*{\argmin}{arg\,min}
\DeclareMathOperator*{\sign}{sign}
\newlength{\subcolumnwidth}
\newenvironment{subcolumns}[1][0.45\columnwidth]
 {\valign\bgroup\hsize=#1\setlength{\subcolumnwidth}{\hsize}\vfil##\vfil\cr}
 {\crcr\egroup}
\newcommand{\nextsubcolumn}[1][]{%
  \cr\noalign{\hfill}
  \if\relax\detokenize{#1}\relax\else\hsize=#1\setlength{\subcolumnwidth}{\hsize}\fi
}
\newcommand{\nextsubfigure}{\vfill}
\newcommand{\billy}[1]{}
\newcommand{\sm}[1]{}
\newcommand{\hy}[1]{}
\DeclareAcronym{MD}{
  short = MD,
  long = mirror descent} 
\DeclareAcronym{PDHG}{
  short = PDHG,
  long = primal-dual hybrid gradient} 
\DeclareAcronym{SVM}{
short = SVM,
long = support vector machine} 
\DeclareAcronym{LMD}{
  short = LMD,
  long = learned mirror descent} 
\DeclareAcronym{ICNN}{
short = ICNN,
long = input-convex neural network} 
\DeclareAcronym{lsc}{
short = l.s.c.,
long = lower semi-continuous} 
\DeclareAcronym{ISTA}{
short = ISTA,
long = iterative shrinkage and thresholding algorithm}
\begin{document}

\maketitle


\begin{abstract}
\textit{Learning-to-optimize} is an emerging framework that seeks to speed up the solution of certain optimization problems by leveraging training data. Learned optimization solvers have been shown to outperform classical optimization algorithms in terms of convergence speed, especially for convex problems. Many existing data-driven optimization methods are based on parameterizing the update step and learning the optimal parameters (typically scalars) from the available data. We propose a novel functional parameterization approach for learned convex optimization solvers based on the classical \ac{MD} algorithm. Specifically, we seek to learn the optimal Bregman distance in \ac{MD} by modeling the underlying convex function using an \ac{ICNN}. The parameters of the ICNN are learned by minimizing the target objective function evaluated at the \ac{MD} iterate after a predetermined number of iterations. The inverse of the mirror map is modeled approximately using another neural network, as the exact inverse is intractable to compute. We derive convergence rate bounds for the proposed \ac{LMD} approach with an approximate inverse mirror map and perform extensive numerical evaluation on various convex problems such as image inpainting, denoising, learning a two-class \ac{SVM} classifier and a multi-class linear classifier on fixed features.
\end{abstract}

\begin{keywords}
  Mirror Descent, data-driven convex optimization solvers, input-convex neural networks, inverse problems.
\end{keywords}

\begin{AMS}
  46N10, 65K10, 65G50
\end{AMS}

\section{Introduction}
Convex optimization problems are pivotal in many modern data science and engineering applications. These problems can generally be formulated as
\begin{equation}\label{eq:convexEqGeneral}
    \min_{x \in \mathcal{X}} \left[f(x) + g(x)\right],
\end{equation}
where $\mathcal{X}$ is a Hilbert space, and $f,g:\mathcal{X} \rightarrow \bar{\R}$ are proper, convex, and \ac{lsc} functions. In different scenarios, $f$ and $g$ have different levels of regularity such as differentiability or strong convexity. In the context of inverse problems, $f$ can be a data fidelity loss and $g$ a regularization function. 


In the past few decades, extensive research has gone into developing efficient and provably convergent optimization algorithms for finding the minimizer of a composite objective function as in \eqref{eq:convexEqGeneral}, leading to several major theoretical and algorithmic breakthroughs. For generic convex programs with first-order oracles, optimal algorithms have been proposed under different levels of regularity \cite{nesterov2013gradient,lan2012optimal,lan2018optimal}, which are able to match the complexity lower-bounds of the problem class. Although there exist algorithms that are optimal for generic problem classes, practitioners in different scientific areas usually only need to focus on a very narrow subclass, for which usually neither tight complexity lower-bounds nor optimal algorithms are known. As such, it is extremely difficult and impractical to either find tight lower-bounds or handcraft specialized optimal algorithms for every single subclass in practice.

The aim of this work is \textit{learning to optimize} convex objectives of the form \eqref{eq:convexEqGeneral} in a provable manner. Learned optimization solvers have been proposed through various methods, including reinforcement learning and unsupervised learning \cite{andrychowicz2016GDbyGD,banert2020,gregor2010learnedcoding,li2016learningtooptimize}. The goal is to minimize a fixed loss function as efficiently as possible, which can be formulated as minimizing the loss after a certain number of iterations, or minimizing the number of iterations required to attain a certain error. The common idea is to directly parameterize the update step as a neural network, taking previous iterates and gradients as arguments. These methods have been empirically shown to speed up optimization in various settings including training neural networks \cite{li2016learningtooptimize, andrychowicz2016GDbyGD}. However, many of these methods lack theoretical guarantees, and there is a lack of principled framework for integrating machine learning into existing classical algorithms. 

Banert et al. developed a theoretically grounded method in \cite{banert2020} for parameterizing such update steps using combinations of proximal steps, inspired by proximal splitting methods. By learning the appropriate coefficients, the method was able to outperform the classical \ac{PDHG} scheme \cite{cham_pock}. However, having a fixed model limits the number of learnable parameters, and therefore the extent to which the solver can be adapted to a particular problem class. Banert et al. later drifted away from the framework of learning parameters of fixed models, and instead directly modeled an appropriate update function using a deviation-based approach, allowing for a more expressive parameterization \cite{banert2021accel}.

Learned optimizers are sometimes modeled using classical methods, as the existing convergence guarantees can lead to insights on how neural networks may be incorporated with similar convergence guarantees. Even if such guarantees are not available, such as in the case of learned \ac{ISTA}, they can still lead to better results on certain problems \cite{gregor2010learnedcoding}. Conversely, Maheswaranathan et al. showed that certain learned optimizers, parameterized by recurrent neural networks, can reproduce classical methods used for accelerating optimization \cite{maheswarnathan2020learnedopti}. By using a recurrent neural network taking the gradient as an input, the authors found that the learned optimizer expresses mechanisms including momentum, gradient clipping, and adaptive learning rates.

One related idea to our problem is meta-learning, also known as ``learning to learn". This typically concerns learning based on prior experience with similar tasks, utilizing techniques such as transfer learning, to learn how similar an optimization task is to previous tasks using statistical features \cite{vanschoren2018metaL}. Our problem setting will instead be mainly concerned with convex optimization problems, as there are concrete classical results for comparison. 

Integrating machine learning models into classical algorithms can also be found notably in Plug-and-Play (PnP) algorithms. Instead of trying to learn a solver for a general class of optimization problems, PnP methods deal with the specific class of image restoration. By using proximal splitting algorithms and replacing certain proximal steps with generic denoisers, the PnP algorithms, first proposed by Venkatakrishnan et al. in 2013, were able to achieve fast and robust convergence for tomography problems \cite{venkatakrishnan2013PnP}. This method was originally only motivated in an intuitive sense, with some analysis of the theoretical properties coming years later by Chan et al. \cite{chan2016PnP}, and more recently by Ryu et al. \cite{ryu2019PnP}. Most critically, many subsequent methods of showing convergence rely on classical analysis such as monotone operator and fixed point theory, demonstrating the importance of having a classical model-based framework to build upon.

One of the main difficulties in learning to optimize is the choice of function class to learn on. Intuitively, a more constrained function class may allow for the learned method to specialize more. However, it is difficult to quantify the similarity between the geometry of different problems. Banert et al. proposed instead to use naturally or qualitatively similar function classes in \cite{banert2021accel}, including regularized inverse problems such as inpainting or denoising, which will be used in this work as well.
               
\subsection{Contributions}
We propose to learn an alternative parameterization using mirror descent (MD), which is a well-known convex optimization algorithm first introduced by Nemirovsky and Yudin \cite{nemirovsky1983MD}. Typical applications of \ac{MD} require hand-crafted mirror maps, which are limited in complexity by the requirement of a closed-form convex conjugate. We propose to replace the mirror map  in \ac{MD} with an input convex neural network (ICNN) \cite{amos2017icnn}, which has recently proved to be a powerful parameterization approach for convex functions \cite{acr_arxiv}. By modeling the mirror map in this manner, we seek to simultaneously introduce application-specific optimization routines, as well as learn the problem geometry. 

Using our new paradigm, we are able to obtain a learned optimization scheme with convergence guarantees in the form of regret bounds\sm{This sounds cryptic. Non-trivial in what sense? If you are referring to the dependence of the regret bound on $t_k$, it is better to write it explicitly.}\hy{reworded(?)}. We observe numerically that our learned mirror descent (LMD) algorithm is able to adapt to the structure of the class of optimization problems that it was trained on, and provide significant acceleration.

This paper is organized as follows. In \cref{sec:background}, we recall the \ac{MD} algorithm and the existing convergence rate bounds. \Cref{sec:main} presents our main results on convergence rate bounds with inexact mirror maps, and a proposed procedure of `learning' a mirror map. In \cref{sec:exactMDMaps}, we will show some simple examples of both \ac{MD} and its proposed learned variant \ac{LMD} in the setting where the inverse map is known exactly. \Cref{sec:experiments} deals with numerical experiments with inverse problems in imaging and linear classifier learning.


\section{Background}\label{sec:background}
In this section, we will outline the \ac{MD} method as presented by Beck and Teboulle \cite{BECK2003167}. Convergence guarantees for convex optimization methods commonly involve a Lipschitz constant with respect to the Euclidean norm. However, depending on the function, this may scale poorly with dimension. Mirror descent circumvents this by allowing for this Lipschitz constant to be taken with respect to other norms such as the $\ell^1$ norm.  This has been shown to scale better with dimension compared to methods such as projected subgradient descent on problems including online learning and tomography \cite{bubeck2014convex, orabona2015generalized, tomographyMD2001bental}. Further work has been done by Gunasekar et al., showing that \ac{MD} is equivalent to natural/geodesic gradient descent on certain Riemannian manifolds \cite{gunasekar2021mirrorlessMD}. We will continue in the simpler setting where we have a potential given by a strictly convex $\Psi$ to aid parameterization, but this can be replaced by a suitable Hessian metric tensor. 

Let $\mathcal{X}\subset \R^n$ be a closed convex set with nonempty interior. Let $(\R^n)^*$ denote the corresponding dual space of $\R^n$.

\begin{definition}[Mirror Map]
We say $\Psi:\mathcal{X} \rightarrow \R$ is a \textbf{mirror potential} if it is continuously differentiable and strongly convex. We call the gradient $\nabla \Psi:\mathcal{X} \rightarrow (\R^n)^*$ a \textbf{mirror map}.\sm{Mention that $^*$ indicates the dual space.} \hy{added}
\end{definition}
\begin{remark}
A mirror potential $\Psi$ may also be referred to as a \textit{distance generating function}, as a convex map induces a Bregman distance $B_\Psi(x,y)$, defined by $B_\Psi(x,y) = \Psi(x) - \Psi(y) - \langle \nabla \Psi(y), x-y \rangle$. For example, taking $\Psi(x) = \|x\|_2^2$ recovers the usual squared Euclidean distance $B_\Psi(x,y) = \|x-y\|_2^2$.
\end{remark}
If $\Psi$ is a mirror potential, then the convex conjugate $\Psi^*$ defined as 
\begin{equation*}
    \Psi^*(x^*) = \sup_{x \in \mathcal{X}}\,\{\langle x^*, x\rangle - \Psi(x)\}
\end{equation*}
is differentiable everywhere, and additionally satisfies $\nabla \Psi^* = (\nabla \Psi)^{-1}$ \cite{BECK2003167,RockWets1998VA}. The (forward) mirror map $\nabla \Psi$ \textit{mirrors} from the primal space $\mathcal{X}$ into a subset of the dual space $(\R^n)^*$, and the inverse (backward) mirror map $\nabla \Psi^*$ \textit{mirrors} from the dual space $\dom\left(\nabla \Psi^*\right) \subseteq (\R^n)^*$ back into the primal space $\mathcal{X}$.

Suppose first that we are trying to minimize a convex differentiable function $f$ over the entire space $\mathcal{X} = \R^n$, $\min_{x \in \mathcal{X}} f(x)$. Suppose further for simplicity that $\dom \left(\nabla \Psi^*\right) = (\R^n)^*$. For an initial point $x_0 \in \mathcal{X}$ and a sequence of step-sizes $(t_k)_{k\ge 0},\, t_k>0$, the mirror descent iterations can be written as follows:%
\begin{equation}\label{eq:MDDef}
        y_{k} = \nabla \Psi (x_k) - t_k \nabla f (x_k),\ x_{k+1} = \nabla \Psi^* (y_k).
\end{equation}
There are two main sequences, $(x_k)_{k=0}^\infty$ in the primal space $\mathcal{X}$ and $(y_k)_{k=0}^\infty$ in the dual space $(\R^n)^*$. The gradient step at each iteration is performed in the dual space, with the mirror map $\nabla \Psi$ mapping between them. Observe that if $\Psi = \frac{1}{2}\|x\|_2^2$, then $\nabla \Psi$ is the identity map $\R^n \rightarrow (\R^n)^*$ and we recover the standard gradient descent algorithm. An equivalent formulation of the \ac{MD} update rule in \eqref{eq:MDDef} is the subgradient algorithm \cite{BECK2003167}:%
\begin{equation}\label{eq:SANP}
    x_{k+1} = \argmin_{x \in \mathcal{X}} \left\{ \langle x, \nabla f(x_k) \rangle + \frac{1}{t_k}B_{\Psi}(x, x_k)\right\}.
\end{equation}
This can be derived by using the definitions of the Bregman distance and of the convex conjugate $\Psi^*$. The convexity of $\Psi$ implies that the induced Bregman divergence $B_\Psi$ is non-negative, which allows for this iteration to be defined. Observe again that if $\Psi = \frac{1}{2}\|x\|_2^2$, then $B_\Psi(x,y) = \frac{1}{2} \|x-y\|_2^2$ and we recover the argmin formulation of the gradient descent update rule.

\ac{MD} enjoys the following convergence rate guarantees. Let $\|\cdot \|$ be a norm on $\R^n$, and $\|\cdot\|_* = \max \{\langle \cdot, x\rangle : x \in \R^n,\ \|x\|\le 1\}$ be the corresponding dual norm. For a set $\mathcal{X} \subseteq \R^n$, let $\text{int}(\mathcal{X})$ denote the interior of $\mathcal{X}$.
\sm{Before stating the theorems, write a short paragraph on notations, defining notations for interiors, dual space, etc.}\hy{added}
\begin{theorem} \cite[Thm 4.1]{BECK2003167}\label{thm:MDConvRate}
Let $\mathcal{X}$ be a closed convex subset of $\R^n$ with nonempty interior, and $f:\mathcal{X} \rightarrow \R$ a convex function. Suppose that $\Psi$ is a $\sigma$-strongly convex mirror potential. Suppose further that the following hold:
\begin{enumerate}
    \item $f$ is Lipschitz with Lipschitz constant $L_f$ with respect to $\|\cdot\|$\sm{Clearly define the notation for the underlying norm. You are using its dual in the statement.}\hy{added};
    \item The set of minimizers $\min_{x \in \mathcal{X}} f(x)$ is nonempty; let $x^*$ be a minimizer of $f$. \sm{$x^*$ is not defined before.} \hy{made clearer}
\end{enumerate}
Let $\{x_k\}_{k=1}^\infty$ be the sequence generated by the MD iterations \eqref{eq:MDDef} with starting point $x_1 \in \text{int}(\mathcal{X})$. Then the iterates satisfy the following regret bound:
\begin{equation}
    \sum_{k=1}^s t_k(f(x_k) - f(x^*))\le B_\Psi(x^*, x_1) - B_\Psi(x^*, x_{s+1}) + (2\sigma)^{-1} \sum_{k=1}^s t_k^2 \|\nabla f(x_k)\|_*^2.
\end{equation}
In particular, we have
\begin{equation}
    \min_{1\le k \le s} f(x_k) - f(x^*) \le \frac{B_\Psi(x^*, x_1) + (2\sigma)^{-1} \sum_{k=1}^s t_k^2 \|\nabla f(x_k)\|_*^2}{\sum_{k=1}^s t_k}.
\end{equation}
\end{theorem}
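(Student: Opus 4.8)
The plan is to prove a per-iteration descent-type inequality and then telescope over the summation index. First I would exploit the defining relation of the iterates: since $\nabla\Psi^* = (\nabla\Psi)^{-1}$, the update \eqref{eq:MDDef} is equivalent to $\nabla\Psi(x_{k+1}) = \nabla\Psi(x_k) - t_k\nabla f(x_k)$, so that $t_k\nabla f(x_k) = \nabla\Psi(x_k) - \nabla\Psi(x_{k+1})$. Combining this with the convexity bound $f(x_k) - f(x^*) \le \langle\nabla f(x_k), x_k - x^*\rangle$ reduces the task to controlling the inner product $t_k\langle\nabla f(x_k), x_k - x^*\rangle = \langle\nabla\Psi(x_k) - \nabla\Psi(x_{k+1}), x_k - x^*\rangle$.

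The structural key is the three-point identity for Bregman distances,
\[
\langle\nabla\Psi(x_k) - \nabla\Psi(x_{k+1}), x_k - x^*\rangle = B_\Psi(x^*, x_k) - B_\Psi(x^*, x_{k+1}) + B_\Psi(x_k, x_{k+1}),
\]
which follows directly from the definition $B_\Psi(x,y) = \Psi(x) - \Psi(y) - \langle\nabla\Psi(y), x-y\rangle$ once the potential-value terms cancel. This turns the per-iteration estimate into $t_k(f(x_k) - f(x^*)) \le B_\Psi(x^*, x_k) - B_\Psi(x^*, x_{k+1}) + B_\Psi(x_k, x_{k+1})$, the first two terms being exactly those that telescope upon summation.

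The main obstacle, and the only place where the strong-convexity constant $\sigma$ enters, is bounding the leftover term $B_\Psi(x_k, x_{k+1})$ from above. I would rewrite $B_\Psi(x_k, x_{k+1}) = \langle\nabla\Psi(x_k) - \nabla\Psi(x_{k+1}), x_k - x_{k+1}\rangle - B_\Psi(x_{k+1}, x_k)$, substitute $\nabla\Psi(x_k) - \nabla\Psi(x_{k+1}) = t_k\nabla f(x_k)$, and then apply the generalized Cauchy--Schwarz inequality $\langle\nabla f(x_k), x_k - x_{k+1}\rangle \le \|\nabla f(x_k)\|_*\,\|x_k - x_{k+1}\|$ together with the $\sigma$-strong convexity lower bound $B_\Psi(x_{k+1}, x_k) \ge \tfrac{\sigma}{2}\|x_{k+1} - x_k\|^2$. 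This yields $B_\Psi(x_k, x_{k+1}) \le t_k\|\nabla f(x_k)\|_*\|x_k - x_{k+1}\| - \tfrac{\sigma}{2}\|x_k - x_{k+1}\|^2$, and maximizing the right-hand side over the scalar $\|x_k - x_{k+1}\|$ (completing the square) produces precisely $B_\Psi(x_k, x_{k+1}) \le (2\sigma)^{-1}t_k^2\|\nabla f(x_k)\|_*^2$. Extracting the sharp constant here is the delicate step, as it is exactly what forces the factor $(2\sigma)^{-1}$ appearing in the statement.

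Finally I would sum the resulting per-iteration inequality over $k = 1, \dots, s$; the Bregman terms telescope to $B_\Psi(x^*, x_1) - B_\Psi(x^*, x_{s+1})$, giving the regret bound. For the second inequality I would use $t_k > 0$ to lower-bound the left-hand sum by $\bigl(\min_{1\le k \le s}(f(x_k) - f(x^*))\bigr)\sum_{k=1}^s t_k$, discard the nonnegative term $B_\Psi(x^*, x_{s+1}) \ge 0$ (nonnegativity following from convexity of $\Psi$, as already noted in the excerpt), and divide through by $\sum_{k=1}^s t_k > 0$.
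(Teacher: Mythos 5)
Your proposal is correct and follows essentially the same route as the source the paper cites for this result (Beck--Teboulle, Thm.~4.1): the paper itself does not reproduce a proof of \cref{thm:MDConvRate}, but your amortization of $B_\Psi(x^*,x_k)-B_\Psi(x^*,x_{k+1})$ via the three-point identity, followed by completing the square against the $\sigma$-strong-convexity term, is exactly the standard argument and is also the template the paper uses for its generalization in \cref{thm:approxMD}. The only noteworthy detail is that your completion of the square recovers the sharp constant $(2\sigma)^{-1}$, whereas the paper's Young-inequality splitting in the proof of \cref{thm:approxMD} yields the looser $\sigma^{-1}$.
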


\begin{remark}
The proof of \cref{thm:MDConvRate} depends only on the property that $\nabla \Psi^* = (\nabla \Psi)^{-1}$. Therefore, the inverse mirror map $(\nabla \Psi^*)$ as in \cref{eq:MDDef} can be replaced with $(\nabla \Psi)^{-1}$, yielding a formulation of \ac{MD} that does not reference the convex conjugate $\Psi^*$ of the mirror potential $\Psi$ itself, but only the gradient $\nabla \Psi^*$.
\end{remark}
\sm{I am not sure why this section is needed. You are basically comparing GD and entropic MD for two objectives, namely least squares and KL divergence. There is no comparison with LMD here (except for Fig. 3 I guess, where you show LMD for least squares (unconstrained or on the simplex?). However, what I mean to say is that it is not possible for a reader, who is unfamiliar with the work, to follow the content/experiments reported here. Also, Theorem 4.1 need not be stated here.} \hy{moved theorem to inline.}


To motivate our goal of learning mirror maps, we will demonstrate an application of MD that drastically speeds up convergence over gradient descent. We consider optimization on the simplex $\Delta_d = \{x \in \R^d : x\ge 0,\ \sum_j x_j = 1\}$, equipped with a mirror potential given by the (negative log-) entropy map \cite{BECK2003167}. We have the following mirror maps, where logarithms and exponentials of vectors are to be taken component-wise:%

\begin{equation}\label{eq:entropicMD}
    \Psi(x) = \sum_j x_j \log x_j,\ \nabla \Psi(x) = 1+\log(x),\ \nabla \Psi^*(y) = \frac{\exp(y)}{\sum_j \exp(y_j)}.
\end{equation}%
\sm{Write somewhere in the `notations' paragraph that a function with a vector argument indicates applying the function component-wise, unless mentioned otherwise.} \hy{fixed}
This results in the \emph{entropic mirror descent} algorithm. It can be shown to have similar convergence rates as projected subgradient descent, with a $O(1/\sqrt{k})$ convergence rate \cite[Thm 5.1]{BECK2003167}. Given that the optimization is over a probability simplex, a natural problem class to consider is a probabilistic distance between points, given by the KL divergence. 

Minimizing the KL divergence is a convex problem on the simplex $x \in \Delta_d$. For a point $y \in \Delta_d$, the KL divergence is given as follows, where $0\log 0$ is taken to be 0 by convention:%
\begin{equation}
    \min_{x \in \Delta_d} KL(x\|y) = \sum_{i=1}^d x_i\log\left(\frac{x_i}{y_i}\right).
\end{equation}%
To demonstrate the potential of MD, we can apply the entropic MD algorithm to the problem classes of minimizing KL divergence and of minimizing least squares loss over the simplex $\Delta_d$. The function classes that we apply the entropic MD algorithm and gradient descent to are:
\[\mathcal{F}_{KL} = \left\{KL(\cdot\|y) : y \in \Delta_d\right\},\quad \mathcal{F}_{lsq} = \left\{\|\cdot - y\|_2^2 : y \in \Delta_d\right\},\]
where the functions have domain $\Delta_d$. Note that the true minimizers of a function in either of these function classes is given by the parameter $y \in \Delta_d$. 

To compare these two optimization algorithms, we optimize 500 functions from the respective function classes, which were generated by uniformly sampling $y$ on the simplex. \Cref{fig:simplex_comb} plots the evolution of the loss for the entropic MD algorithm and gradient descent for these two problem classes, applied with various step-sizes. The entropic MD algorithm gives linear convergence on the KL function class $\mathcal{F}_{KL}$, massively outperforming the gradient descent algorithm. However, entropic MD is unable to maintain this convergence rate over the least-squares function class $\mathcal{F}_{lsq}$. The difference in convergence rate demonstrates the importance of choosing a suitable mirror map for the target function class, as well as the potential of MD in accelerating convergence. This relationship between the function class and mirror maps motivates a learned approach to deriving mirror maps from data to replace classical hand-crafted mirror maps.

\begin{figure}
    \centering
    \includegraphics[width=\textwidth]{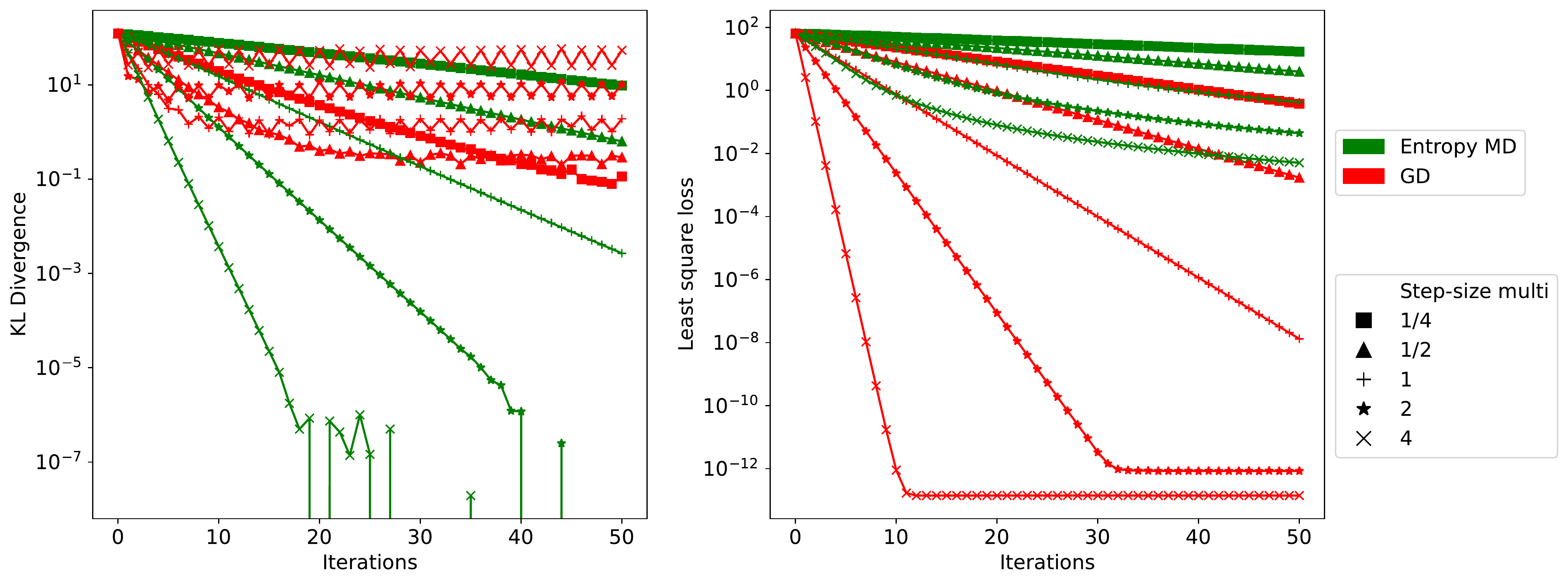}
    \caption{Effect of using the entropic MD method \eqref{eq:entropicMD} to minimize KL divergence (left) and least squares loss (right). The step-sizes were taken as $0.1\times$step-size multi. We can see that entropic MD (green) outperforms the gradient descent method for the KL divergence task, though loses out in the least squares task. The unstable iterations at low KL divergence are due to machine precision. The difference between the two optimization methods on each problem class demonstrates the potential of adapting to the optimization geometry using MD.}
    \label{fig:simplex_comb}
\end{figure}

\section{Main Results}\label{sec:main}
We first theoretically show convergence properties of mirror descent when the mirror map constraint $\nabla \Psi^* = (\nabla \Psi)^{-1}$ is only approximately satisfied. Motivated by these convergence properties, we propose our Learned Mirror Descent method, trained with a loss function balancing empirical convergence speed and theoretical convergence guarantees.

We briefly explain our key objective of approximate mirror descent. Recall that \ac{MD} as given in \eqref{eq:MDDef} requires two mirror maps, $\nabla \Psi$ and $\nabla \Psi^*$. We wish to parameterize both $\Psi$ and $\Psi^*$ using neural networks $M_\theta$ and $M_\vartheta^*$, and weakly enforce the constraint that $\nabla M_\vartheta^* = (\nabla M_\theta)^{-1}$. To maintain the convergence guarantees of \ac{MD}, we will derive a bound on the regret depending on the deviation between $\nabla M_\vartheta^*$ and $(\nabla M_\theta)^{-1}$ in a sense that will be made precise later. We will call the inconsistency between the parameterized mirror maps $\nabla M_\vartheta^*$ and $(\nabla M_\theta)^{-1}$ the \emph{forward-backward inconsistency/loss}.

Recall the problem setting as in Section \ref{sec:background}. Let $\Psi$ be a mirror potential, i.e. a $C^1$ $\sigma$-strongly-convex function with $\sigma>0$. In this section, we shall work in the unconstrained case $\mathcal{X} = \R^n$. We further assume $f$ has a minimizer $x^* \in \mathcal{X}$.

Recall the \ac{MD} iteration \eqref{eq:MDDef} with step sizes $\{t_k\}_{k=1}^\infty$ as follows. Throughout this section, $B = B_{\Psi}$ is the Bregman distance with respect to $\Psi$, and $\Psi^*$ is the convex conjugate of $\Psi$:
\begin{equation}\label{eq:MDargmin}
x_{k+1} = \arg \min_{x \in \mathcal{X}}\left\{\langle x, t_k \nabla f(x_k) \rangle + B(x, x_k)\right\} = \nabla \Psi^* (\nabla\Psi(x_k) - t_k \nabla f(x_k)).
\end{equation}%

For a general mirror map $\Psi$, the convex conjugate $\Psi^*$ and the associated backward mirror map $\nabla \Psi^*$ may not have a closed form. Suppose now that we parameterize $\Psi$ and $\Psi^*$ with neural networks $M_\theta$ and $M_\vartheta^*$ respectively, satisfying $\nabla M_\vartheta^* \approx (\nabla M_\theta)^{-1}$. The resulting \emph{approximate mirror descent} scheme is as follows, starting from $\tilde{x}_1 = x_1$:
\begin{equation}\label{eq:approximateMD}
        \tilde{x}_{k+1} = \nabla M_\vartheta^* (\nabla M_\theta(\tilde{x}_k) - t_k \nabla f(\tilde{x}_k)),\ k=1,2,\cdots.
\end{equation}
Here, we enforce that the sequence $\{\tilde{x}_{k}\}$ represents an approximation of a mirror descent iteration at each step, given by 
\begin{equation}\label{eq:trueMDiterates}
    x_{k+1} = \arg \min_{x \in \mathcal{X}}\left\{\langle x, t_k \nabla f(\tilde{x}_k) \rangle + B(x, \tilde{x}_k)\right\} = (\nabla M_\theta)^{-1} (\nabla M_\theta(\tilde{x}_k) - t_k \nabla f(\tilde{x}_k)).
\end{equation}

%

Hereafter, we will refer to $M_\theta$ and $M_\vartheta^*$ as the \emph{forward and backward (mirror) potentials}, respectively, and the corresponding gradients as the \emph{forward and backward (mirror) maps}. For practical purposes, $\{\tilde{x}_k\}$ should be considered as the iterations that we can compute. Typically, both the argmin and $\nabla \Psi^*$ are not easily computable, hence $x_k$ will not be computable either. However, defining this quantity will prove useful for our analysis, as we can additionally use this quantity to compare how close the forward and backward maps are from being inverses of each other.



The following theorem puts a convergence rate bound on the approximate MD scheme \cref{eq:approximateMD} in terms of the forward-backward inconsistency. More precisely, the inconsistency is quantified by the difference of the iterates in the dual space. This will allow us to show approximate convergence when the inverse mirror map is not known exactly.

\begin{theorem}[Regret Bound for Approximate MD]\label{thm:approxMD}
Suppose $f$ is $\mu$-strongly convex with parameter $\mu>0$, and $\Psi$ is a mirror potential with strong convexity parameter $\sigma$. Let $\{\tilde{x}_k\}_{k=0}^\infty$ be some sequence in $\mathcal{X} = \R^n$, and $\{x_k\}_{k=1}^\infty$ be the corresponding exact MD iterates generated by \cref{eq:trueMDiterates}. We have the following regret bound:
\begin{equation}
    \begin{split}
        &\sum_{k=1}^K t_k(f(\tilde{x}_k) - f(x^*))  \\
        &\le B(x^*, \tilde{x}_1) + \sum_{k=1}^K \left[\frac{1}{\sigma} t_k^2 \|\nabla f(\tilde{x}_k)\|_*^2 + \left(\frac{1}{2t_k\mu} + \frac{1}{\sigma}\right)\|\nabla \Psi(\tilde{x}_{k+1}) - \nabla \Psi(x_{k+1})\|_*^2 \right].
    \end{split}
\end{equation}
\end{theorem}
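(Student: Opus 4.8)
\emph{The plan is to} retrace the proof of the classical regret bound \cref{thm:MDConvRate}, but to carefully track the discrepancy caused by the fact that the computable iterate $\tilde{x}_{k+1}$ differs from the \emph{exact} mirror step $x_{k+1}$ generated by \eqref{eq:trueMDiterates}. Three ingredients will drive the argument. First, the three-point identity for the Bregman distance, $\langle \nabla\Psi(y) - \nabla\Psi(z),\, x - y\rangle = B(x,z) - B(x,y) - B(y,z)$, which converts inner products of mirror-map differences into (almost) telescoping Bregman terms. Second, the duality relation $B_\Psi(a,b) = B_{\Psi^*}(\nabla\Psi(b),\nabla\Psi(a))$ together with the fact that $\sigma$-strong convexity of $\Psi$ makes $\Psi^*$ a $(1/\sigma)$-smooth function, so that any primal Bregman term is controlled by $\tfrac{1}{2\sigma}\|\cdot\|_*^2$ of the corresponding dual displacement. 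Third, the $\mu$-strong convexity of $f$, which will be the device used to tame the only genuinely new term.

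First I would establish a one-step inequality. Strong convexity of $f$ gives $t_k(f(\tilde{x}_k) - f(x^*)) \le t_k\langle \nabla f(\tilde{x}_k), \tilde{x}_k - x^*\rangle - \tfrac{t_k\mu}{2}\|\tilde{x}_k - x^*\|^2$. Because $x_{k+1}$ is an exact mirror step from $\tilde{x}_k$, the dual relation $\nabla\Psi(x_{k+1}) = \nabla\Psi(\tilde{x}_k) - t_k\nabla f(\tilde{x}_k)$ holds, so $t_k\nabla f(\tilde{x}_k) = \nabla\Psi(\tilde{x}_k) - \nabla\Psi(x_{k+1})$. Substituting this and applying the three-point identity with $(x,y,z) = (x^*,\tilde{x}_k,x_{k+1})$ turns the inner product into $B(x^*,\tilde{x}_k) - B(x^*,x_{k+1}) + B(\tilde{x}_k,x_{k+1})$. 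The residual $B(\tilde{x}_k,x_{k+1})$ is handled by passing to the dual: it equals $B_{\Psi^*}(\nabla\Psi(x_{k+1}),\nabla\Psi(\tilde{x}_k))$, which by $(1/\sigma)$-smoothness of $\Psi^*$ and the dual relation is at most a multiple of $t_k^2\|\nabla f(\tilde{x}_k)\|_*^2$, producing the gradient contribution with the stated constant (the factor being an artifact of bounding through Young's inequality rather than the sharpest monotonicity estimate).

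The crux is that summing these one-step inequalities no longer telescopes: the term $-B(x^*,x_{k+1})$ from step $k$ does not cancel the term $B(x^*,\tilde{x}_{k+1})$ opening step $k+1$, and their mismatch is exactly where the forward--backward inconsistency $d_{k+1} := \nabla\Psi(\tilde{x}_{k+1}) - \nabla\Psi(x_{k+1})$ enters. I would insert the exact correction $-B(x^*,x_{k+1}) = -B(x^*,\tilde{x}_{k+1}) + \big(B(x^*,\tilde{x}_{k+1}) - B(x^*,x_{k+1})\big)$ and expand the bracket in dual coordinates; using $\nabla\Psi^*(\nabla\Psi(x^*)) = x^*$ it becomes $B_{\Psi^*}(\nabla\Psi(\tilde{x}_{k+1}),\nabla\Psi(x_{k+1})) + \langle d_{k+1}, x_{k+1} - x^*\rangle$, and conjugate smoothness bounds the first summand by $\tfrac{1}{2\sigma}\|d_{k+1}\|_*^2$. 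The second summand is the only genuinely new term and is \emph{the main obstacle}, because the factor $\|x_{k+1} - x^*\|$ is not a priori controlled. I would estimate it by Cauchy--Schwarz followed by Young's inequality, $\langle d_{k+1}, x_{k+1}-x^*\rangle \le \tfrac{1}{2t_k\mu}\|d_{k+1}\|_*^2 + \tfrac{t_k\mu}{2}\|x_{k+1}-x^*\|^2$, choosing the Young weight so that the quadratic $\tfrac{t_k\mu}{2}\|x_{k+1}-x^*\|^2$ is absorbed by the negative distance term carried from the strong convexity of $f$; strong monotonicity of $\nabla\Psi$ lets me exchange $x_{k+1}$ for $\tilde{x}_{k+1}$ in that distance up to a sign-definite remainder, so the quadratic is matched to the correct iterate. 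Summing over $k$, the $B(x^*,\tilde{x}_k)$ terms telescope to $B(x^*,\tilde{x}_1)$ after dropping the non-negative endpoint, the conjugate-smoothness estimates accumulate into the $\tfrac{1}{\sigma}$ coefficient, and the Young step yields the $\tfrac{1}{2t_k\mu}$ coefficient, giving the stated regret bound. The most delicate bookkeeping is keeping the step-size index on the cross term aligned with the strong-convexity term it is paired against, and controlling the boundary contribution at $k=K+1$.
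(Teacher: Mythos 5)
Your overall skeleton is the same amortized-telescoping argument the paper uses: bound the per-step quantity $t_k(f(\tilde{x}_k)-f(x^*)) + B(x^*,\tilde{x}_{k+1}) - B(x^*,\tilde{x}_k)$, isolate the forward--backward inconsistency $d_{k+1} = \nabla\Psi(\tilde{x}_{k+1})-\nabla\Psi(x_{k+1})$, and use the strong convexity of $f$ to absorb the Young quadratic it generates. Your routing through the three-point identity and the duality $B_\Psi(a,b)=B_{\Psi^*}(\nabla\Psi(b),\nabla\Psi(a))$ with $(1/\sigma)$-smoothness of $\Psi^*$ is a legitimate substitute for the paper's direct expansion of $B(x^*,\tilde{x}_{k+1})-B(x^*,\tilde{x}_k)$, and your identity $B(x^*,\tilde{x}_{k+1})-B(x^*,x_{k+1}) = B_\Psi(x_{k+1},\tilde{x}_{k+1}) + \langle d_{k+1}, x_{k+1}-x^*\rangle$ checks out.

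The genuine gap is in the absorption of the cross term $\langle d_{k+1}, x_{k+1}-x^*\rangle$. Your Young step produces the quadratic $\frac{t_k\mu}{2}\|x_{k+1}-x^*\|^2$, but the only negative quadratic available at step $k$ from the strong convexity of $f$ is $-\frac{t_k\mu}{2}\|\tilde{x}_k-x^*\|^2$, anchored at $\tilde{x}_k$, not at $x_{k+1}$. Your proposed repair --- exchange $x_{k+1}$ for $\tilde{x}_{k+1}$ via strong monotonicity of $\nabla\Psi$ --- moves the quadratic to yet another point and forces you to borrow the negative term from step $k+1$, which carries the wrong step size ($t_{k+1}$ instead of $t_k$, so the $\frac{1}{2t_k\mu}$ coefficient does not come out) and leaves an unabsorbed boundary quadratic at $k=K$; and any exchange of base points via $\|a\|^2 \le 2\|b\|^2 + 2\|a-b\|^2$ costs a factor of $2$ that destroys the exact cancellation. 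The fix, which is precisely what the paper's proof does, is to split the primal factor through the \emph{current} approximate iterate: write $x_{k+1}-x^* = (x_{k+1}-\tilde{x}_k) + (\tilde{x}_k - x^*)$ (the paper equivalently uses $x^*-\tilde{x}_{k+1} = (x^*-\tilde{x}_k)+(\tilde{x}_k-\tilde{x}_{k+1})$). The piece against $\tilde{x}_k - x^*$ then Youngs to $\frac{1}{2t_k\mu}\|d_{k+1}\|_*^2 + \frac{t_k\mu}{2}\|\tilde{x}_k-x^*\|^2$ and cancels identically within step $k$, while the piece against $x_{k+1}-\tilde{x}_k$ is controlled by $\frac{1}{2\sigma}\|d_{k+1}\|_*^2 + \frac{t_k^2}{2\sigma}\|\nabla f(\tilde{x}_k)\|_*^2$ since $\|x_{k+1}-\tilde{x}_k\| \le \frac{t_k}{\sigma}\|\nabla f(\tilde{x}_k)\|_*$; combined with your $\frac{1}{2\sigma}$ bounds on the two conjugate Bregman terms this reproduces exactly the stated coefficients $\frac{1}{\sigma}t_k^2\|\nabla f(\tilde{x}_k)\|_*^2$ and $(\frac{1}{2t_k\mu}+\frac{1}{\sigma})\|d_{k+1}\|_*^2$. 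Without this redirection your argument does not close with the claimed constants.
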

\begin{proof}
We start by employing \textit{amortization} to find an upper bound on the following expression:
\begin{equation}\label{eq:amortization}
    t_k f(\tilde{x}_k) - t_k f(x^*) + (B(x^*, \tilde{x}_{k+1}) - B(x^*, \tilde{x}_{k})).
\end{equation}%
From the formulation \eqref{eq:approximateMD}, since $\nabla \Psi^* = (\nabla \Psi)^{-1}$:
\[\nabla \Psi(x_{k+1}) = \nabla\Psi(\tilde{x}_k) - t_k \nabla f(\tilde{x}_k).\]

We have the following bound on $B(x^*, \tilde{x}_{k+1}) - B(x^*, \tilde{x}_{k})$:

\begin{align*}
    &\quad B(x^*, \tilde{x}_{k+1}) - B(x^*, \tilde{x}_{k}) = \Psi(x^*) - \Psi(\tilde{x}_{k+1}) - \langle\nabla \Psi(\tilde{x}_{k+1}) ,x^* - \tilde{x}_{k+1}\rangle && \\
    &\qquad - \left[\Psi(x^*) - \Psi(\tilde{x}_{k}) - \langle\nabla \Psi(\tilde{x}_{k}) ,x^* - \tilde{x}_{k}\rangle\right] &&[\text{definition of }B] \\
    &= \Psi(\tilde{x}_{k}) - \Psi(\tilde{x}_{k+1}) - \langle\nabla \Psi(\tilde{x}_{k+1}) ,x^* - \tilde{x}_{k+1}\rangle + \langle\nabla \Psi(\tilde{x}_{k}) ,x^* - \tilde{x}_{k}\rangle &&\text{[cancel $\Psi(x^*)$]} \\
    &= \Psi(\tilde{x}_{k}) - \Psi(\tilde{x}_{k+1}) - \textcolor{blue}{\langle  \nabla\Psi (x_{k+1}), x^* - \tilde{x}_{k+1} \rangle} && \text{[add/subtract}\\
    &\qquad - \langle \nabla \Psi(\tilde{x}_{k+1}) - \textcolor{blue}{\nabla \Psi(x_{k+1})}, x^* - \tilde{x}_{k+1} \rangle + \langle \nabla \Psi(\tilde{x}_k), x^* - \tilde{x}_k\rangle && \text{terms in blue]}\\
    &= \Psi(\tilde{x}_{k}) - \Psi(\tilde{x}_{k+1}) - \langle \nabla \Psi(\tilde{x}_k) - t_k \nabla f(\tilde{x}_k), x^* - \tilde{x}_{k+1} \rangle && \text{[MD update \eqref{eq:trueMDiterates}}\\
    &\qquad - \langle \nabla \Psi(\tilde{x}_{k+1})  - \nabla \Psi(x_{k+1}), x^* - \tilde{x}_{k+1} \rangle + \langle \nabla \Psi(\tilde{x}_k), x^* - \tilde{x}_k\rangle && \text{ on } \nabla \Psi(x_{k+1})]\\
    &= \underbrace{\Psi(\tilde{x}_k) - \Psi(\tilde{x}_{k+1}) + \langle \nabla \Psi(\tilde{x}_k), \tilde{x}_{k+1} - \tilde{x}_k \rangle}_{-B_{\Psi}(\tilde{x}_{k+1}, \tilde{x}_k)} \\
    &\qquad + \langle t_k \nabla f(\tilde{x}_k),  x^* - \tilde{x}_{k+1} \rangle  - \langle \nabla \Psi(\tilde{x}_{k+1}) - \nabla \Psi(x_{k+1}), x^* - \tilde{x}_{k+1} \rangle.
\end{align*}
Observe that the first line in the final expression is precisely $-B_\Psi(\tilde{x}_{k+1}, \tilde{x}_{k})$. By $\sigma$-strong-convexity of $\Psi$, we have $-B_\Psi(\tilde{x}_{k+1}, \tilde{x}_{k}) \le -\frac{\sigma}{2} \|\tilde{x}_{k+1} - \tilde{x}_{k}\|^2$. Therefore, our final bound for this expression is:
\begin{equation}\label{eq:BregmanBound}
    \begin{split}
        &\quad B(x^*, \tilde{x}_{k+1}) - B(x^*, \tilde{x}_{k}) \\
        &\le -\frac{\sigma}{2} \|\tilde{x}_{k+1} - \tilde{x}_{k}\|^2 + {\langle t_k \nabla f(\tilde{x}_k),  x^* - \tilde{x}_{k+1} \rangle} - \langle \nabla \Psi(\tilde{x}_{k+1}) - \nabla \Psi(x_{k+1}), x^* - \tilde{x}_{k+1} \rangle.
    \end{split}
\end{equation}%
Returning to bounding the initial expression \eqref{eq:amortization}, we have by substituting \eqref{eq:BregmanBound}:

\begin{align*}
    &\quad t_k f(\tilde{x}_k) - t_k f(x^*) + (B(x^*, \tilde{x}_{k+1}) - B(x^*, \tilde{x}_{k})) \\
    &\le t_k f(\tilde{x}_k) - t_k f(x^*) + {\langle t_k \nabla f(\tilde{x}_k), x^* - \tilde{x}_{k+1}\rangle} \\
    & \qquad - \frac{\sigma}{2}\|\tilde{x}_{k+1} - \tilde{x}_{k}\|^2 - \langle \nabla \Psi(\tilde{x}_{k+1}) - \nabla \Psi(x_{k+1}), x^* - \tilde{x}_{k+1} \rangle && \text{[by \eqref{eq:BregmanBound}]} \\
    &= t_k f(\tilde{x}_k) - t_k f(x^*) + {\langle t_k \nabla f(\tilde{x}_k), x^* - \textcolor{blue}{\tilde{x}_k\rangle}}  + {\langle t_k \nabla f(\tilde{x}_k) , \textcolor{blue}{\tilde{x}_k} - \tilde {x}_{k+1} \rangle} && [\text{add/subtract}\\
    & \qquad - \frac{\sigma}{2}\|\tilde{x}_{k+1} - \tilde{x}_{k}\|^2 - \langle \nabla \Psi(\tilde{x}_{k+1}) - \nabla \Psi(x_{k+1}), x^* - \tilde{x}_{k+1} \rangle && \text{terms in blue}]\\
    &= -t_k B_f(x^*, \tilde{x}_k)  + \langle t_k \nabla f(\tilde{x}_k) , \tilde{x}_k - \tilde{x}_{k+1} \rangle\\
    & \quad - \frac{\sigma}{2}\|\tilde{x}_{k+1} - \tilde{x}_{k}\|^2 - \langle \nabla \Psi(\tilde{x}_{k+1}) - \nabla \Psi(x_{k+1}), x^* - \textcolor{blue}{\tilde{x}_{k}} \rangle \\
    & \quad - \langle \nabla \Psi(\tilde{x}_{k+1}) - \nabla \Psi(x_{k+1}), \textcolor{blue}{\tilde{x}_k} - \tilde{x}_{k+1} \rangle. 
\end{align*}
The above two equalities are obtained by writing the second term of the inner products as $x^* - \tilde{x}_{k+1} = (x^* - \tilde{x}_{k}) + (\tilde{x}_{k} - \tilde{x}_{k+1})$, and by the definition of $B_f$. By $\mu$-strong-convexity of $f$, we get $-t_k B_f(x^*, \tilde{x}_k) \le -\frac{t_k \mu}{2} \|x^* - \tilde{x}_k\|^2$. Therefore, the bound on the quantity in \eqref{eq:amortization} reduces to
\begin{equation}
\begin{split}
    &\quad t_k f(\tilde{x}_k) - t_k f(x^*) + (B(x^*, \tilde{x}_{k+1}) - B(x^*, \tilde{x}_{k})) \\
    &\le - \frac{t_k\mu}{2} \|x^*- \tilde{x}_k\|^2  + \langle t_k \nabla f(\tilde{x}_k) , \tilde{x}_k - \tilde{x}_{k+1} \rangle \\
    & \quad - \frac{\sigma}{2}\|\tilde{x}_{k+1} - \tilde{x}_{k}\|^2 - \langle \nabla \Psi(\tilde{x}_{k+1}) - \nabla \Psi(x_{k+1}), x^* - \tilde{x}_{k} \rangle  \\ & \quad -\langle \nabla \Psi(\tilde{x}_{k+1}) - \nabla \Psi(x_{k+1}), \tilde{x}_k - \tilde{x}_{k+1} \rangle.
\end{split}
\end{equation}
Liberally applying Cauchy-Schwarz and Young's inequality to bound the inner product terms:
\begin{equation}
\begin{split}
    &\quad t_k f(\tilde{x}_k) - t_k f(x^*) + (B(x^*, \tilde{x}_{k+1}) - B(x^*, \tilde{x}_{k})) \\
    &\le - \frac{t_k\mu}{2} \|x^*- \tilde{x}_k\|^2 + \frac{1}{\sigma} t_k^2 \|\nabla f(\tilde{x}_k)\|_*^2 + \frac{\sigma}{4} \|\tilde{x}_k - \tilde{x}_{k+1}\|^2 \\
    & \quad - \frac{\sigma}{2}\|\tilde{x}_{k+1} - \tilde{x}_{k}\|^2 
    + \frac{1}{2 t_k \mu}\|\nabla \Psi(\tilde{x}_{k+1}) - \nabla \Psi(x_{k+1})\|_*^2 + \frac{t_k\mu}{2}\|x^* - \tilde{x}_{k}\|^2 \\
    & \quad + \frac{1}{\sigma}\|\nabla \Psi(\tilde{x}_{k+1}) - \nabla \Psi(x_{k+1})\|_*^2 + \frac{\sigma}{4}\| \tilde{x}_k - \tilde{x}_{k+1}\|^2 \\
    &\le \frac{1}{\sigma} t_k^2 \|\nabla f(\tilde{x}_k)\|_*^2 + \left(\frac{1}{2t_k\mu} + \frac{1}{\sigma}\right)\|\nabla \Psi(\tilde{x}_{k+1}) - \nabla \Psi(x_{k+1})\|_*^2.
\end{split}
\end{equation}
Summing from $k=1$ to $K$, we get
\begin{equation}\label{eq:ApproxMDRegret}
    \begin{split}
        &\quad \sum_{k=1}^K \left[t_k f(\tilde{x}_k) - t_k f(x^*) + (B(x^*, \tilde{x}_{k+1}) - B(x^*, \tilde{x}_{k}))\right] \\
        & \le \sum_{k=1}^K \left[ \frac{1}{\sigma} t_k^2 \|\nabla f(\tilde{x}_k)\|_*^2 + \left(\frac{1}{2t_k\mu} + \frac{1}{\sigma}\right)\|\nabla \Psi(\tilde{x}_{k+1}) - \nabla \Psi(x_{k+1})\|_*^2\right].
    \end{split}
\end{equation}
Observe $\sum_{k=1}^K (B(x^*, \tilde{x}_{k+1}) - B(x^*, \tilde{x}_{k})) = B(x^*, \tilde{x}_{K+1}) - B(x^*, \tilde{x}_{1}) \ge - B(x^*, \tilde{x}_{1})$. Apply this with \cref{eq:ApproxMDRegret} to finish the regret bound.
\end{proof}
\begin{remark}
This bound may be extended to the constrained case $\mathcal{X} \subsetneq \R^n$. This can be shown by adding an extra projection step to the iterates of the form $\pi(y) = \argmin_{x \in \mathcal{X}} B(x,y)$, and having $\tilde{x}_{k+1}$ instead approximate the projection of the exact mirror step $\tilde{x}_{k+1} \approx \pi(x_{k+1})$ in \eqref{eq:approximateMD} \cite{nemirovsky1983MD}. Note that if $y \notin \mathcal{X}$, then $B(x^*,\pi(y)) \le B(x^*, y)$ for any $x^* \in \mathcal{X}$.
\end{remark}
\begin{remark}
The convex function $f$ need not be differentiable, and having a non-empty subgradient at every point is sufficient for the regret bound to hold. The proof will still work if $\nabla f$ is replaced by a subgradient $f' \in \partial f$.
\end{remark}
\begin{remark}\label{rem:smallSS}
Observe there is a $t_k^{-1}$ coefficient in the approximation term. This prevents us from taking $t_k \searrow 0$ to get convergence as in the classical MD case. Intuitively, a sufficiently large gradient step is required to correct for the approximation. However, due to the Lipschitz condition on the objective $f$, the gradient step is still required to be limited above for convergence. \sm{Maybe mention that there is a trade-off here.} \hy{added}
\end{remark}

With \cref{thm:approxMD}, we no longer require precise knowledge of the convex conjugate. In particular, this allows us to parameterize the forward mirror potential with an \ac{ICNN}, for which there is no closed-form convex conjugate in general. We are thus able to approximate the backwards mirror potential with another neural network, while maintaining approximate convergence guarantees. While the true backward potential will be convex, these results allow us to use a non-convex network, resulting in better numerical performance. 
\sm{You might want to write it differently. The optimal backward map will always have a convex potential. The advantage of using a generic neural net modeling is rather empirical. Say something like `this was found to be better numerically'.} \hy{reworded}

\subsection{Relative Smoothness Assumption}
We have seen that we can approximate the iterations of \ac{MD} and still obtain convergence guarantees. With the slightly weaker assumption of relative smoothness and relative strong convexity, \ac{MD} can be shown to converge \cite{lu2018relreg}. We can get a similar and cleaner bound by slightly modifying the proof of convergence for classical \ac{MD} under these new assumptions. 

\begin{definition}[Relative Smoothness/Convexity]
Let $\Psi:\mathcal{X} \rightarrow \R$ be a differentiable convex function, defined on a convex set $\mathcal{X}$ (with non-empty interior), which will be used as a reference. Let $f:\mathcal{X} \rightarrow \R$ be another differentiable convex function.

$f$ is \emph{$L$-smooth relative to $\Psi$} if for any $x,y \in \text{int}(\mathcal{X})$,
\begin{equation}
    f(y) \le f(x) + \langle \nabla f(x), y-x \rangle + L B_\Psi(y,x).
\end{equation}

$f$ is \emph{$\mu$-strongly-convex relative to $\Psi$} if for any $x,y \in \text{int}(\mathcal{X})$,
\begin{equation}
    f(y) \ge f(x) + \langle \nabla f(x), y-x \rangle + \mu B_\Psi(y,x).
\end{equation}
\end{definition}

Observe that these definitions of relative smoothness and relative strong convexity extend the usual notions of $L$-smoothness and strong convexity with the Euclidean norm by taking $\Psi= \frac{1}{2}\|\cdot\|_2^2$, recovering $B_\Psi(x,y) = \frac{1}{2}\|x-y\|_2^2$. Moreover, if $\nabla f$ is $L$-Lipschitz and $\Psi$ is $\mu$-strongly convex with $\mu>0$, then $f$ is $L/\mu$ smooth relative to $\Psi$. If both functions are twice-differentiable, the above definitions are equivalent to the following \cite[Prop 1.1]{lu2018relreg}:
\begin{equation}
    \mu \nabla^2 \Psi \preceq \nabla^2 f \preceq L\nabla^2 \Psi.
\end{equation}

Using the relative smoothness and relative strong convexity conditions, we can show convergence even when the convex objective function $f$ is flat, as long as our mirror potential $\Psi$ is also flat at those points. The analysis given in \cite{lu2018relreg} readily extends to the case where our iterations are approximate.

\begin{theorem}\label{thm:relativeMD}
Let $f$ be relatively $L$-smooth and relatively $\mu$-strongly-convex with respect to the mirror map $\Psi$, with $L>0,\, \mu \ge 0$. Let $\{\tilde{x}_k\}_{k\ge 0}$ be a sequence in $\mathcal{X}$, and consider the iterations $\{x_k\}_{k\ge 1}$ defined as%
\sm{The theorem extends convergence results of MD for relatively smooth functions, but how do we know that $f$ is relatively smooth wrt $\Psi$? Given an ICNN $\Psi$, this isn't verifiable, right?}\hy{in practice, relative smoothness can be derived from lipschitz constant of f and strong convexity of mu: if $f$ is $L_f$ lipschitz and $\Psi$ is $\mu$-strongly convex then $f$ is $L_f/\mu$ smooth wrt $\Psi$}
\begin{equation}
        x_{k+1} = \arg \min_{x \in \mathcal{X}}\left\{\langle x, \nabla f(\tilde{x}_k) \rangle + L B(x, \tilde{x}_k)\right\},
\end{equation}
i.e. the result of applying a single MD update step with fixed step size $1/L$ to each $\tilde{x}_k$. We have the following bound (for any $x \in \mathcal{X}$), where the middle expression is discarded if $\mu=0$:

\begin{equation}\label{eq:relativeSmoothMDBound}
    \min_{1\le i \le k} f(\tilde{x}_i) - f(x) \le \frac{\mu B(x, \tilde{x}_0)}{(1+\frac{\mu}{L-\mu})^k -1} + M_k \le \frac{L-\mu}{k}B(x, \tilde{x}_0) + M_k,
\end{equation}
where 
\begin{equation}
    M_k = \frac{\sum_{i=1}^k (\frac{L}{L-\mu})^i [L \langle \nabla \Psi(x_i) - \nabla \Psi(\tilde{x}_i), x-\tilde{x}_i \rangle + \langle \nabla f(x_i), \tilde{x}_i - x_i \rangle]}{\sum_{i=1}^k (\frac{L}{L-\mu})^i}.
\end{equation}
In particular, if $L \langle \nabla \Psi(x_i) - \nabla \Psi(\tilde{x}_i), x-\tilde{x}_i \rangle + \langle \nabla f(x_i), \tilde{x}_i - x_i \rangle$ is uniformly bounded (from above) by $M$, we can replace $M_k$ by $M$ in \eqref{eq:relativeSmoothMDBound}. 
\end{theorem}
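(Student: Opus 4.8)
The plan is to follow the convergence proof of mirror descent under relative smoothness from \cite{lu2018relreg}, inserting the approximation error exactly where the computable iterate $\tilde{x}_k$ stands in for the exact Bregman-proximal step $x_k$. Throughout I abbreviate $D_j = B(x, \tilde{x}_j)$ and $\delta_j = f(\tilde{x}_j) - f(x)$, where $x \in \mathcal{X}$ is the arbitrary comparison point.

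First I would establish a one-step recursion for the exact step. Since $x_k$ minimizes $\langle \nabla f(\tilde{x}_{k-1}), \cdot \rangle + L B(\cdot, \tilde{x}_{k-1})$ over $\mathcal{X}$, its optimality condition yields the three-point inequality $\langle \nabla f(\tilde{x}_{k-1}), x_k - x \rangle \le L B(x, \tilde{x}_{k-1}) - L B(x, x_k) - L B(x_k, \tilde{x}_{k-1})$ for every $x \in \mathcal{X}$ (an equality in the unconstrained case, via $\nabla f(\tilde{x}_{k-1}) + L(\nabla \Psi(x_k) - \nabla \Psi(\tilde{x}_{k-1})) = 0$). Combining this with relative $L$-smoothness applied to the pair $(\tilde{x}_{k-1}, x_k)$ and relative $\mu$-strong-convexity applied to $(\tilde{x}_{k-1}, x)$, the inner-product and the $B(x_k, \tilde{x}_{k-1})$ terms cancel, leaving the exact-step bound $f(x_k) - f(x) \le (L-\mu) D_{k-1} - L B(x, x_k)$.

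Next I would convert everything from $x_k$ to the computable iterate $\tilde{x}_k$. On the right I use the three-point identity $B(x, x_k) = B(x, \tilde{x}_k) + B(\tilde{x}_k, x_k) + \langle \nabla \Psi(\tilde{x}_k) - \nabla \Psi(x_k), x - \tilde{x}_k \rangle$, and on the left I bound $f(\tilde{x}_k) - f(x_k)$ from above by relative smoothness on the pair $(x_k, \tilde{x}_k)$, which produces $\langle \nabla f(x_k), \tilde{x}_k - x_k \rangle + L B(\tilde{x}_k, x_k)$. The crux of the argument — and the step I expect to be the main obstacle — is that the $+L B(\tilde{x}_k, x_k)$ arising on the left exactly cancels the $-L B(\tilde{x}_k, x_k)$ arising in the right-hand decomposition, so the net error collapses to the single term $\epsilon_k := L\langle \nabla \Psi(x_k) - \nabla \Psi(\tilde{x}_k), x - \tilde{x}_k \rangle + \langle \nabla f(x_k), \tilde{x}_k - x_k \rangle$, which is precisely the numerator summand of $M_k$. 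Getting the two decompositions aligned so that no residual Bregman terms survive is what makes the clean recursion $\delta_k \le (L-\mu) D_{k-1} - L D_k + \epsilon_k$ possible.

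Finally I would telescope. Multiplying the recursion at index $i$ by $\theta_i = (L/(L-\mu))^i$ turns the coefficient relation $\theta_i(L-\mu) = \theta_{i-1} L$ into a telescoping sum; summing over $i = 1, \dots, k$ and discarding the nonnegative $\theta_k L D_k$ term gives $\sum_{i=1}^k \theta_i \delta_i \le L B(x, \tilde{x}_0) + \sum_{i=1}^k \theta_i \epsilon_i$. Bounding $\delta_i \ge \min_{1\le i \le k}\delta_i$ on the left and dividing by $\sum_i \theta_i$ yields $M_k$ as the weighted error average and $L B(x,\tilde{x}_0)/\sum_i\theta_i$ as the leading term; evaluating the geometric series $\sum_{i=1}^k (L/(L-\mu))^i = \frac{L}{\mu}[(1+\frac{\mu}{L-\mu})^k - 1]$ recovers the stated rate $\mu B(x,\tilde{x}_0)/((1+\mu/(L-\mu))^k - 1)$. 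The simplified bound follows from Bernoulli's inequality $(1+t)^k \ge 1 + kt$ with $t = \mu/(L-\mu)$, and the degenerate case $\mu = 0$ is handled by taking $\theta_i \equiv 1$, so the geometric series becomes $\sum_i \theta_i = k$ and the bound reads $\frac{L}{k}B(x, \tilde{x}_0) + M_k$, which is why the middle expression is dropped. The last claim is immediate: if each bracketed quantity is at most $M$, then $M_k$ is a weighted average of terms $\le M$ and hence $M_k \le M$.
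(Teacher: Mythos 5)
Your proposal is correct and follows essentially the same route as the paper: both adapt the relative-smoothness argument of Lu et al., derive the one-step bound $f(x_i) - f(x) \le (L-\mu)B(x,\tilde{x}_{i-1}) - LB(x,x_i)$ via the three-point property, transfer from $x_i$ to $\tilde{x}_i$ using the Bregman three-point identity together with relative smoothness on the pair $(x_i,\tilde{x}_i)$ so that the $LB(\tilde{x}_i,x_i)$ terms cancel and only $\epsilon_i$ survives, and then telescope with weights $(L/(L-\mu))^i$. The only cosmetic difference is that the paper bounds $-LB(\tilde{x}_i,x_i)\le -B_f(\tilde{x}_i,x_i)$ and simplifies, whereas you apply relative smoothness to $f(\tilde{x}_i)-f(x_i)$ directly; these are the same inequality rearranged.
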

\begin{proof}
We follow the proof of \cite[Thm 3.1]{lu2018relreg} very closely. We state first the three-point property (\cite[Lemma 3.1]{lu2018relreg}, \cite{tseng08}). 
\begin{lemma}[Three-point property]
Let $\phi(x)$ be a proper l.s.c. convex function. If 
\[z_+ = \argmin_{x} \{\phi(x) + B(x, z)\},\]
then 
\[\phi(x) + B(x,z) \ge \phi(z_+) + B(z_+, z) + B(x, z_+),\quad \text{for all } x \in \mathcal{X}.\]
\end{lemma}
As in \cite[Eq 28]{lu2018relreg}, we have for any $x \in \mathcal{X}$ and $i\ge 1$,
\begin{equation}
    \begin{split}
        f(x_i) &\le f(\tilde{x}_{i-1}) + \langle \nabla f(\tilde{x}_{i-1}), x_i - \tilde{x}_{i-1})\rangle + L B(x_i, \tilde{x}_{i-1})\\
        & \le f(\tilde{x}_{i-1}) + \langle \nabla f(\tilde{x}_{i-1}), x - \tilde{x}_{i-1})\rangle + L B(x, \tilde{x}_{i-1}) - L B(x, x_i) \\
        & \le f(x) + (L-\mu) B(x, \tilde{x}_{i-1}) - L B(x, x_i).
    \end{split}
\end{equation}
The first inequality follows from $L$-smoothness relative to $\Psi$, the second inequality from the three-point property applied to $\phi(x) = \frac{1}{L} \langle \nabla f(\tilde{x}_{i-1}), x-\tilde{x}_{i-1}\rangle$ and $z=\tilde{x}_{i-1}, z_+ = x_i$, and the last inequality from $\mu$-strong-convexity of $f$ relative to $\Psi$. We thus have
\begin{equation}
    \begin{split}
        f(\tilde{x}_i) &= f(x_i) + f(\tilde{x}_i) - f(x_i) \\
        &\le f(x) + (L-\mu) B(x, \tilde{x}_{i-1}) - LB(x, x_i) + f(\tilde{x}_i) - f(x_i) \\
        &= (L-\mu) B(x, \tilde{x}_{i-1}) - LB(x, \tilde{x}_i) \\
        &\quad + [f(x) + LB(x, \tilde{x}_i) - LB(x, x_i) + f(\tilde{x}_i) - f(x_i)].
    \end{split}
\end{equation}

By induction/telescoping, we get:
\begin{equation}
\begin{split}
        \sum_{i=1}^k \left(\frac{L}{L-\mu}\right)^i f(\tilde{x}_i) &\le LB(x, \tilde{x}_0) + \sum_{i=1}^k \left(\frac{L}{L-\mu}\right)^i f(x)\\
        &\quad + \sum_{i=1}^k \left(\frac{L}{L-\mu}\right)^i[L(B(x, \tilde{x}_i) -B(x, x_i)) + f(\tilde{x}_i) - f(x_i)].
\end{split}
\end{equation}
The final ``approximation error" term is
\begin{equation}
    \begin{split}
        &L(B(x, \tilde{x}_i) - B(x, x_i)) + f(\tilde{x}_i) - f(x_i)\\
        =&\ L \langle \nabla \Psi(x_i) - \nabla \Psi(\tilde{x}_i), x-\tilde{x}_i \rangle - LB(\tilde{x_i}, x_i) + f(\tilde{x}_i) - f(x_i)\\
        \le&\ L \langle \nabla \Psi(x_i) - \nabla \Psi(\tilde{x}_i), x-\tilde{x}_i \rangle - B_f(\tilde{x_i}, x_i) + f(\tilde{x}_i) - f(x_i) \\
        =&\ L \langle \nabla \Psi(x_i) - \nabla \Psi(\tilde{x}_i), x-\tilde{x}_i \rangle + \langle \nabla f(x_i), \tilde{x}_i - x_i \rangle,
    \end{split}
\end{equation}
where in the inequality, we use the definition of $L$-relative smoothness $B_f(x,y) \le LB_\Psi(x,y)$.
(Recall $B(c,a) + B(a,b) - B(c,b) = \langle \nabla \Psi(b) - \nabla \Psi(a), c-a \rangle$ \cite[Lemma 4.1]{BECK2003167}.)

Substituting $C_k$ defined by 
\[\sum_{i=1}^k \left(\frac{L}{L-\mu}\right)^i \eqqcolon \frac{1}{C_k}\]
and rearranging, we get
\begin{equation}
\begin{aligned}
    &\min_{1\le i\le k} f(\tilde{x}_i) - f(x) \le  C_k L B(x, \tilde{x}_0) \\
    & \qquad + C_k \sum_{i=1}^k \left(\frac{L}{L-\mu}\right)^i [L \langle \nabla \Psi(x_i) - \nabla \Psi(\tilde{x}_i), x-\tilde{x}_i \rangle + \langle \nabla f(x_i), \tilde{x}_i - x_i \rangle].
\end{aligned}
\end{equation}

In particular, if we have a uniform bound on $[L \langle \nabla \Psi(x_i) - \nabla \Psi(\tilde{x}_i), x-\tilde{x}_i \rangle + \langle \nabla f(x_i), \tilde{x}_i - x_i \rangle]$, say $M$, then we have 
\begin{equation}
    \min_{1\le i\le k} f(\tilde{x}_i) - f(x) \le C_k L B(x, \tilde{x}_0) + M.
\end{equation}

Finally, note that if $\mu=0$ then $C_k = 1/k$, and if $\mu > 0$ then
\[C_k = \left(\sum_{i=1}^k \left(\frac{L}{L-\mu}\right)^i\right)^{-1} = \frac{\mu}{L\left((1+\frac{\mu}{L-\mu})^k -1\right)}\le 1/k.\]

\end{proof}

\Cref{thm:relativeMD} gives us convergence rate bounds up to an additive approximation error $M_k$, depending on how far the approximate iterates $\tilde{x}_k$ are from the true MD iterates $x_k$. By taking $x$ in \cref{eq:relativeSmoothMDBound} to be an optimal point $x^*$ where $f$ attains its minimum, we can get approximate linear convergence and approximate $O(1/k)$ convergence if the relative strong convexity parameters satisfy $\mu>0$ and $\mu=0$ respectively. In particular, the quantity 
\begin{equation}\label{eq:relativeQuantity}
    L \langle \nabla \Psi(x_i) - \nabla \Psi(\tilde{x}_i), x-\tilde{x}_i \rangle + \langle \nabla f(x_i), \tilde{x}_i - x_i \rangle
\end{equation}
that we would like to bound gives an interpretation in terms of how the approximate iterates $\tilde{x}_i$ should be close to $x_i$. To minimize the first term, $\nabla \Psi(x_i) - \nabla \Psi(\tilde{x}_i)$ should be small, and $\tilde{x}_i - x_i$ should be small to minimize the second term. 

\subsection{Training Procedure}
In this section, we will outline our general training procedure and further detail our definitions for having faster convergence. We further propose a loss function to train the mirror potentials $M_\theta$ and $M_\vartheta^*$ to enforce both faster convergence, as well as forward-backward consistency in order to apply \Cref{thm:approxMD}.

Suppose we have a fixed function class $\mathcal{F}$ consisting of convex functions $f:\mathcal{X} \rightarrow \R$, where $\mathcal{X}\subseteq \R^d$ is some convex set that we wish to optimize over. Our goal is to efficiently minimize typical functions in $\mathcal{F}$ by using our learned mirror descent scheme. 

For a function $f \in \mathcal{F}$, suppose we have data initializations $x \in \mathcal{X}$ drawn from a data distribution $\mathbb{P}_{x|f}$, possibly depending on our function. Let $\{\tilde{x}_k\}_{k=1}^K$ be the sequence constructed by applying learned mirror descent with forward potential $M_\theta$ and backward potential $M_\vartheta^*$, with initialization $\tilde{x}_0=x$:
\begin{equation}
    \tilde{x}_{k+1} = \nabla M_\vartheta^* (\nabla M_\theta (\tilde{x}_k) - t_k \nabla f(\tilde{x}_k)).
\end{equation}%


To parameterize our mirror potentials $M_\theta, M_\vartheta^*:\R^d\rightarrow \R$, we use the architecture proposed by Amos et al. for an input convex neural network (ICNN) \cite{amos2017icnn}. The input convex neural networks are of the following form:
\begin{equation}
    z_{i+1} = \sigma \left(W_i^{(z)} z_i + W_i^{(x)}x + b_i\right),\quad M(x;\theta) = z_l,
\end{equation}
where $\sigma$ is the leaky-ReLU activation function, and $\theta = \{W_{0:l-1}^{(x)}, W_{1:l-1}^{(z)}, b_{0:l-1}\}$ are the parameters of the network. For the forward mirror potential $M_\theta$, we clip the weights such that $W_i^{(z)}$ are non-negative, so the network is convex in its input $x$ \cite[Prop 1]{amos2017icnn}. This can be done for both fully connected and convolutional layers. We note that it is not necessary for the backwards mirror potential $M_\vartheta^*$ to be convex, which allows for more expressivity. Using the ICNN architecture allows for guaranteed convex mirror potentials with minimal computational overhead. By adding an additional small quadratic term $\mu\|x\|^2$ to the ICNN, we are able to enforce strong convexity of the mirror map as well. 

We would like to enforce that $f(\tilde{x}_k)$ is minimized quickly \emph{on average}, over both the function class and the distribution of initializations $\tilde{x}_0=x$ corresponding to each individual $f$. One possible method is to consider the value of the loss function at or up to a particular iteration $\tilde{x}_N$ for fixed $N$. We also apply a soft penalty such that $\nabla M_\vartheta^* \approx (\nabla M_\theta)^{-1}$ in order to maintain reasonable convergence guarantees. The loss that we would hence like to optimize over the neural network parameter space $(\theta, \vartheta) \in \Theta$ is thus:
\begin{equation}\label{eq:MDTrainLoss}
    \argmin_{\theta,\vartheta}  \mathbb{E}_{f,x}[f(\tilde{x}_N)] + \mathbb{E}_{\mathcal{X}}[\|\nabla M_\vartheta^* \circ \nabla M_\theta - I\|].
\end{equation}
The expectations on the first term are taken over the function class, and further on the initialization distribution conditioned on our function instance. To empirically speed up training, we find it effective to track the loss at each stage, similar to Andrychowicz et al. \cite{andrychowicz2016GDbyGD}. Moreover, it is impractical to have a consistency loss for the entire space $\mathcal{X}$, so we instead limit it to around the samples that are attained. The loss functions that we use will be variants of the following:
\begin{subequations}
\begin{align}
        \tilde{x}_{k+1} &= \nabla M_\vartheta^* (\nabla M_\theta (\tilde{x}_k) - t_k \nabla f(\tilde{x}_k)) \label{},\\
        L(\theta, \vartheta) &=  \mathbb{E}_{f,x}\left[\sum_{k=1}^N r_k f(\tilde{x}_k) + s_k\|(\nabla M_\vartheta^* \circ \nabla M_\theta - I)(\tilde{x}_k)\|\right], \label{eq:trainingLoss}
\end{align}
\end{subequations}

\noindent where $r_k,\ s_k\ge 0$ are some arbitrary weights. For training purposes, we took $r_k = r = 1$ as constant throughout, and varied $s_k = s_{\text{epoch}}$ to increase as training progresses. In particular, we will take $s_0=1$, and increase the value every 50 epochs by a factor of $1.05$. To train our mirror maps, we use a Monte Carlo average of \eqref{eq:trainingLoss} over realizations of $f$ and initializations $x_0$ derived from the training data. This empirical average is optimized using the Adam optimizer for the network parameters $\theta, \vartheta$. This can be written as follows for a minibatch $\{f^{(i)}, x_0^{(i)}\}_{i=1}^B$ of size $B$:

\begin{equation}
    \tilde{L}(\theta, \vartheta) = \frac{1}{B}\sum_{i=1}^B\left[\sum_{k=1}^N r_k f^{(i)}(\tilde{x}_k^{(i)}) + s_k\|(\nabla M_\vartheta^* \circ \nabla M_\theta - I)(\tilde{x}_k^{(i)})\|\right].
\end{equation}


The maximum training iteration was taken to be $N=10$, which provided better generalization to further iterations than for smaller $N$. While $N$ could be taken to be larger, this comes at higher computational cost due to the number of MD iterates that need to be computed. We found that endowing $\mathcal{X}=\R^d$ with the $L^1$ norm was more effective than using the Euclidean $L^2$ norm. The aforementioned convergence results can be then applied with respect to the dual norm $\|\cdot\|_* = \|\cdot\|_\infty$. 

We additionally find it useful to allow the step-sizes to vary over each iteration, rather than being fixed. We will refer to the procedure where we additionally learn the step-sizes as \emph{adaptive LMD}. The learned step-sizes have to be clipped to a fixed interval to maintain convergence and prevent instability. The LMD mirror maps are trained under this ``adaptive" setting, and we will have a choice between using the learned step-sizes and using fixed step-sizes when applying LMD on test data. For testing, we will plot the methods applied with multiple step-sizes. These step-sizes are chosen relative to a `base step-size', which is then multiplied by a `step-size multiplier', denoted as `step-size multi' in subsequent figures.

Training of LMD amounts to training the mirror potentials and applying the approximate mirror descent algorithm. For training, target functions are sampled from a training set, for which the loss \cref{eq:trainingLoss} is minimized over the mirror potential parameters $\theta$ and $\vartheta$. After training, testing can be done by applying the approximate mirror descent algorithm \cref{eq:approximateMD} directly with the learned mirror maps, requiring only forward passes through the networks. This allows for efficient forward passes with fixed memory cost, as extra iterates and back-propagation are not required.

All implementations were done in PyTorch, and training was done on Quadro RTX 6000 GPUs with 24GB of memory \cite{PyTorch}. The code for our experiments are publicly available\footnote{\url{https://github.com/hyt35/icnn-md}}.

\section{Learned Mirror Maps With Closed-Form Inverses}
\label{sec:exactMDMaps}
\sm{Use a different title, something like `illustrative examples with simple mirror maps with closed-form backward map'}\hy{changed}
We illustrate the potential use of \ac{LMD} by learning simple mirror maps with closed-form backward maps, and how this can lead to faster convergence rates on certain problems. We demonstrate these maps on two convex problems: solving unconstrained least squares, and training an \ac{SVM} on 50 features. We first mention two functional mirror maps that can be parameterized using neural networks, and describe the training setup in this scenario. 

One possible parameterization of the mirror potential is using a quadratic form. This can be interpreted as gradient descent, with a multiplier in front of the gradient step. The mirror potentials and mirror maps are given as follows, where $x \in \R^d$ and $A \in \R^{d\times d}$:%
\begin{equation}
    \Psi(x) = \frac{1}{2} x^\top A x,\ \nabla \Psi(x) = \left(\frac{1}{2}A+\frac{1}{2}A^\top\right)x,\
    \nabla \Psi^*(y) = \left[\frac{1}{2}A+\frac{1}{2}A^\top\right]^{-1}y.
\end{equation}

The weight matrix $A$ was initialized as $A = I + E$, where $I$ is the identity matrix and $E$ is a diagonal matrix with random $N(0,0.001)$ entries. For $\Psi$ to be strictly convex, the symmetrization $(A+A^\top)/2$ needs to be positive definite. With this initialization of $A$, we numerically found in our example that explicitly enforcing this non-negativity constraint was not necessary, as the weight matrices $A$ automatically satisfied this condition after training. 



Another simple parameterization of the mirror potential is in the form of a neural network with one hidden layer. In particular, we will consider the case where our activation function is a smooth approximation to leaky-ReLU, given by $g(t) \coloneqq \alpha t + (1-\alpha) \log(1+\exp(t))$. Here, the binary operator $\odot$ for two similarly shaped matrices/vectors is the Hadamard product, defined by component-wise multiplication $(x\odot y)_i = x_i y_i$. Operations such as reciprocals, logarithms, exponentials and division applied to vectors are to be taken component-wise. For $x \in \R^d, A \in \R^{d \times d}, w \in \R_+^d$, the maps are given as follows:
\begin{subequations}
\begin{align}
    \Psi(x) &= w^\top g (Ax) = w^\top(\alpha Ax + (1-\alpha) \log (1+\exp(Ax))),\\
    \nabla \Psi(x) &= \alpha A^\top w + (1-\alpha) w \odot \frac{\exp(Ax)}{1+\exp(Ax)},\\
    \nabla \Psi^*(y) &= A^{-1} \log\left(\frac{(1-\alpha)^{-1} w^{-1} \odot (y-\alpha A^\top w)}{1-(1-\alpha)^{-1} w^{-1} \odot (y-\alpha A^\top w)}\right).
\end{align}
\end{subequations}

This is quite a restrictive model for mirror descent, as it requires the perturbed dual vector $(1-\alpha)^{-1} w^{-1} \odot (y-\alpha A^\top w) - \eta \nabla{f}$  to lie component-wise in $(0,1)$ in order for the backward mirror map to make sense. Nevertheless, this can be achieved by clipping the resulting gradient value to an appropriate interval inside $(0,1)$. 

The negative slope parameter was taken to be $\alpha=0.2$. The weight matrix $A$ was initialized as the identity matrix with entry-wise additive Gaussian noise $N(0,0.01)$, and the vector $w$ was initialized entry-wise using a uniform distribution $\text{Unif}(0,1/d)$. 
\subsection{Least Squares}
The first problem class we wish to consider is that of least squares in two dimensions. This was done with the following fixed weight matrix and randomized bias vectors:
\begin{equation}
    \min_{x\in\R^2} \|Wx-b\|_2^2,\ W = \begin{pmatrix}
2 & 1 \\
1 & 2 \\
\end{pmatrix},\ b \in \R^2.
\end{equation}

For training LMD for least squares, the initialization vectors $x$ and target bias vectors $b$ were independently randomly sampled as Gaussian vectors $b,x\sim N(0, I_2)$. The function class that we wish to optimize over in \eqref{eq:MDTrainLoss} is:
\[\mathcal{F} = \{f_b(x) = \|Wx-b\|_2^2\, : b \in \R^2\},\quad x \sim \mathbb{P}_{x|f} = N(0, I_2),\]
where the expectation $\mathbb{E}_{f,x}$ is taken over $b,x \sim N(0, I_2)$. 

For this problem class, a classical MD algorithm is available. Observe that $\nabla f_b(x) = W^\top W (x - W^{-1}b)$. By taking $\Psi(x) = \frac{1}{2} x^\top (W^\top W) x$, the mirror maps are $\nabla \Psi(x) = (W^\top W) x$, $\nabla \Psi^*(x) = (W^\top W)^{-1} x$. The MD update step \eqref{eq:MDDef} applied to $f = f_b$ becomes
\begin{equation}
    x_{k+1} = (W^\top W)^{-1} ((W^\top W) x_k - t_k \nabla f(x_k)) = x_k - t_k(x_k - W^{-1}b).
\end{equation}
This update step will always point directly towards the true minimizer $W^{-1}b$, attaining linear convergence with appropriate step-size. In \Cref{fig:lsqmdlsqloss,fig:lsqmdnetloss}, this method is added for comparison as the ``MD" method.

We can observe this effect graphically in \Cref{fig:gdmdcurves}. This figure illustrates the effect of MD on changing the optimization path from a curve for GD, to a straight line for MD. Without loss of generality, suppose we take $b=0$ and work in the eigenbasis $\{v_1, v_2\}$ of $W$, so the function to minimize becomes $f(x_1, x_2) = 9x_1^2 + x_2^2$. From initialization $u = (u_1, u_2)$, the gradient flow induces the curve $\gamma(t) = (u_1 \exp(-9t), u_2 \exp(-t))$. The curvature restricts the step-size allowed for gradient descent and moreover increases the curve length compared to the straight MD line, leading to slower convergence. 

An alternative perspective is given using the mirror potential $\Psi$ in \Cref{fig:lsqmdlsqfwd}, which takes the shape of an elliptic paraboloid. In the eigenbasis $v_1 = (1,1),\ v_2 = (1,-1)$ of $W$, the greater curvature of $\Psi$ in the $v_1$ direction implies that gradients are shrunk in this direction in the MD step. In this case, the gradient is shrunk 9 times more in the $v_1$ direction than the $v_2$ direction, inducing the MD curve $\mu(t) = (u_1 \exp(-t), u_2 \exp(-t))$, which is a straight line.

\cref{fig:lsqmdlsq} and \cref{fig:lsqmdnet} illustrate the results of training LMD using the quadratic mirror potential and with the one-layer NN potential respectively. These figures include the evolution of the loss function, the iterates after 10 iterations of adaptive LMD as in the training setting, and a visualization of the mirror map. \cref{fig:lsqmdnetiter} shows the instabilities that occur when the domain of the backwards map is restricted. This is an example of a problem where applying LMD with a well-parameterized mirror map can result in significantly accelerated convergence.


\begin{figure}
\begin{subcolumns}[0.7\textwidth]
  \subfloat[Evolution of least squares loss when using quadratic LMD.]{\includegraphics[width=\subcolumnwidth]{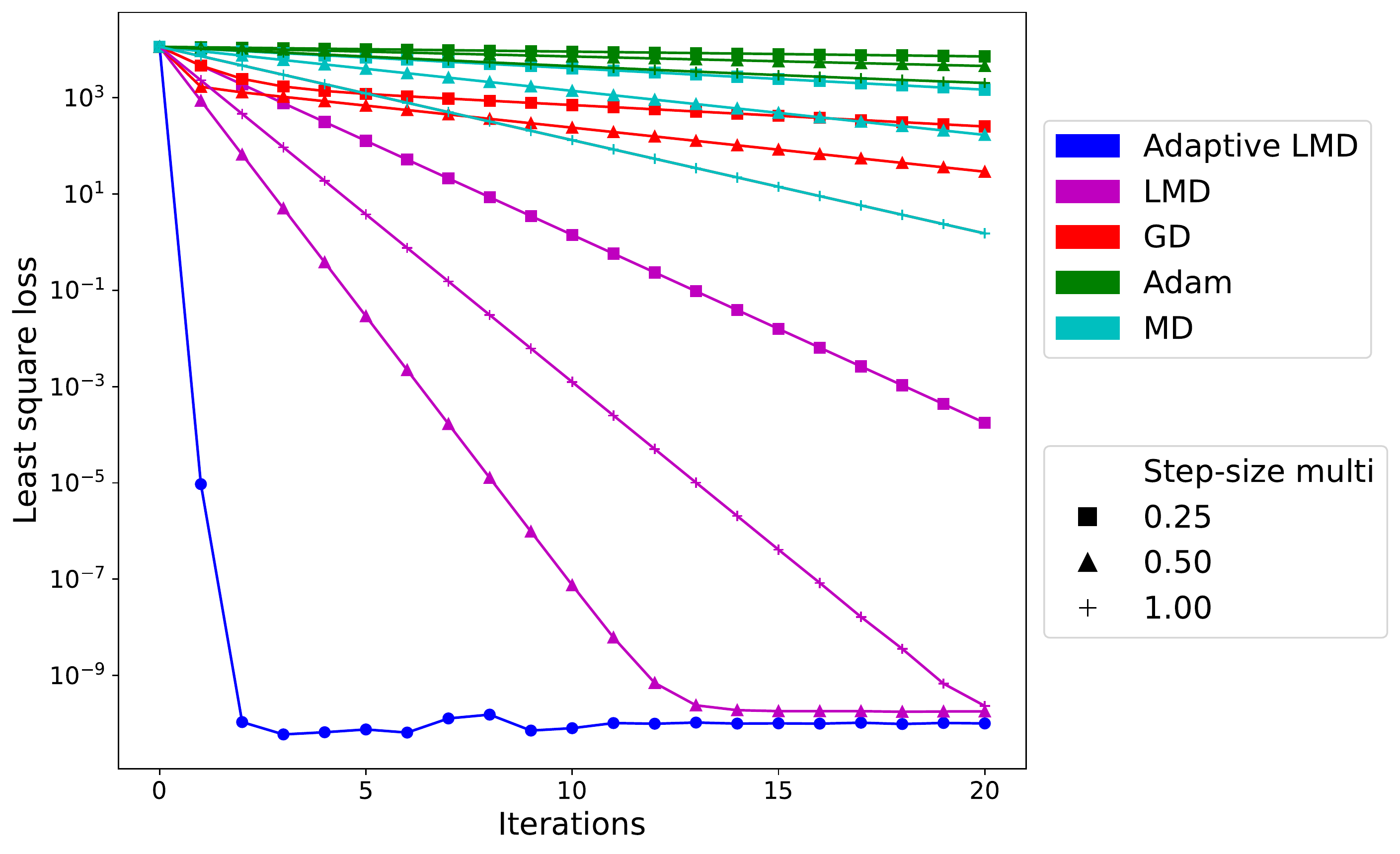}\label{fig:lsqmdlsqloss}}
\nextsubcolumn[0.28\textwidth]
  \subfloat[Optimization path for GD and MD in the eigenbasis of $W$.]{\includegraphics[width=\subcolumnwidth]{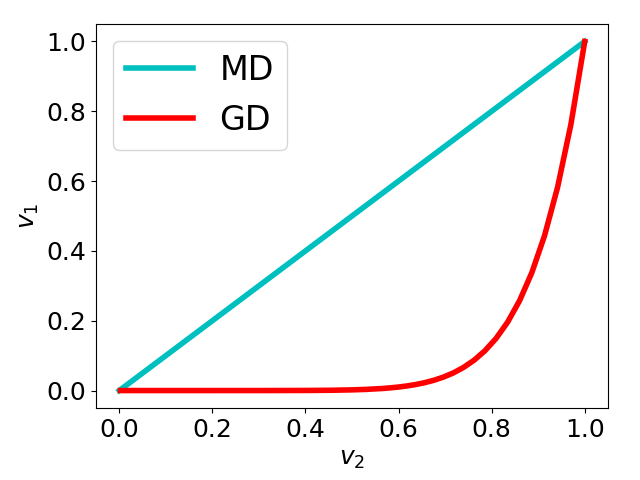}\label{fig:gdmdcurves}}
\nextsubfigure
  \subfloat[Mirror potential $\Psi$]{\includegraphics[width=\subcolumnwidth]{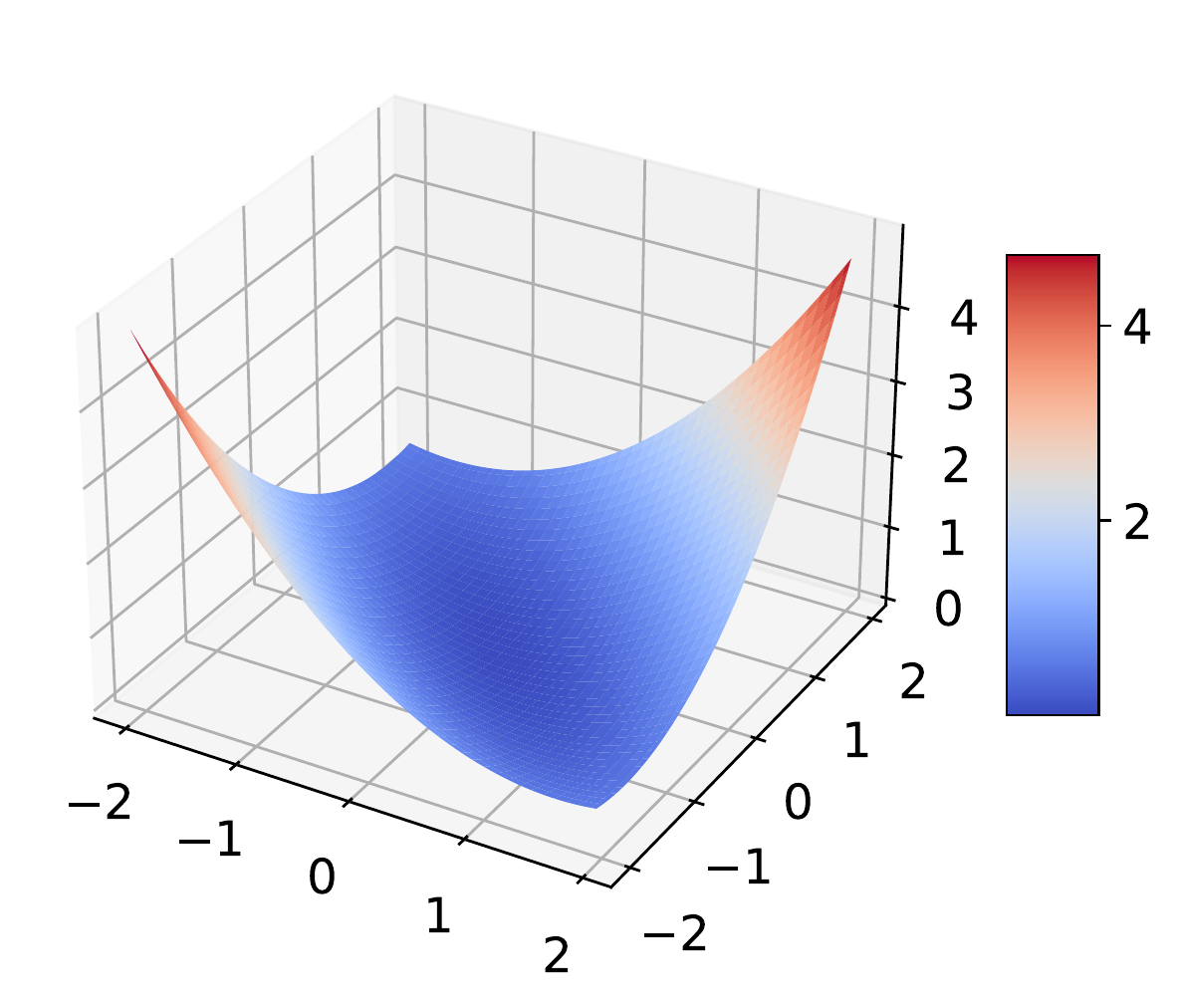}\label{fig:lsqmdlsqfwd}}
\end{subcolumns}
\caption{We observe that quadratic LMD is able to learn a map that allows for linear convergence in this case. Further learning the step-size allows for immediate convergence to machine precision. Note that $W$ has eigenvectors $v_1 = (1,1), v_2 = (1,-1)$ with eigenvalues $\lambda_1 = 3, \lambda_2 = 1$ respectively. As demonstrated in (b), the path that GD takes in the eigenbasis is curved as it minimizes the $v_1$ direction faster than the $v_2$ direction, whereas MD travels in a straight line to the minimizer. This is reflected in (c), where the quadratic form given by $\Psi$ curves more in the $v_1$ direction. Indeed, the learned weight is $A =  \begin{pmatrix}
    0.69 & 0.55 \\
    0.55 & 0.69 
\end{pmatrix}$, which is almost proportional to the classical MD weight $W^\top W = \begin{pmatrix}
    5 & 4 \\
    4 & 5 
\end{pmatrix}$.}
\label{fig:lsqmdlsq}%
\end{figure}

\begin{figure}
\begin{subcolumns}[0.7\textwidth]
  \subfloat[Evolution of least squares loss when using 1-layer-NN LMD.]{\includegraphics[width=\subcolumnwidth]{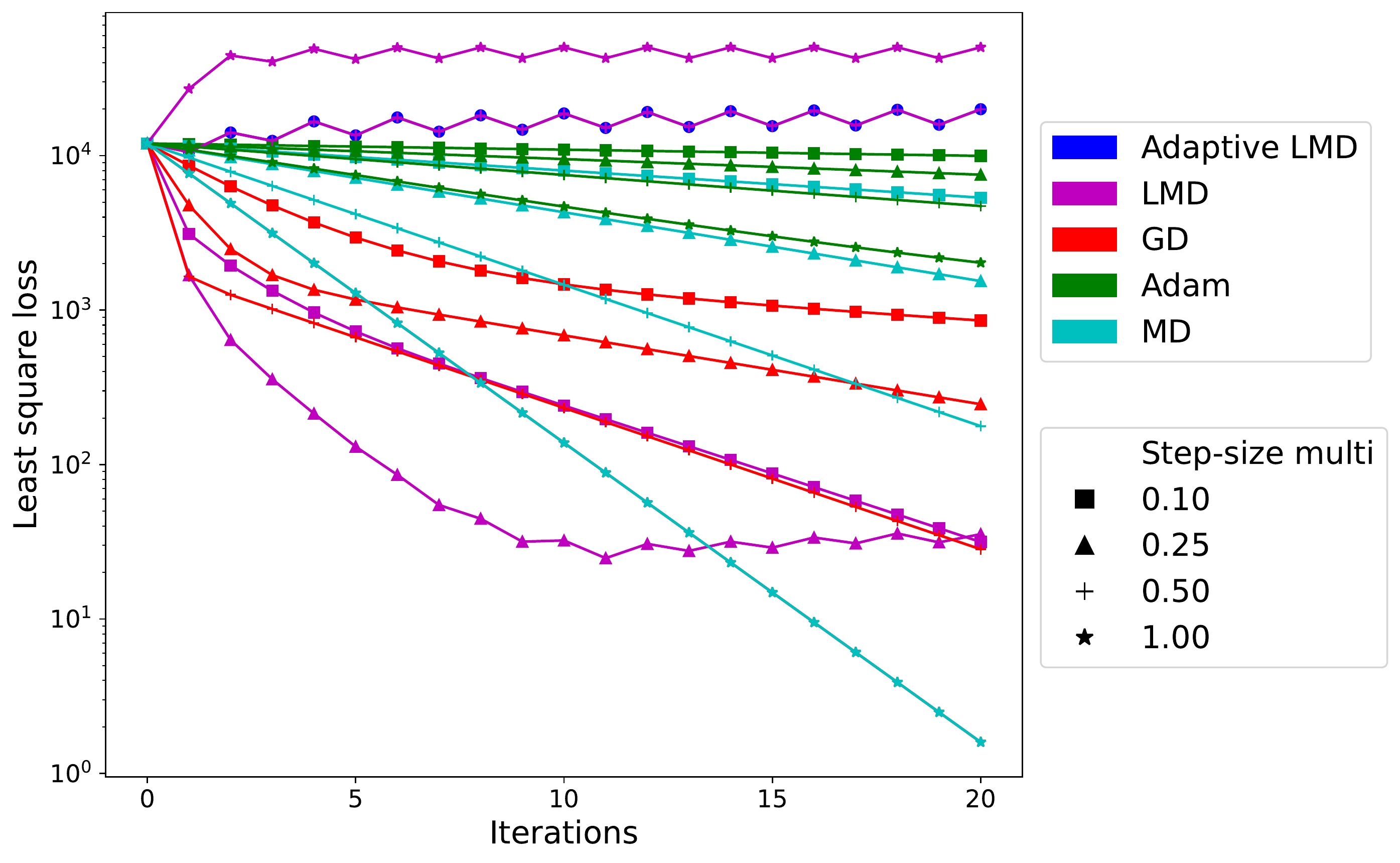}\label{fig:lsqmdnetloss}}
\nextsubcolumn[0.28\textwidth]
  \subfloat[LMD iterations (blue) and true solution (orange)]{\includegraphics[width=\subcolumnwidth]{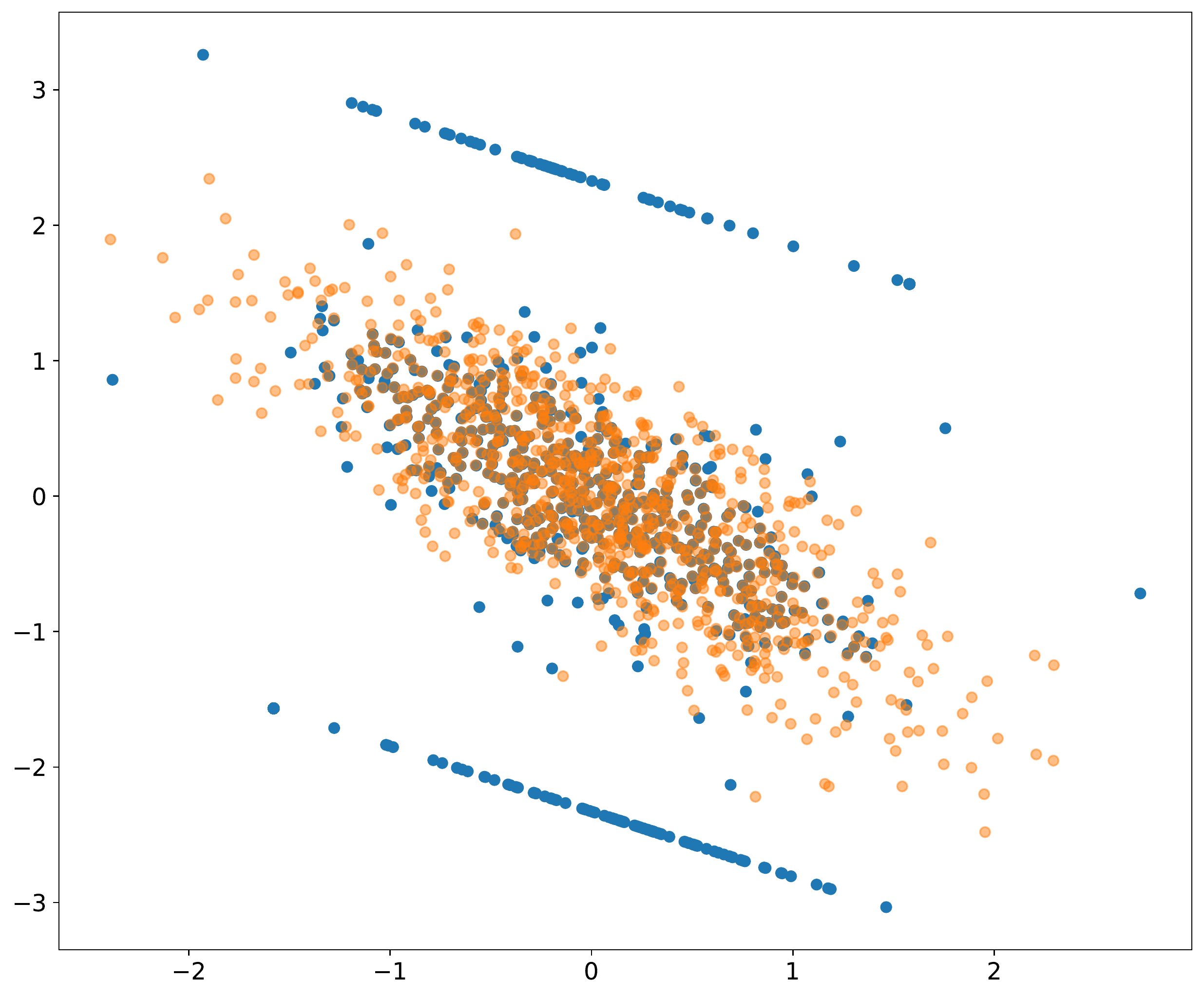}\label{fig:lsqmdnetiter}}
\nextsubfigure
  \subfloat[Mirror potential $\Psi$]{\includegraphics[width=\subcolumnwidth]{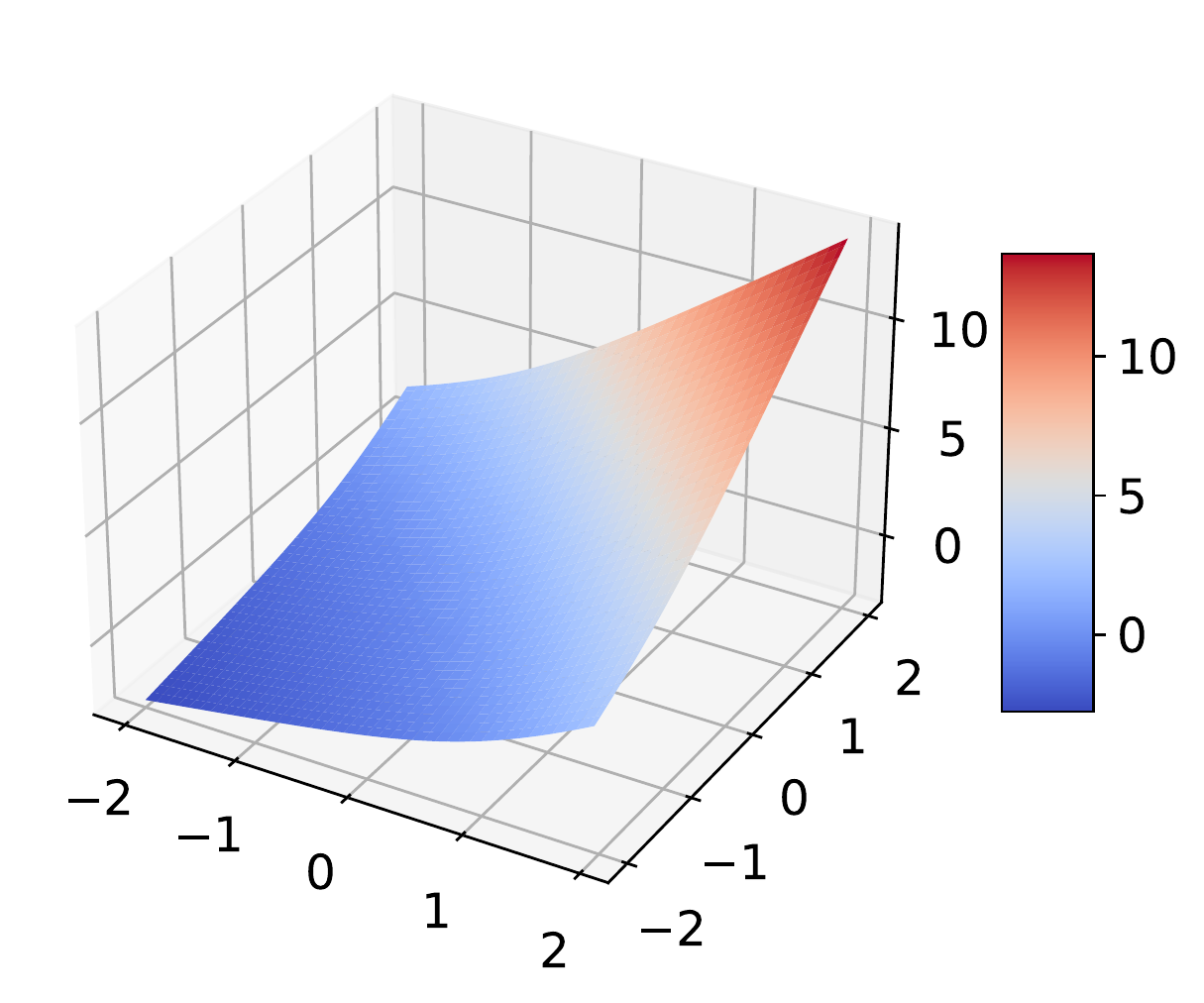}\label{fig:lsqmdnetfwd}}
\end{subcolumns}
\caption{We can see the effect of needing to clip the dual iterates, as it creates a pair of lines (in blue). This heavily affects the performance when using certain step-sizes, and demonstrates the issues with such simple models. Note that the adaptive LMD and LMD with step-size multi $0.5$ are identical. This is due to the choice of interval that the step-size is clipped to be in. The lower bound of the interval coincides with the step-size corresponding to step-size multiplier $0.5$, and adaptive LMD learns the step-sizes to be this lower bound.}%
\label{fig:lsqmdnet}%
\end{figure}


\subsection{SVM}
The second problem class is of training an SVM on the 4 and 9 classes of MNIST. From each image, 50 features were extracted using a small neural network $\phi:[0,1]^{28\times28} \rightarrow \R^{50}$, created by training a neural network to classify MNIST images and removing the final layer. The goal is to train an SVM on these features using the hinge loss; see the SVM formulation in \cref{sec:SVM} for more details. The problem class is of the form 
\[\mathcal{F} = \left\{f_{\mathcal{I}}(\mathbf{w},b) = \frac{1}{2} \mathbf{w}^\top \mathbf{w} + C \sum_{i \in \mathcal{I}} \max(0, 1-y_i (\mathbf{w}^\top \phi_i + b))\right\}.\]
This is the feature class of training SVMs with certain features $\phi_i$ and targets $y_i$, with $i$ taking values in some index set $\mathcal{I}$. In this case, the features and targets were taken as subsets of features extracted from the MNIST dataset. The initializations $(\mathbf{w},b) \sim \mathbb{P}_{(\mathbf{w},b)|f}$ were taken to be element-wise standard Gaussian.

\cref{fig:svmmdlsq} and \cref{fig:svmmdnet} demonstrate the evolution of the SVM hinge loss under the quadratic and one-layer NN mirror potentials respectively. In \cref{fig:svmmdlsq}, the loss evolution under quadratic LMD is faster compared to GD and Adam. This suggests that quadratic LMD can learn features that contribute more to the SVM hinge loss. We can see clearly the effect of learning the step-sizes for increasing convergence rate for the first 10 iterations in the ``adaptive LMD" plot, as well as the effect of a non-optimized step-size after 10 iterations by the increase in loss. In \cref{fig:svmmdnet}, we can see that the one-layer NN mirror potential can perform significantly better than both Adam and GD. However, the instability due to the required clipping causes the hinge loss to increase for larger step-sizes. This instability further motivates the use of using a more expressive neural network, as well as directly modelling the backwards mirror map.



\begin{figure}
\centering
\begin{minipage}[t]{.48\textwidth}
  \centering
  \includegraphics[trim=0 0 170 0, clip, height=5.5cm]{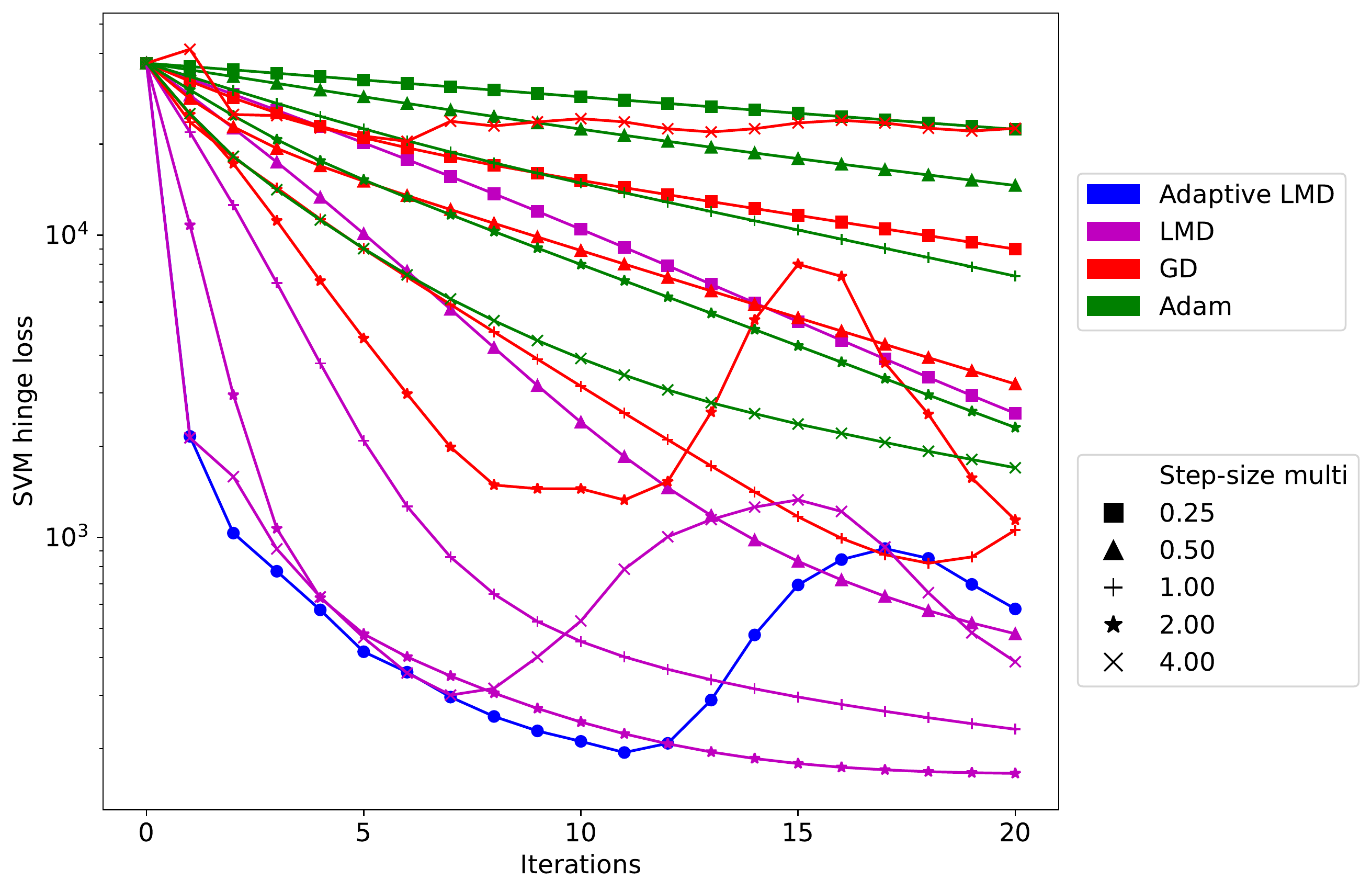}
  \captionof{figure}{Evolution of SVM hinge loss under quadratic LMD. LMD outperforms GD and Adam, with nice convergence for the middle step-size multipliers. With only 3 out of 51 eigenvalues of $A$ being greater than 1 and the rest below $0.5$, this suggests that quadratic LMD is able to learn combinations of features that contribute most to the hinge loss. }
  \label{fig:svmmdlsq}
\end{minipage} \quad \begin{minipage}[t]{.48\textwidth}
  \hspace{-0.5cm}\includegraphics[height=5.5cm,left]{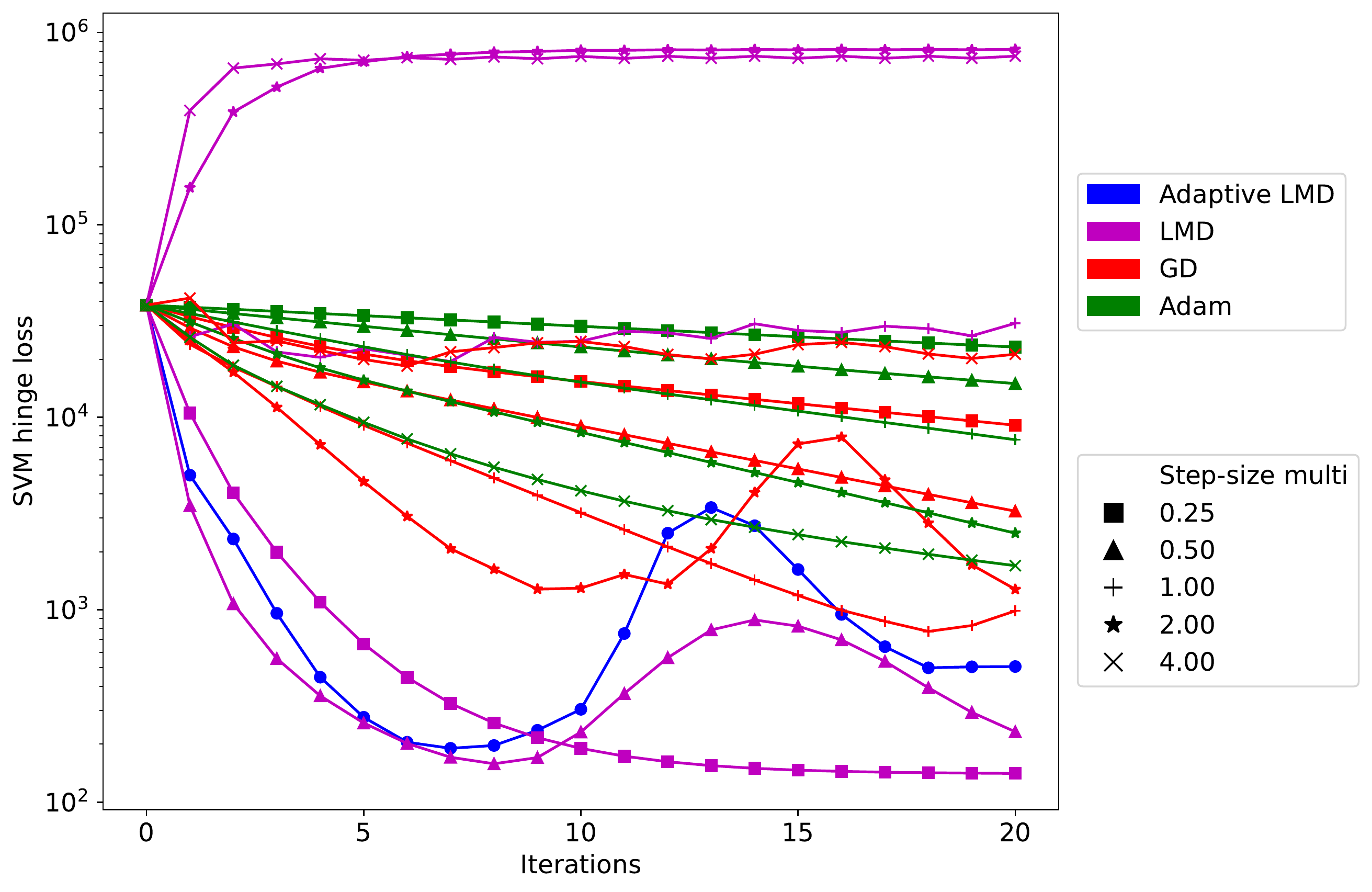}
  \captionof{figure}{1 layer NN mirror map applied to SVM training. In this case, LMD outperforms the other methods for smaller step-sizes. The two LMD lines with higher loss is due to the component-wise clipping that is required for this method.}
  \label{fig:svmmdnet}
\end{minipage}
\end{figure}

Both of these methods require parameterizations of matrices, and moreover require computing the inverse of these matrices, which can cause instability when performing back-propagation. Moreover, such closed form expressions of the convex conjugate are not readily available in general, especially for more complicated mirror potentials parameterized using deep networks. Therefore, training LMD under this setting can not be effectively scaled up to higher dimensions. This motivates our proposed approach and analysis of using two separate networks instead, modeling the mirror and inverse mirror mappings separately.

\sm{Some, or maybe all of the figures are not referenced anywhere in the main text. Also, terms like `adaptive LMD' appear in the caption without any further explanation. I would suggest writing a small paragraph explaining how you present the results and what the legends mean.} \hy{indeed. have added references in the text with more explanation.}

\sm{This section is difficult to follow. So, you solve three simple convex problems with two specific parameterizations of LMD for which there is a closed-form expression for the convex conjugate. In that case, the subsections follow no particular order. I would suggest starting the section with the two simple parameterizations chosen and their conjugate, and then having three subsections for the three different problems.  considered.} \hy{hopefully better now.}


\section{Numerical Experiments}

\label{sec:experiments}
\sm{Motivated by the examples in the preceding section, we employ the LMD method for a number of convex problems arising in inverse problems and machine learning. Specifically, we use a deep \ac{ICNN} for learning the optimal Bregman distance. However, unlike the constructions in the previous section, the convex conjugate cannot be expressed in closed-form and we resort to approximate inverse of the mirror map modeled by a second neural neural network (which is not necessarily an \ac{ICNN}}\hy{added} 

Motivated by the examples in the preceding section, we employ the LMD method for a number of convex problems arising in inverse problems and machine learning. Specifically, we use a deep \ac{ICNN} for learning the optimal forward mirror potential. However, unlike the constructions in the previous section, the convex conjugate cannot be expressed in a closed form. We instead approximate the inverse of the mirror map using a second neural network, which is not necessarily the gradient of an \ac{ICNN}. We will demonstrate how this can allow for learning the geometry of the underlying problems and result in faster convergence. We will namely be applying the LMD method to the problems of learning a two-class SVM classifier, learning a linear classifier, and model-based denoising and inpainting on STL-10. The dimensionality of these problems, with STL-10 containing images of size $3 \times 96 \times 96$, makes the matrix-based \ac{MD} parameterizations proposed in the previous section infeasible. A list of training and testing hyper-parameters can be found in \cref{tab:hyperparams}. 

\sm{Rewrite this sentence. The method is applied on problems and not on images.}

\sm{Again, for someone who is unfamiliar with your experiments, it is hard to follow what your training objective is. This is a central component of the paper, so take a paragraph to explain all the details.} \hy{i have moved it to the main results section.}

\subsection{SVM and Linear Classifier on MNIST}
\sm{Write it differently. You have done two experiments: (i) learning a two-class SVM classifier and (ii) learning a multi-class linear classifier by cross-entropy minimization. Writing NN training could be misleading.}\hy{fixed}
We consider first the problem of training an two-class SVM classifier and a multi-class linear classifier using features extracted from MNIST. A small 5 layer neural network (2 convolutional layers, 1 dropout layer and 2 fully connected layers) was first trained to a 97\% accuracy, with the penultimate layer having 50 features. We consider the problem of training an SVM on these features for two specific classes. We also consider the problem of retraining the final layer of the neural network for classification, which is equivalent to a linear classifier. Our goal is to minimize the corresponding losses as quickly as possible using LMD. Let us denote the neural network that takes an image and outputs the corresponding 50 features as $\phi:[0,1]^{28\times 28} \rightarrow \R^{50}$. This will work as a feature extractor, on which we will train our SVMs and linear classifiers. \sm{This is a very informal way of writing for a paper.} \hy{hopefully better}

\sm{Explain the architecture through equations. This reads super informal.} \hy{changed}

\subsubsection{SVM}\label{sec:SVM}
Our objective is to train a support vector machine (\ac{SVM}) on the 50 extracted features to classify two classes of digits, namely 4 and 9. Given feature vectors $\phi_i \in \R^d$ and target labels $y_i \in \{\pm 1\}$, an SVM consists of a weight vector $\mathbf{w} \in \R^d$ and bias scalar $b \in \R$. The output of the SVM for a given feature vector is $\mathbf{w}^\top \phi_i + b$, and the aim is to find $\mathbf{w}$ and $b$ such that the prediction $\sign(\mathbf{w}^\top \phi_i + b)$ matches the target $y_i$ for most samples. The hinge loss formulation of the problem is as follows, where $C>0$ is some positive constant \cite{bishop2006SVMBook}:%
\sm{The notation is inconsistent. From the beginning, you have used $x$ to denote the optimization variable. Here, $x$ suddenly denotes something else. You can use $\theta$ for the optimization variable from the beginning, and later on use the same $\theta$ to denote the classifier parameters.}\hy{should be consistent now}%
%
\begin{equation}
    \min_{\mathbf{w},b} \frac{1}{2}\mathbf{w}^\top \mathbf{w} + C \sum_{i} \max(0, 1- y_i(\mathbf{w}^\top \phi_i + b)).
\end{equation}%
The function class that we wish to learn to optimize for is thus
\begin{equation}
\mathcal{F} = \left\{f_{\mathcal{I}}({\mathbf{w},b})=\frac{1}{2}\mathbf{w}^\top \mathbf{w} + C \sum_{i \in \mathcal{I}} \max(0, 1- y_i(\mathbf{w}^\top \phi_i + b))\right\},
\end{equation}
where each instance of $f$ depends on the set of feature-target pairs, indexed by $\mathcal{I}$. We use $C=1$ in our example.  For each training iteration, $\mathcal{I}$ was sampled as a subset of 1000 feature-target pairs from the combined 4 and 9 classes of MNIST, giving us a target function $f_{\mathcal{I}}(\mathbf{w},b) \in \mathcal{F}$. A batch of 2000 initializations $(\mathbf{w},b)$ was then sampled according to a standard normal distribution $\mathbb{P}_{(\mathbf{w},b)|f} = N(0, I_{50+1})$. Subsets from the training fold were used for training LMD, and subsets from the test fold to test LMD. 



\cref{fig:svm_comb} shows the evolution of the hinge loss and SVM accuracy of the LMD method, compared with GD and Adam. We can see that adaptive LMD and LMD with sufficiently large step-size both outperform GD and Adam. In particular, considering LMD with step-size multiplier 2, we can see accelerated convergence after around 10 iterations. One possible interpretation is that the network is learning more about the geometry near the minima, which is why we do not see this increased convergence for smaller step-sizes. The LMD method with approximate backwards map is much more stable in this case, even if it performs slightly worse than LMD with the one-layer NN-based mirror potential as in \cref{fig:svmmdnet}.


\begin{figure}
    \centering
    \includegraphics[width=\textwidth]{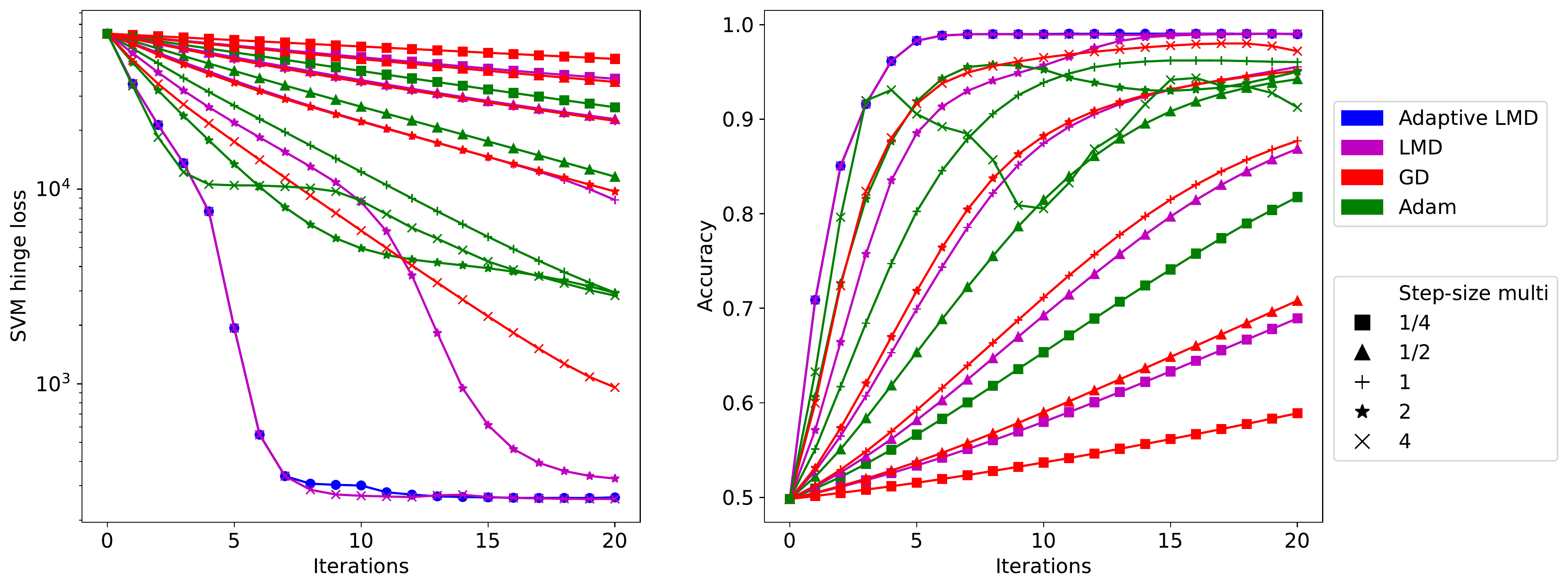}
    \caption{Plot of the SVM hinge loss (left) and SVM test accuracy (right) when optimizing from random SVM initializations. The mirror descent significantly outperforms both gradient descent and Adam, and does not exhibit as large of a decrease in accuracy for later iterations.}
    \label{fig:svm_comb}
\end{figure}

\subsubsection{Linear Classifier}
We additionally consider the problem of training a multi-class linear classifier on the MNIST features. We use the same neural network $\phi$ to produce 50 features, and consider the task of training a linear final layer, taking the 50 features and outputting 10 scores corresponding to each of the digits from 0-9.  The task of finding the optimal final layer with the cross entropy loss can be formulated as follows:%
\begin{equation}
    \min_{W \in \R^{50 \times 10}} \mathbb{E}_{(\phi,y) \in \text{features}\times\text{target}}\left[ -\log \frac{\exp(W\phi)_{y}}{\sum_{i=0}^{9} \exp(W\phi)_{i}}\right].
\end{equation}%
The corresponding feature class we wish to learn to optimize for is:
\begin{equation}
    \mathcal{F} = \left\{f_{\mathcal{I}}(W) = \frac{1}{|\mathcal{I}|} \sum_{(\phi, y) \in \mathcal{I}}\left[ -\log \frac{\exp(W\phi)_{y}}{\sum_{i=0}^{9} \exp(W\phi)_{i}}\right]\right\},
\end{equation}
where each instance of $f$ depends on the set of feature-target pairs, indexed by $\mathcal{I}$.  For each training iteration, $\mathcal{I}$ was sampled as a subset of 2000 feature-target pairs from MNIST, giving a target function $f_{\mathcal{I}}(W) \in \mathcal{F}$. A batch of 2000 initializations $W$ was then sampled according to a standard normal distribution $\mathbb{P}_{W|f} = N(0, I_{50\times 50})$ for training. Subsets from the training fold were used for training LMD, and subsets from the test fold to test LMD. 



\cref{fig:nn_comb} shows the evolution of the cross-entropy loss and neural network classification accuracy under our optimization schemes. All of the LMD methods converge quite quickly, and we see that LMD with smaller step-sizes converge faster than larger step-sizes, reflecting a similar phenomenon in gradient descent. We additionally see that for LMD with step-size multiplier 4, the cross entropy loss has a large spike after 10 iterations. This is likely due to the the step-size being too large for the Lipschitz constant of our problem.

\begin{figure}
    \centering
    \includegraphics[width=\textwidth]{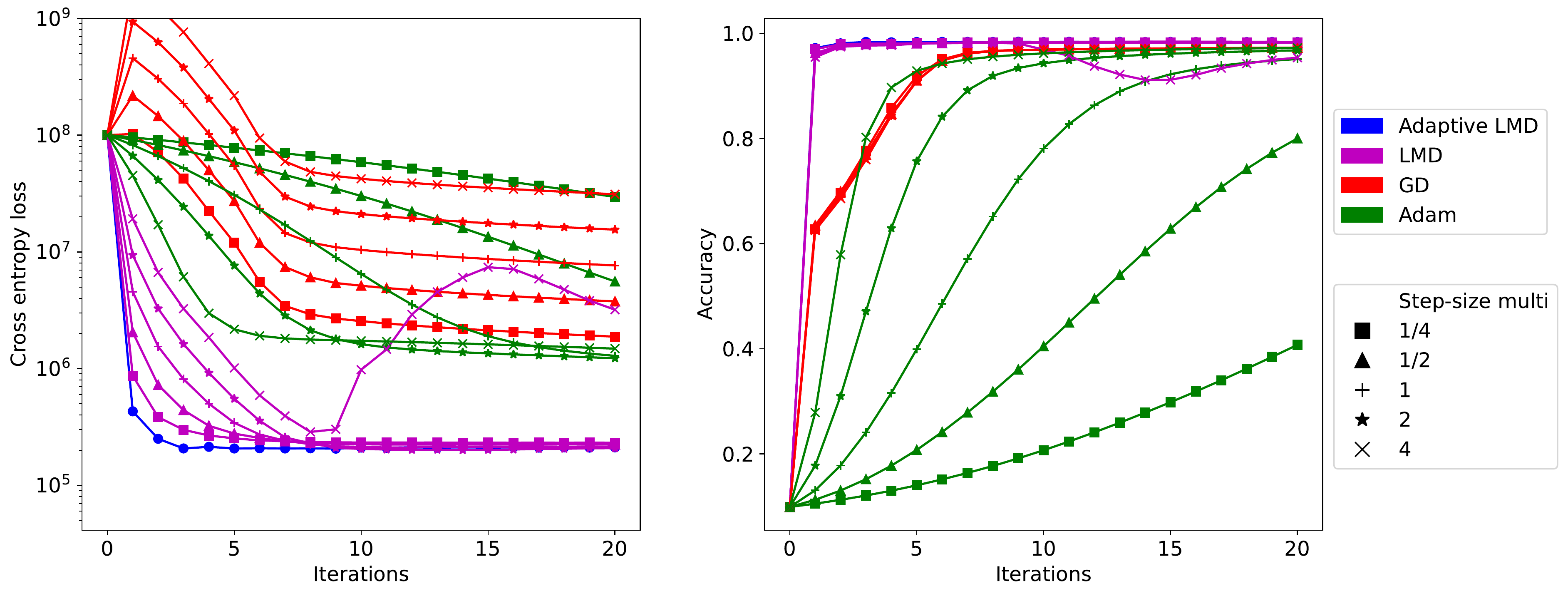}
    \caption{Plots of the linear classifier cross entropy loss (left) and  classification accuracy (right). MD converges significantly faster than both GD and Adam. However, it suffers from stability issues for larger step-sizes, demonstrated by the increase in loss after 10 iterations with step-size multiplier 4. This increase in loss is also reflected in the decrease of accuracy.}
    \label{fig:nn_comb}
\end{figure}
\subsection{Image Denoising}\label{sec:denoising}
\sm{Again the notation changes here.}\hy{changed.}
We further consider the problem of image denoising on the STL-10 image dataset \cite{2011coatesSTL10}. Our goal is to have a fast solver for a single class of variational objectives designed for denoising, rather than devise a state-of-the-art reconstruction approach. As the reconstructions are completely model-driven and do not have a learned component, the quality of the solution will depend completely on the chosen model. 

The denoising problem is to minimize the distance between the reconstructed image with an additional regularization term, which we have chosen to be total variation (TV). The corresponding convex optimization problems can be represented as follows:%
\begin{equation}\label{eq:denoiseTVEqn}
    \min_{x\in \mathcal{X}}\|x-y\|^2_{\mathcal{X}} + \lambda\|\nabla x\|_{1, \mathcal{X}}.
\end{equation}

Here, $\mathcal{X}$ is the space of images from a pixel space $\mathcal{S} \mapsto [0,1]$, $y$ is a noisy image, $\lambda>0$ is a regularization parameter, and the gradient $\nabla x$ is taken over the pixel space. In the case of STL-10, the pixel space is $3 \times 96 \times 96$. The function class we wish to learn to optimize over is thus:%
\begin{equation}\label{eq:denoiseFnClass}
    \mathcal{F} = \left\{f(x) = \|x-y\|^2_{\mathcal{X}} + \lambda\|\nabla x\|_{1, \mathcal{X}}\ : \text{noisy images } y \right\}.
\end{equation}

In our experiments, $y$ was taken to have 5\% random additive Gaussian noise over each color channel, and the initializations $x$ were taken to be the noisy images $x = y$. We trained the LMD method on the training fold of STL10, and evaluated it on images in the test fold. 

The TV regularization parameter was manually chosen to be $\lambda=0.3$ by visually comparing the reconstructions after running gradient descent for 400 iterations. To parameterize the mirror potentials, we use a convolutional neural network with an ICNN structure, as the data is in 2D (with 3 color channels). We additionally introduce a quadratic term in each layer for added expressiveness. The resulting models are of the following form, where the squaring operator $[\, \cdot\, ]^2$ for a vector is to be taken element-wise, and $\sigma$ is a leaky-ReLU activation function:
\begin{equation}
    z_{i+1} = \sigma \left(W_i^{(z)} z_i + W_i^{(x,l)}x + [W_i^{(x,q)}x]^2 + b_i\right),\quad M(x;\theta) = z_l.
\end{equation}
By clipping the kernel weights $W_i^{(z)}$ to be non-negative, we are able to obtain an input convex convolutional neural network.


\sm{This disclaimer sounds super defensive. Phrase it differently, and set out the goal right at the beginning. State clearly that the variational objective is completely model-driven and there is no learned component in it. Devising a state-of-the-art reconstruction approach is not the goal of the work. The goal is to rather design a fast solver for a `given' optimization problem. The quality of the solution depends entirely on the underlying variational problem.}\hy{edited}

\cref{fig:denoise_recon} and \cref{fig:denoiseVis} show the result of applying the LMD algorithm to the function class of denoising models \eqref{eq:denoiseFnClass}. In general, LMD and adaptive LMD outperform GD and Adam for optimizing the reconstruction loss. Moreover, \cref{fig:denoiseVis} shows that the reconstructed image using LMD is very similar to the ones obtained using Adam, which is a good indicator that LMD indeed solves the corresponding optimization problem efficiently. 

\cref{fig:denoiseVisfwd} shows a pixel-wise ratio between the forward map $\nabla M_\theta(y)$ and noisy image $y$. The outline of the horse demonstrates that $\nabla M_\theta$ learns away from the identity, which should contribute to the accelerated convergence. In particular, we observe that around the edges of the horse, the pixel-wise ratio $\nabla M_\theta(y)/y$ is negative. Intuitively, this corresponds to the MD step performing gradient ascent instead of gradient descent for these pixels. As we are using TV regularization, the gradient descent step aims to create more piecewise linear areas. If we interpret gradient descent as a ``blurring" step, then MD will instead perform a ``sharpening" step, which is more suited around the edges of the horse. 

We additionally consider the effect of changing the noise level, and the ability of LMD to generalize away from the training function class. We keep the LMD mirror maps trained for 5\% additive Gaussian noise, and apply LMD to denoise images from STL-10 with additive Gaussian noise levels up to 20\%. We consider now the PSNR and SSIM of the denoised images compared to a ``true TV reconstruction", which is obtained by optimizing the objective \cref{eq:denoiseTVEqn} to a very high accuracy using gradient descent for 4000 iterations. We compare the iterates with respect to the true TV reconstruction as opposed to the ground truth, as we want to compare the resulting images with the minimum of the corresponding convex objective.


\Cref{tab:psnrssimdenoise} compares the PSNR and SSIM of denoised images obtained using LMD, Adam, and GD, compared against the true TV reconstructions. We apply GD and LMD with five fixed step-sizes ranging from $2.5\times 10^{-3}$ to $4\times 10^{-2}$ up to 20 iterations, and Adam with five learning rates ranging from $1.25\times 10^{-2}$ to $2\times 10^{-1}$ for 20 iterations. We then compare the best PSNR/SSIMs over all step-sizes and iterations for each method, and the best overall step-sizes for the 10th and 20th iteration. 

We see that LMD outperforms both GD and Adam when applied on the trained noise level of 5\% for the trained number of iterations $N=10$, with better SSIM up to 10\% noise as well. LMD also performs well for lower noise levels, which can be attributed to good forward-backward consistency near the true TV reconstruction. However, LMD begins to diverge for larger noise levels. This can be attributed to the increased noise being out of the training distribution, increasing the forward-backward loss and thereby causing instabilities.

\begin{figure}[h]
    \centering
    \includegraphics[width=0.9\textwidth,height=\textheight,keepaspectratio]{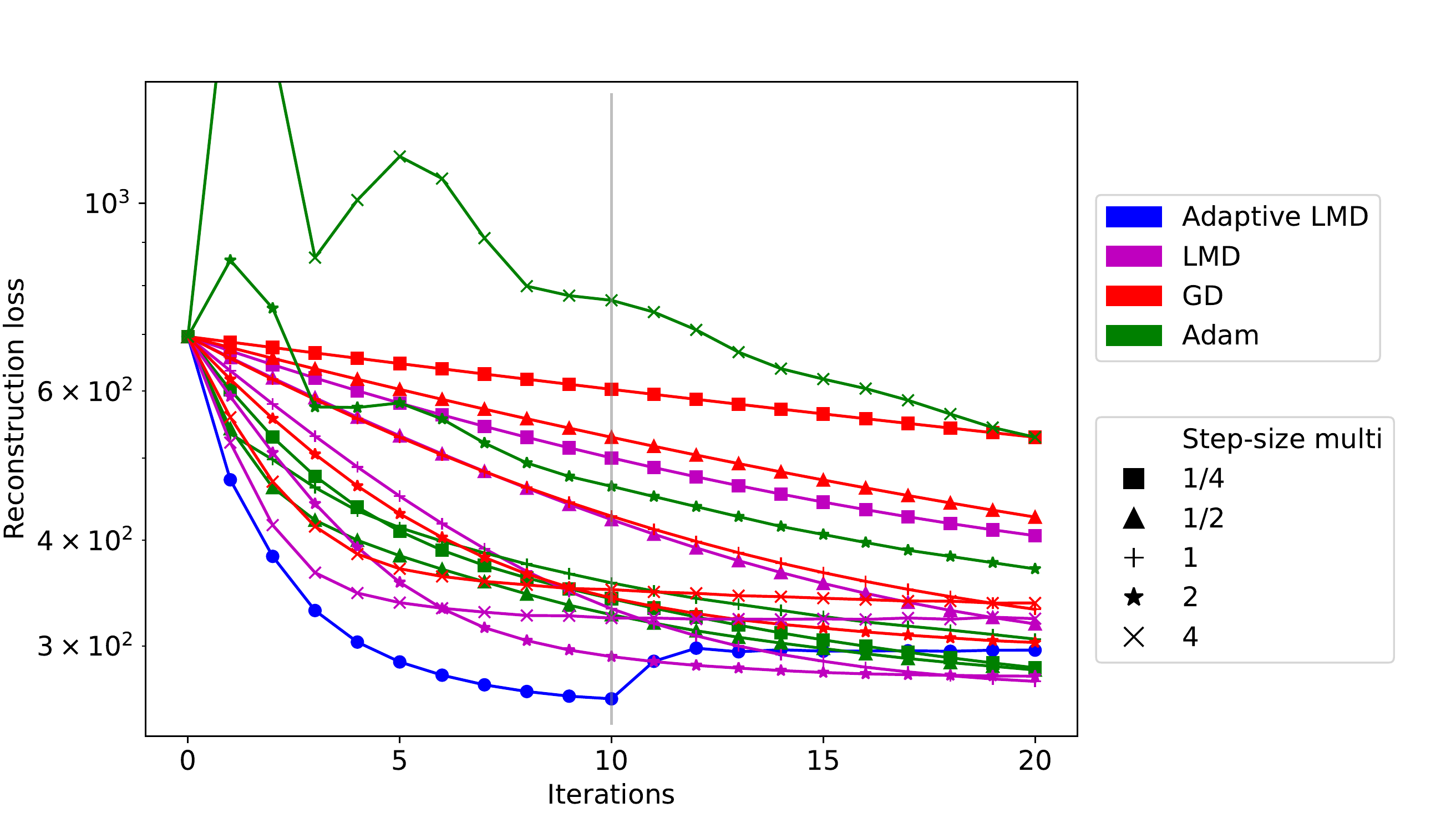}
    \caption{Denoising reconstruction loss. The vertical gray line at iteration 10 indicates the end of the training regime. After this line, the iterates are out-of-distribution for the proposed method.} LMD outperforms both GD and Adam for earlier iterations, however might not reach the minimum due to forward-backward inconsistency. The sharp increase in loss for adaptive LMD after 10 iterations is due to the choice of step-size to extend the trained 10 iterations. 
    \label{fig:denoise_recon}
\end{figure}
\begin{figure}[h]%
    \centering
    \subfloat[\centering Ratio between forward map and noisy image ]{{\includegraphics[height=3.6cm]{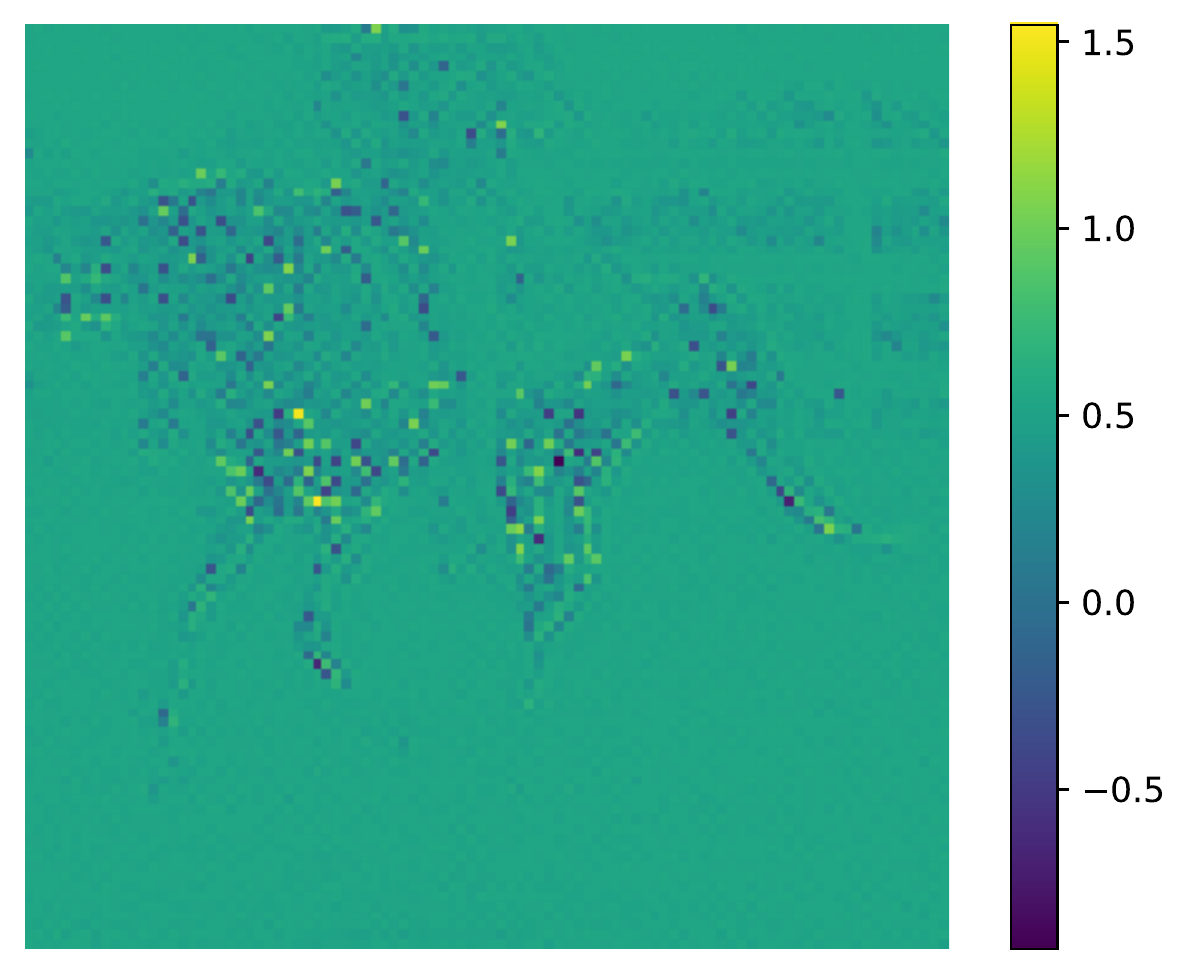}\label{fig:denoiseVisfwd} }}%
    \subfloat[\centering Reconstruction after 3 iterations of adaptive LMD]{{\includegraphics[height=3.6cm]{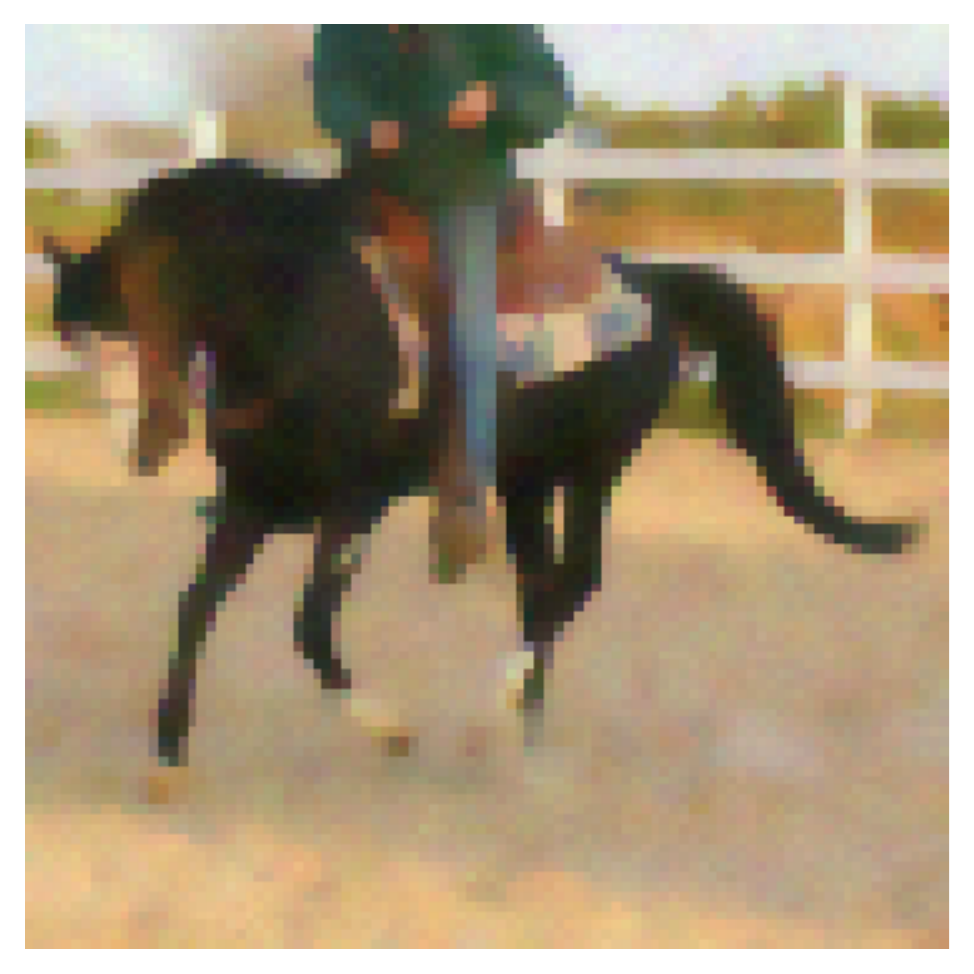}\label{fig:denoiseVismd}}}%
    \subfloat[\centering Reconstruction after 3 iterations of Adam]{{\includegraphics[height=3.6cm]{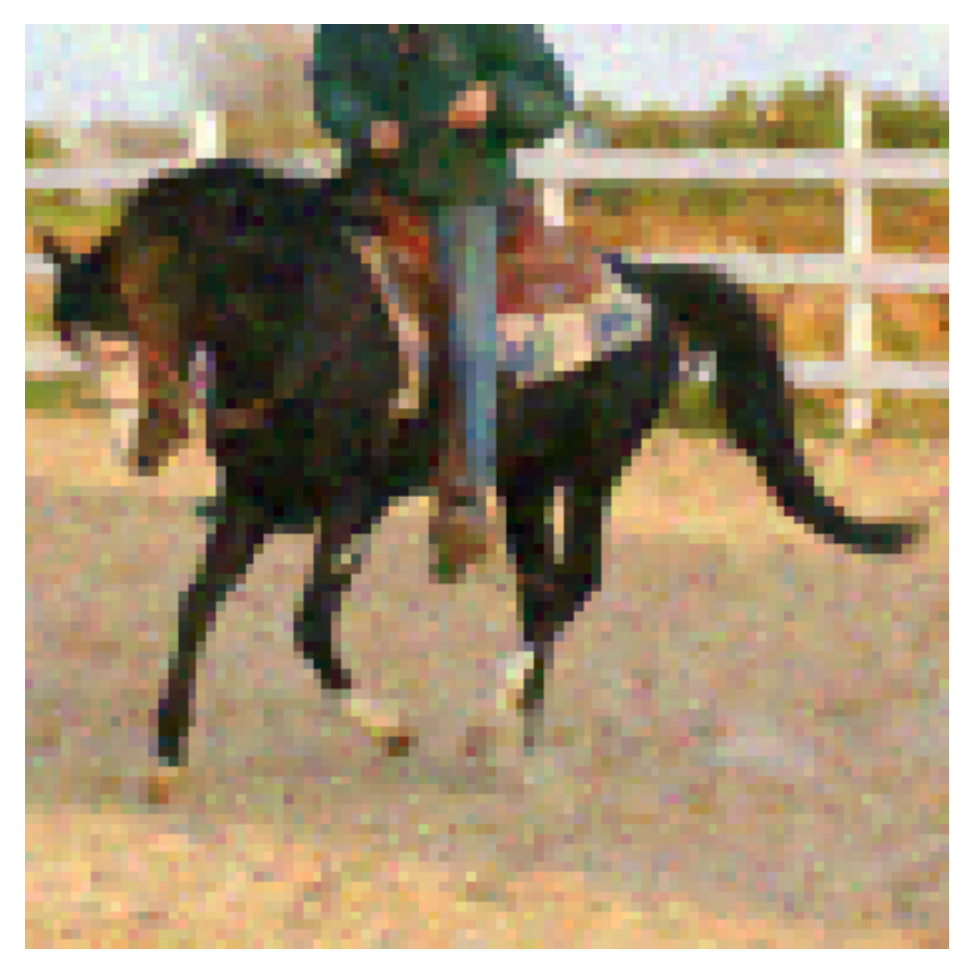}}}%
    \subfloat[\centering Reconstruction after 10 iterations of Adam]{{\includegraphics[height=3.6cm]{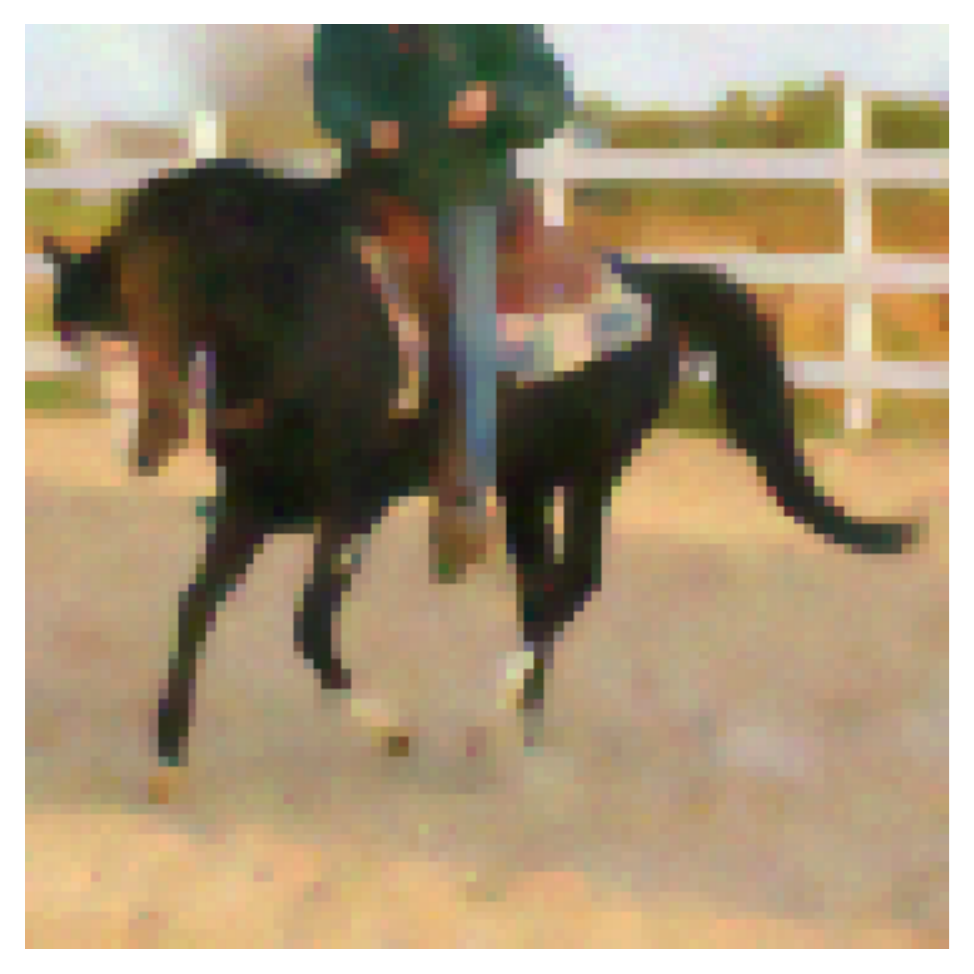}\label{fig:denoiseVisadam}}}%
    \caption{Visualization of outputs when when applying LMD for TV model-based denoising. We can see a faint outline of the horse when taking a pixel-wise ratio between the forward and noisy image indicating a region of interest. LMD allows for much faster convergence compared to Adam here, reaching a comparable reconstruction in only 3 iterations compared to 10 for Adam.}%
    \label{fig:denoiseVis}%
\end{figure}

\begin{table}[h]
\centering
\caption{Table of PSNR and SSIM, compared to the true TV reconstruction. As our goal is to minimize the TV-regularized loss function, we compare with the loss-minimizing image as opposed to the ground truth image. LMD outperforms both GD and Adam when applied for noise levels up to 5\% for the trained $N=10$ iterations, but is unstable for noise levels above 10\%, which are out-of-distribution. Values are taken as the best over five step-sizes.}
\label{tab:psnrssimdenoise}
\resizebox{\textwidth}{!}{%
\begin{tabular}{ll|lll|lll|lll}
\hline
\multicolumn{2}{l|}{\multirow{2}{*}{Gaussian Noise \%}} & \multicolumn{3}{c|}{Best} & \multicolumn{3}{c|}{Iteration 10} & \multicolumn{3}{c}{Iteration 20} \\ \cline{3-11} 
\multicolumn{2}{l|}{}        & GD    & Adam           & LMD            & GD    & Adam           & LMD            & GD    & Adam           & LMD            \\ \hline
\multirow{6}{*}{PSNR} & 1 & 30.91 & 34.13          & \textbf{34.25} & 27.92 & 31.04          & \textbf{33.27} & 30.91 & \textbf{34.03} & 32.88          \\
                      & 2 & 30.93 & 34.06          & \textbf{34.21} & 27.90 & 30.86          & \textbf{33.21} & 30.93 & \textbf{34.00} & 32.87          \\
                      & 5 & 31.09 & 33.36          & \textbf{34.22} & 27.73 & 29.91          & \textbf{32.92} & 31.09 & \textbf{33.44} & 33.11          \\
                      & 10 & 31.08 & \textbf{32.59} & 28.21          & 26.45 & \textbf{29.00} & 27.88          & 31.08 & \textbf{32.40} & 25.92          \\
                      & 15 & 29.68 & \textbf{32.56} & 21.39          & 23.65 & \textbf{28.89} & 19.84          & 29.68 & \textbf{32.30} & 13.25          \\
                      & 20 & 28.96 & \textbf{33.68} & 20.12          & 22.97 & \textbf{30.32} & 10.94          & 28.96 & \textbf{33.37} & -21.09         \\ \hline
\multirow{6}{*}{SSIM} & 1 & 0.905 & 0.960          & \textbf{0.963} & 0.862 & 0.914          & \textbf{0.956} & 0.905 & 0.956          & \textbf{0.961} \\
                      & 2 & 0.905 & 0.955          & \textbf{0.963} & 0.858 & 0.908          & \textbf{0.955} & 0.905 & 0.951          & \textbf{0.961} \\
                      & 5 & 0.898 & 0.935          & \textbf{0.962} & 0.857 & 0.880          & \textbf{0.950} & 0.898 & 0.932          & \textbf{0.961} \\
                      & 10 & 0.893 & 0.907          & \textbf{0.950} & 0.817 & 0.831          & \textbf{0.908} & 0.893 & 0.902          & \textbf{0.950} \\
                      & 15 & 0.876 & \textbf{0.893} & 0.849          & 0.698 & \textbf{0.799} & 0.689          & 0.876 & \textbf{0.889} & 0.849          \\
                      & 20 & 0.850 & \textbf{0.917} & 0.887          & 0.662 & \textbf{0.841} & 0.772          & 0.850 & \textbf{0.915} & 0.878          \\ \hline
\end{tabular}%
}
\end{table}

\subsection{Image Inpainting}\label{sec:inpainting}
We additionally consider the problem of image inpainting with added noise on STL10, in a similar setting to image denoising. 20\% of the pixels in the image were randomly chosen to be zero to create a fixed mask $Z$, and 5\% Gaussian noise was added to the masked images to create noisy masked images $y$. The inpainting problem is to minimize the distance between the masked reconstructed image and the noisy masked image, including TV regularization. The corresponding convex optimization problem is%
\begin{equation}
        \min_{x\in \mathcal{X}}\, \|Z \circ (x-y)\|^2_{\mathcal{X}}\, +\, \lambda\|\nabla x\|_{1, \mathcal{X}},
\end{equation}
where $Z$ denotes the masking map $\mathcal{S} \mapsto \{0,1\}^d$, and the image difference $x-y$ is taken pixel-wise. The corresponding function class that we wish to learn to optimize over is:
\begin{equation}\label{eq:inpaintingFnClass}
    \mathcal{F} = \left\{f(x) = \|Z \circ (x-y)\|^2_{\mathcal{X}}\, +\, \lambda\|\nabla x\|_{1, \mathcal{X}}\ : \text{noisy masked images } y \right\}.
\end{equation}
    
The initializations $x$ were taken to be the noisy masked images $x=y$. We trained the LMD method on the training fold of STL10, and evaluated it on images in the test fold. The TV regularization parameter was chosen to be $\lambda=0.3$ as in the denoising case, and the mirror potentials are parameterized with a convolutional neural network similar to that used in the denoising experiment. We trained the LMD method on the training fold of STL10, and evaluated it on images in the test fold. 

\cref{fig:inpaint_recon} shows the loss evolution of applying the LMD algorithm to the function class of inpainting models \eqref{eq:inpaintingFnClass}. LMD with sufficiently large step-size outperforms GD and Adam, however having too small of a step-size can lead to instability. We can also clearly see the effect of approximating our backward maps, as some of the LMD methods result in asymptotic reconstruction loss that is higher than a minimum. Nonetheless, adaptive LMD results in the best convergence out of the tested methods.


\cref{fig:inpaintVis} provides a visualization of the resulting iterations. \cref{fig:inpaintVisfwd} plots the ratio between the forward mapped masked image $\nabla M_\theta(y)$ and masked image $y$, with clipped values to prevent blowup in the plot. We can again see a faint outline of the horse indicating a region of interest, with some speckling due to the image mask. \cref{fig:inpaintVismd} is a plot of the result after 20 iterations of adaptive LMD, and it is qualitatively quite similar to the result after 20 iterations of Adam, demonstrating the feasibility of LMD as a solver for model-based reconstruction.

\begin{table}[]
\centering
\caption{Hyper-parameters for the problem classes considered.}
\label{tab:hyperparams}
\resizebox{\textwidth}{!}{%
\begin{tabular}{@{}lcccc@{}}
\toprule
                     & \multicolumn{1}{l}{SVM} & \multicolumn{1}{l}{Linear Classifier} & \multicolumn{1}{l}{Denoising} & \multicolumn{1}{l}{Inpainting} \\ \midrule
Batch size                       & 2000      & 2000      & 10     & 10     \\
Epochs                           & 10,000    & 10,000    & 1300   & 1100   \\ \midrule
                                 & \multicolumn{4}{c}{All}                 \\ \midrule
ICNN training parameters (Adam) & \multicolumn{4}{c}{$\alpha=10^{-5}, \beta = (0.9,0.99)$}                                                                         \\
Learned iterations $N$           & \multicolumn{4}{c}{10}                  \\
Learned step-size initialization & \multicolumn{4}{c}{$10^{-2}$}           \\
Learned step-size range          & \multicolumn{4}{c}{$(10^{-3},10^{-1})$} \\
Testing base step-size (LMD,GD) & \multicolumn{4}{c}{$10^{-2}$}           \\
Testing base step-size (Adam)    & \multicolumn{4}{c}{$5\times 10^{-2}$}   \\ \bottomrule
\end{tabular}%
}
\end{table}

\begin{figure}%
    \centering
    \includegraphics[width=.9\textwidth,keepaspectratio]{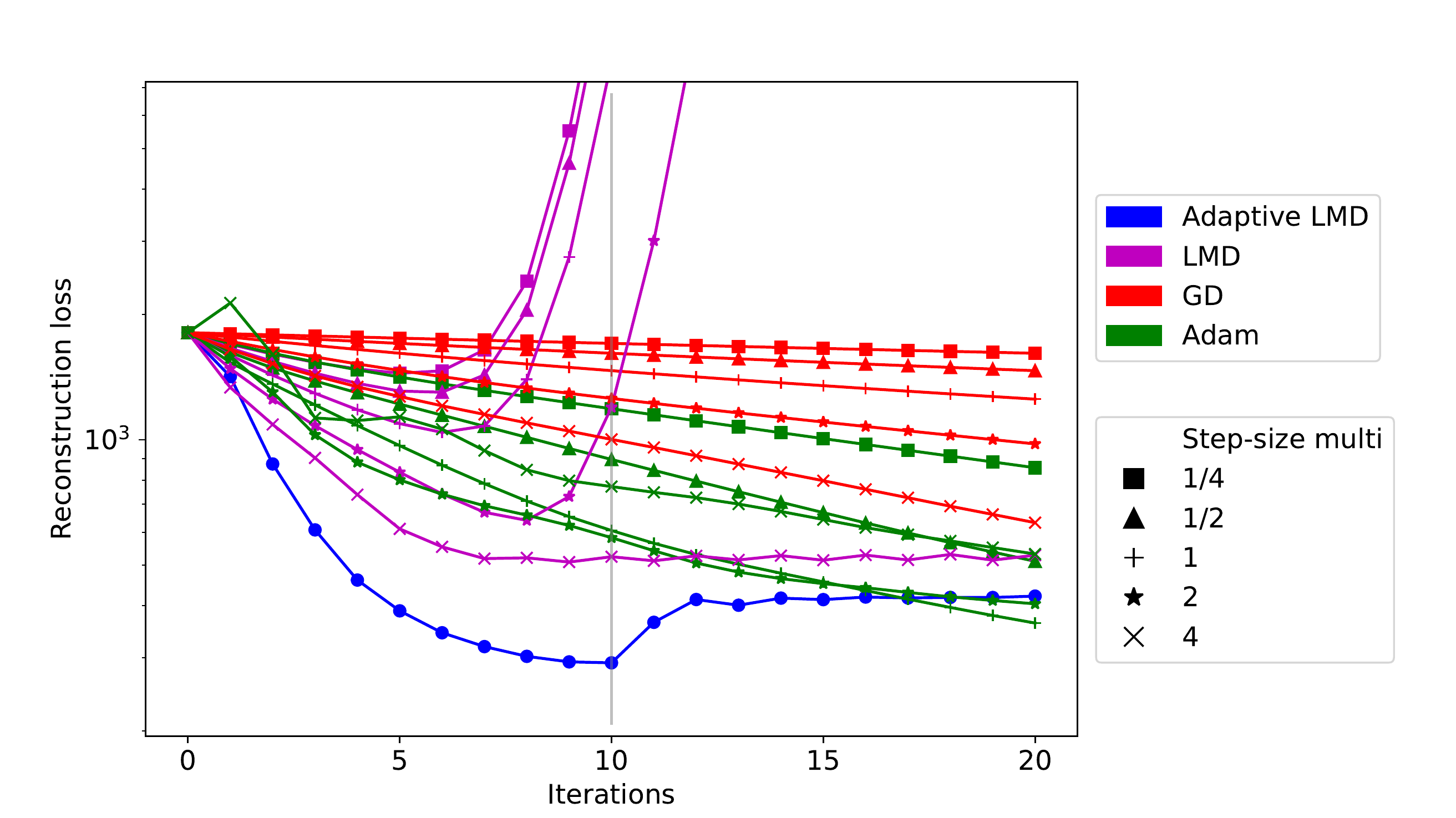}
    \caption{Inpainting reconstruction loss. The vertical gray line at iteration 10 indicates the end of the training regime. LMD outperforms both GD and Adam, however suffers from instability when the step-size is small, as remarked in \cref{rem:smallSS}. The increase in loss after 10 iterations for adaptive LMD is due to the choice of step-size to extend the trained 10 iterations. }
    \label{fig:inpaint_recon}
\end{figure}%
\begin{figure}%
    \centering
    \subfloat[\centering Ratio between forward map and masked image ]{{\includegraphics[height=3.6cm]{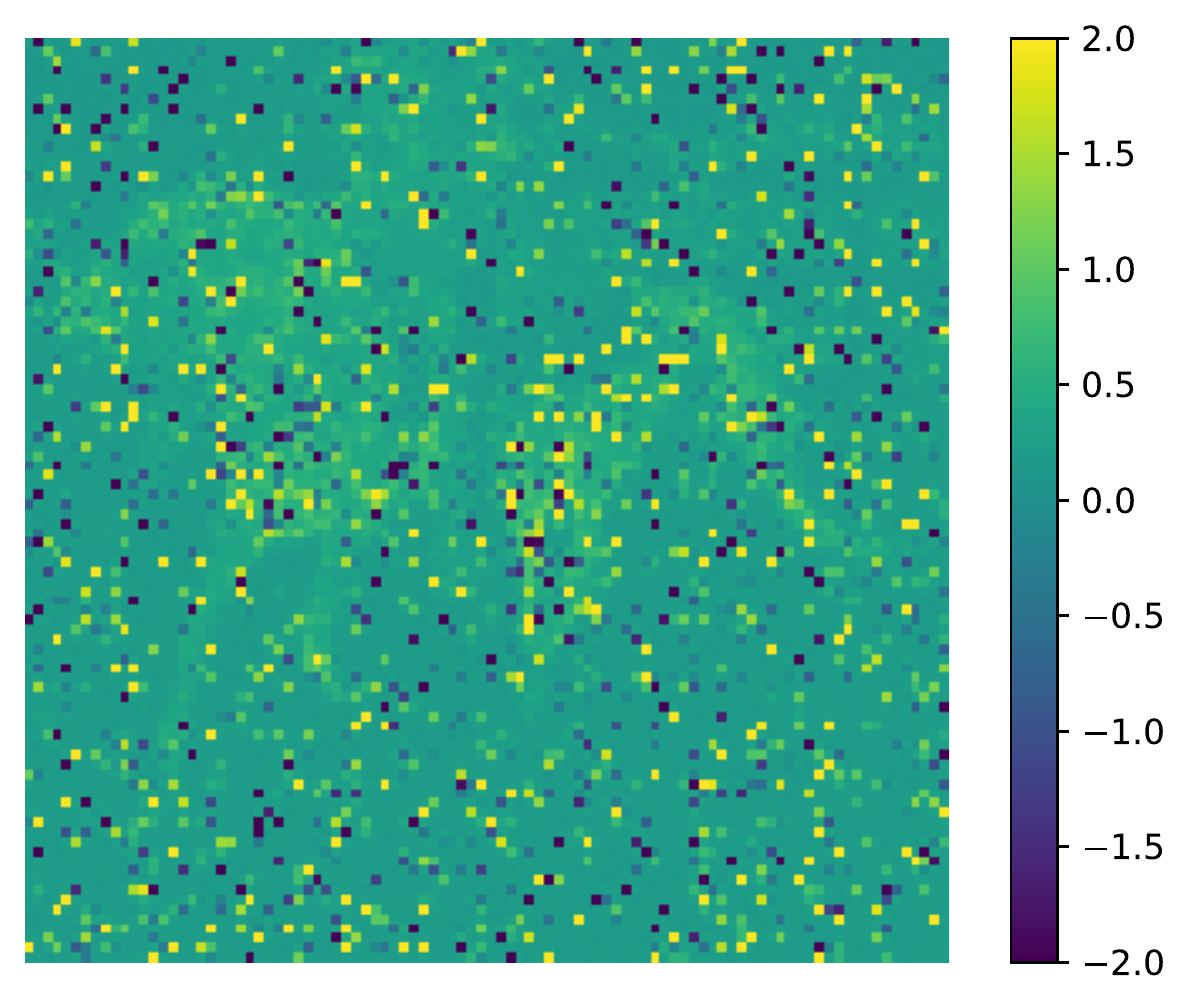}\label{fig:inpaintVisfwd} }}%
    \subfloat[\centering Reconstruction after 10 iterations of adaptive LMD]{{\includegraphics[height=3.6cm]{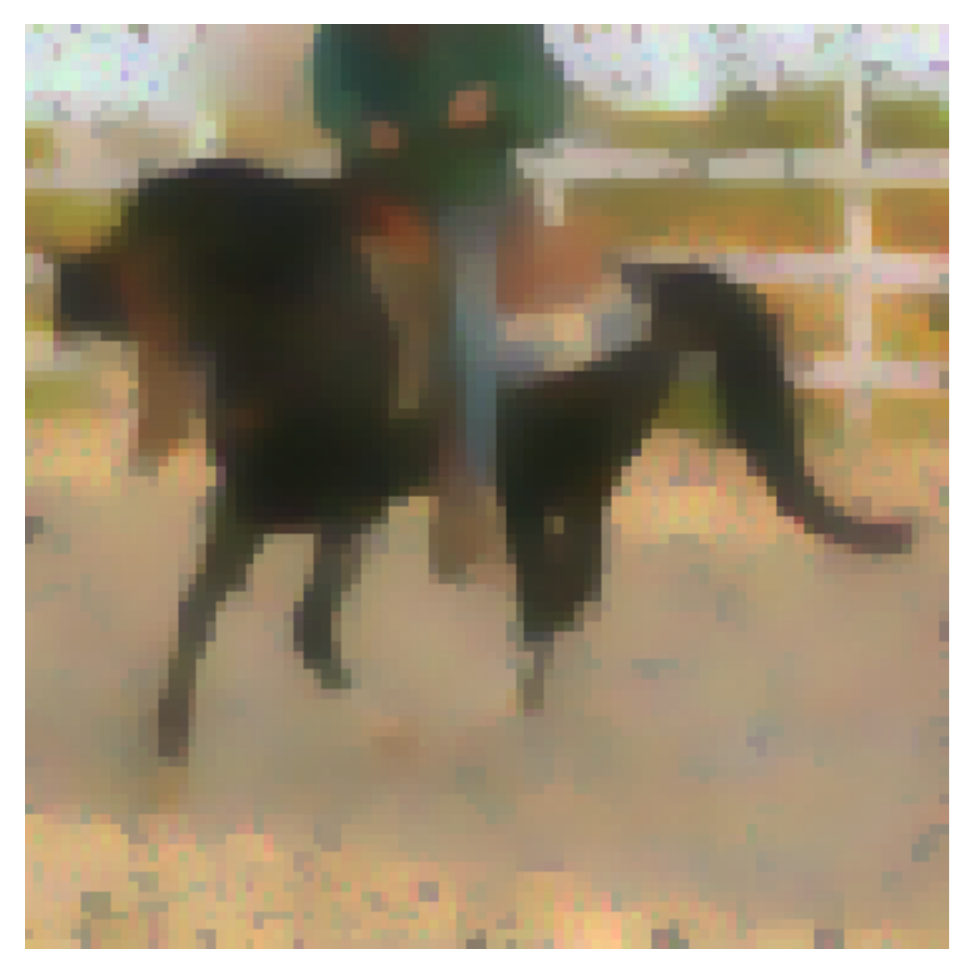}\label{fig:inpaintVismd}}}%
    \subfloat[\centering Reconstruction after 10 iterations of Adam]{{\includegraphics[height=3.6cm]{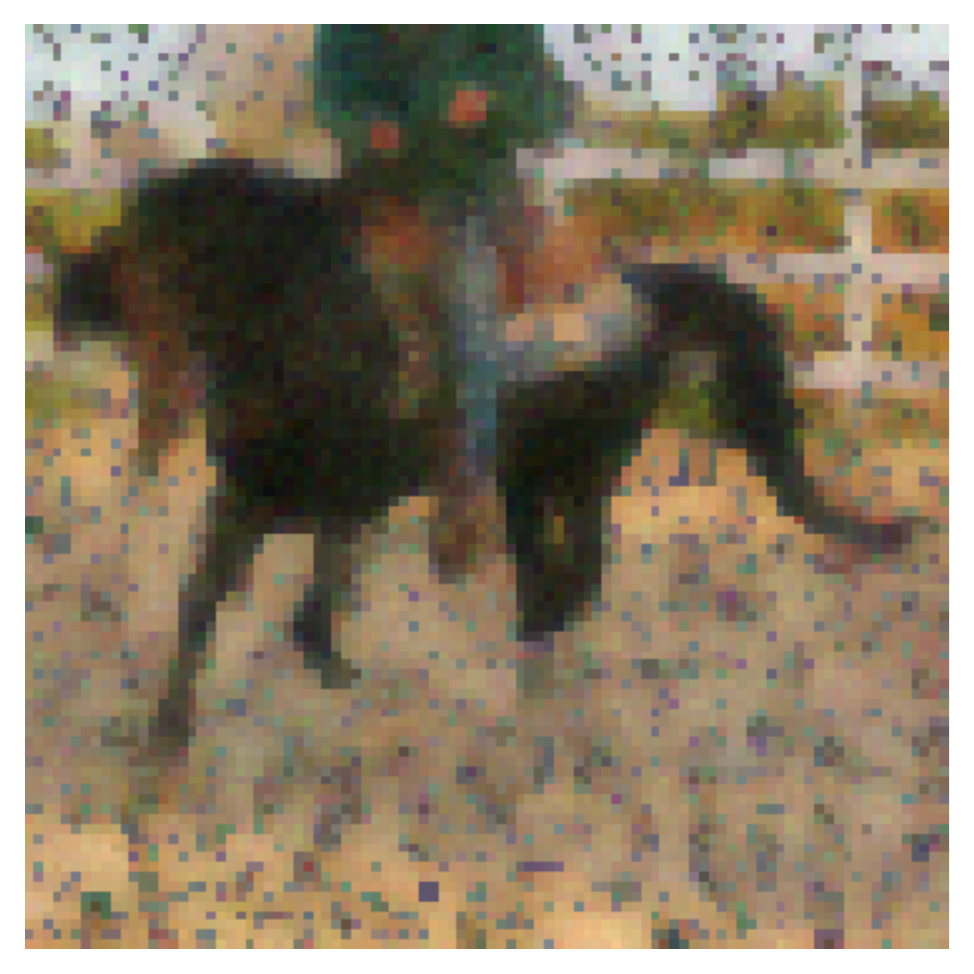}}}%
    \subfloat[\centering Reconstruction after 20 iterations of Adam]{{\includegraphics[height=3.6cm]{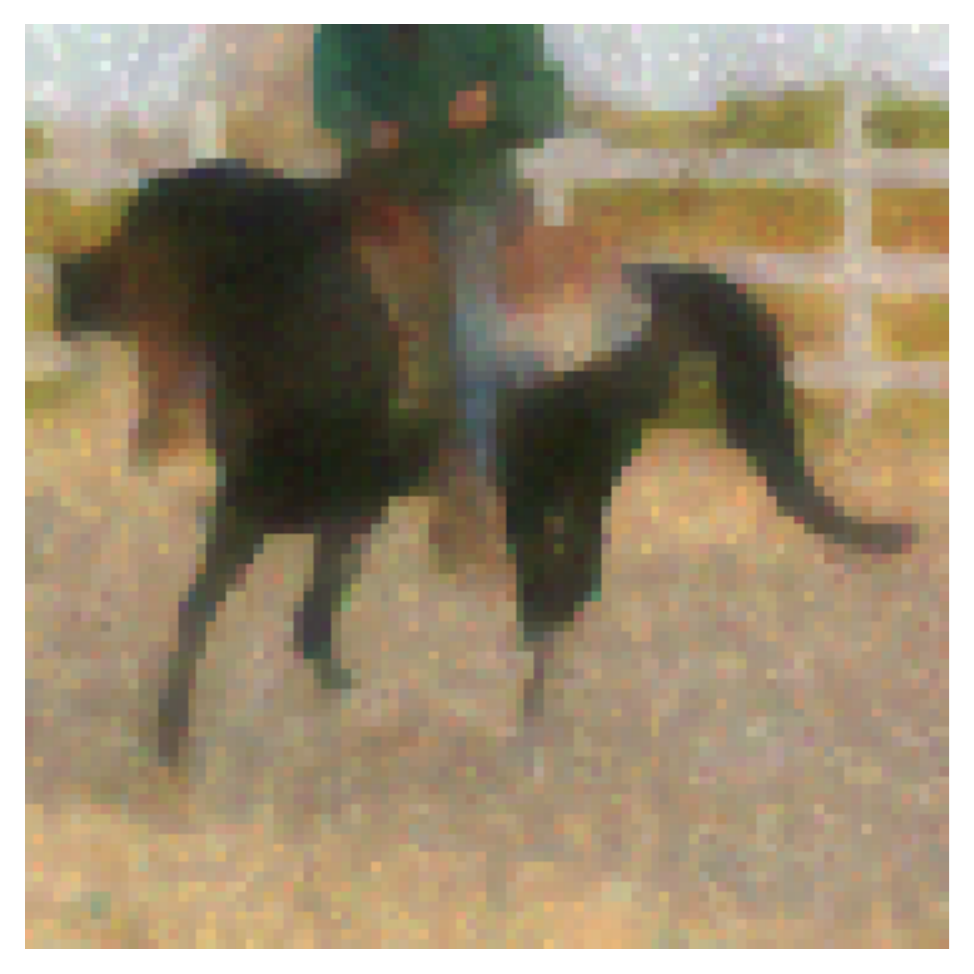}\label{fig:inpaintVisadam}}}%
    \caption{Visualization of some LMD on TV model-based inpainting. While a faint outline of the horse is visible, it is not as clear as in \cref{fig:denoiseVis} with speckling due to the zeroing mask. LMD is able to reach a reasonable reconstruction in fewer iterations compared to Adam. While the LMD reconstruction has artifacts around the edges, the Adam reconstruction is generally noisy.}
    \label{fig:inpaintVis}%
\end{figure}%

\subsection{Effect of Regularization Parameter}
We now turn to studying the effect of the regularization parameters used to enforce consistency of the forward and backward mirror maps\sm{associated with training. Mention that clearly}\hy{fixed}. The regularization parameter $s_k = s_{\text{epoch}}$ as in \eqref{eq:trainingLoss} was initialized as 1, and subsequently multiplied by 1.05 every 50 epochs. 

Under the assumption that the model is trained well for each regularization parameter, the training loss gives a perspective into the trade-off between the loss and the forward-backward consistency of the learned mirror maps. Informally, the model will try to learn a one-shot method similar to an end-to-end encoder-decoder model. Increasing the forward-backward regularization parameter $s_{\text{epoch}}$ reduces this one-shot effect, and encourages a proper optimization scheme to emerge. Therefore, it is natural that the objective loss will increase as the forward-backward loss decreases. This effect can be seen in \cref{fig:progSVM}, where the objective loss starts very low but then increases as the forward-backward error decreases. This could be interpreted as the LMD learning a single good point, then switching to learning how to optimize to a good point. In addition to encouraging a proper optimization scheme, increasing the forward-backward regularization parameter has the added effect of encouraging the forward-backward loss to continue decreasing. This can be seen in \cref{fig:progInpaint}, where the objective loss also decreases before increasing again.%


\sm{You need to explain the findings of the experiments in the main text, while making a reference to the corresponding image/plot. Additionally, the figure captions should give a concise description of the setting and the key takeaways.}\hy{attempted.}


\begin{figure}
\centering
\begin{minipage}[t]{.48\textwidth}
  \centering
  \includegraphics[width=\linewidth]{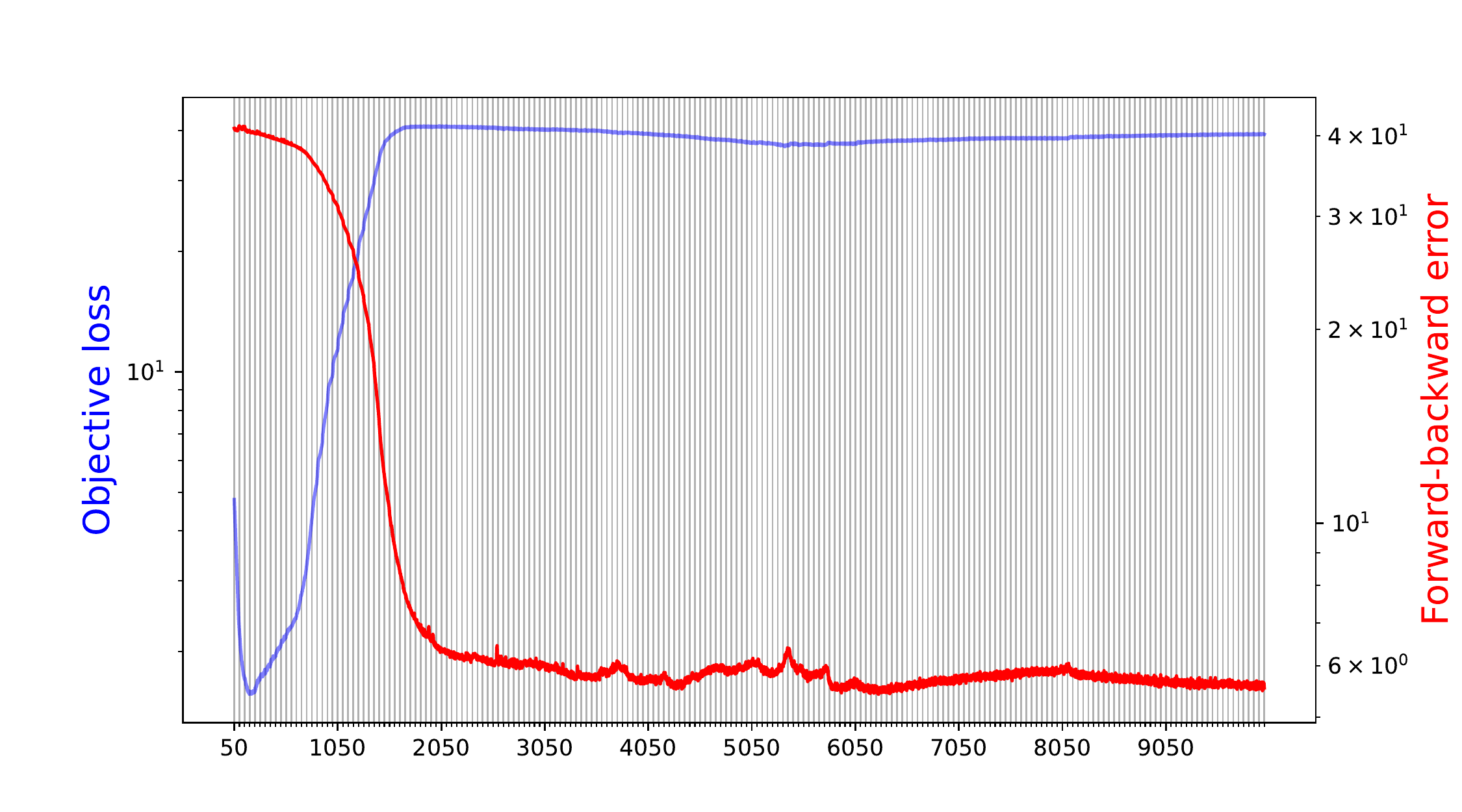}
  \captionof{figure}{Training loss and forward-backward consistency loss when training an SVM, plotted against training epochs. We can see clearly the tradeoff between the loss and forward-backward loss at the earlier iterations. Each vertical grey line corresponds to an epoch where the forward-backward loss regularization is increased.}
 \label{fig:progSVM}
\end{minipage} \quad \begin{minipage}[t]{.48\textwidth}
  \includegraphics[width=\linewidth]{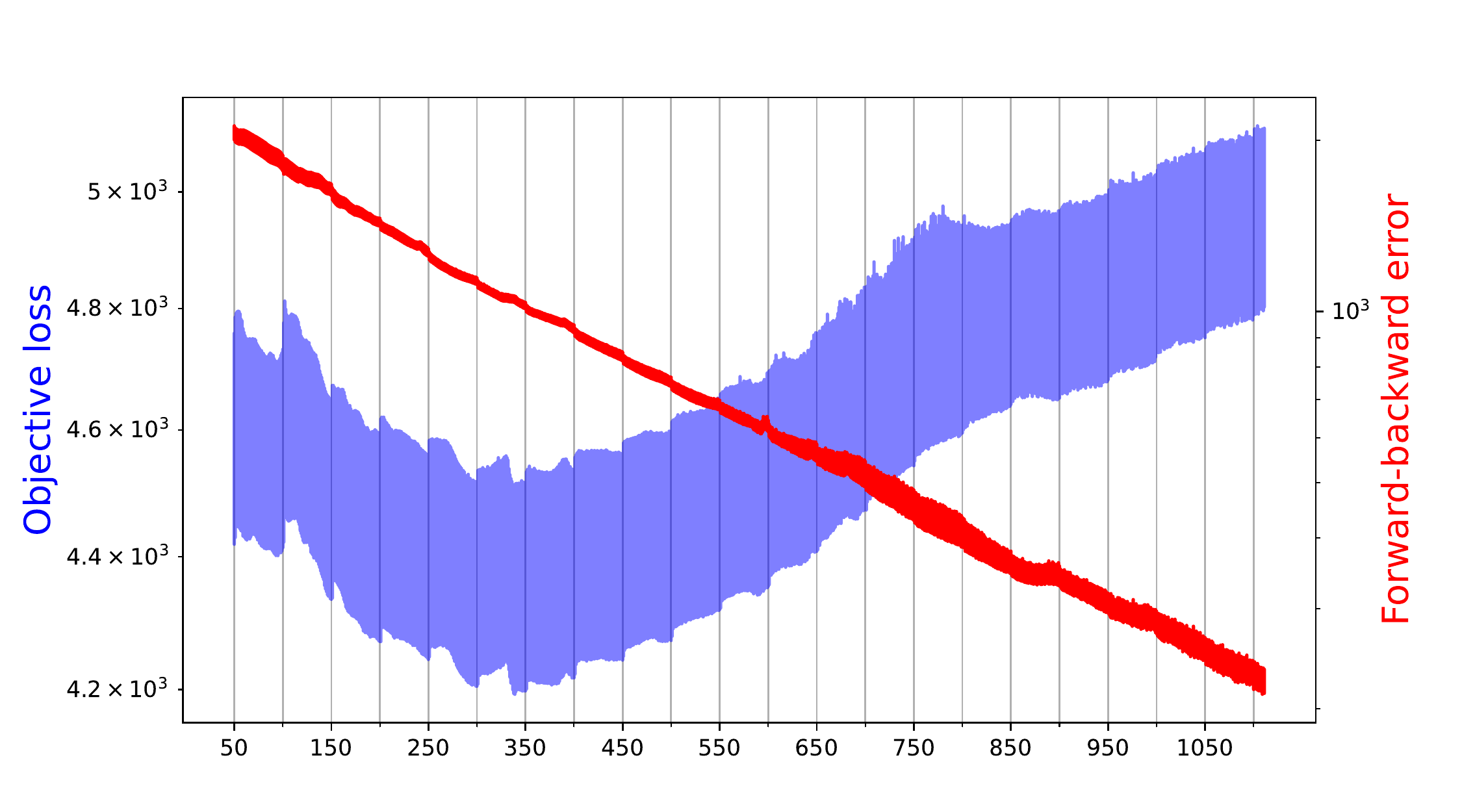}
  \captionof{figure}{Training loss and forward-backward consistency loss when training inpainting on STL10, plotted against training epochs. We can see the effect of increasing the forward-backward regularization parameter as the forward-backward loss continues to decrease along the iterations, while the loss begins to increase. }
  \label{fig:progInpaint}
\end{minipage}
\end{figure}

\subsection{Ablation Study}
In this section, we will compare the effect of various design choices on LMD. In particular, we will consider (i) the effect of the number of training iterations $N$, (ii) the effect of not enforcing the forward-backward consistency by setting $s_k = 0$, and (iii) a further comparison against GD with learned step-sizes (LGD). In particular, the first experiment will be LMD trained with $N=2$. The latter experiment is equivalent to our LMD with both mirror maps fixed to be the identity. We will compare these three experiments on the inpainting setting as in \Cref{sec:inpainting}. \Cref{fig:ablation} compares the forward-backward inconsistency and the loss for these three experiments with LMD trained for inpainting, detailed in \Cref{sec:inpainting}. For each of these methods, we choose to extend the learned step-sizes by a constant, up to 20 iterations. 

For experiment (i), decreasing the number of training iterations $N$ severely impacts the forward-backward inconsistency. Moreover, the number of training iterations is insufficient to be close to the minimum of the problem. These problems coupled together lead to the loss converging to a poor value, or diverging depending on the step-size extension.

For experiment (ii), setting $s_k=0$ in \cref{eq:trainingLoss} and not enforcing forward-backward consistency results in high forward-backward loss. Nonetheless, the loss rapidly decreases in the first couple iterations, faster than LMD. This is consistent with the view that the pair of mirror potentials acts as an encoder-decoder network, rapidly attaining close to the minimum. Due to the higher forward-backward loss, this method has looser bounds on the convergence, resulting in the increase in reconstruction loss in the later iterations compared to LMD.

For experiment (iii), learning the step-sizes for GD directly results in significantly worse performance compared to LMD. This can be attributed to LMD learning the direction of descent via the mirror maps, in addition to the speed of descent given by the learned step-sizes. This demonstrates that a better direction than steepest descent exists, can be learned by LMD and results in faster convergence rates.

These experiments demonstrate the effect of the variables of LMD. In particular, we show that a sufficient number of training iterates is required to maintain longer term convergence, and that enforcing forward-backward consistency sacrifices some early-iterate convergence rate for better stability for longer iterations. Moreover, the LGD experiment shows that learning a direction via the mirror maps in addition to the speed of convergence allows for faster convergence.

\begin{figure}[h]%
    \centering
    \subfloat[\centering Evolution of forward-backward loss. ]{{\includegraphics[clip, trim=25 0 190 0,  height=6.1cm]{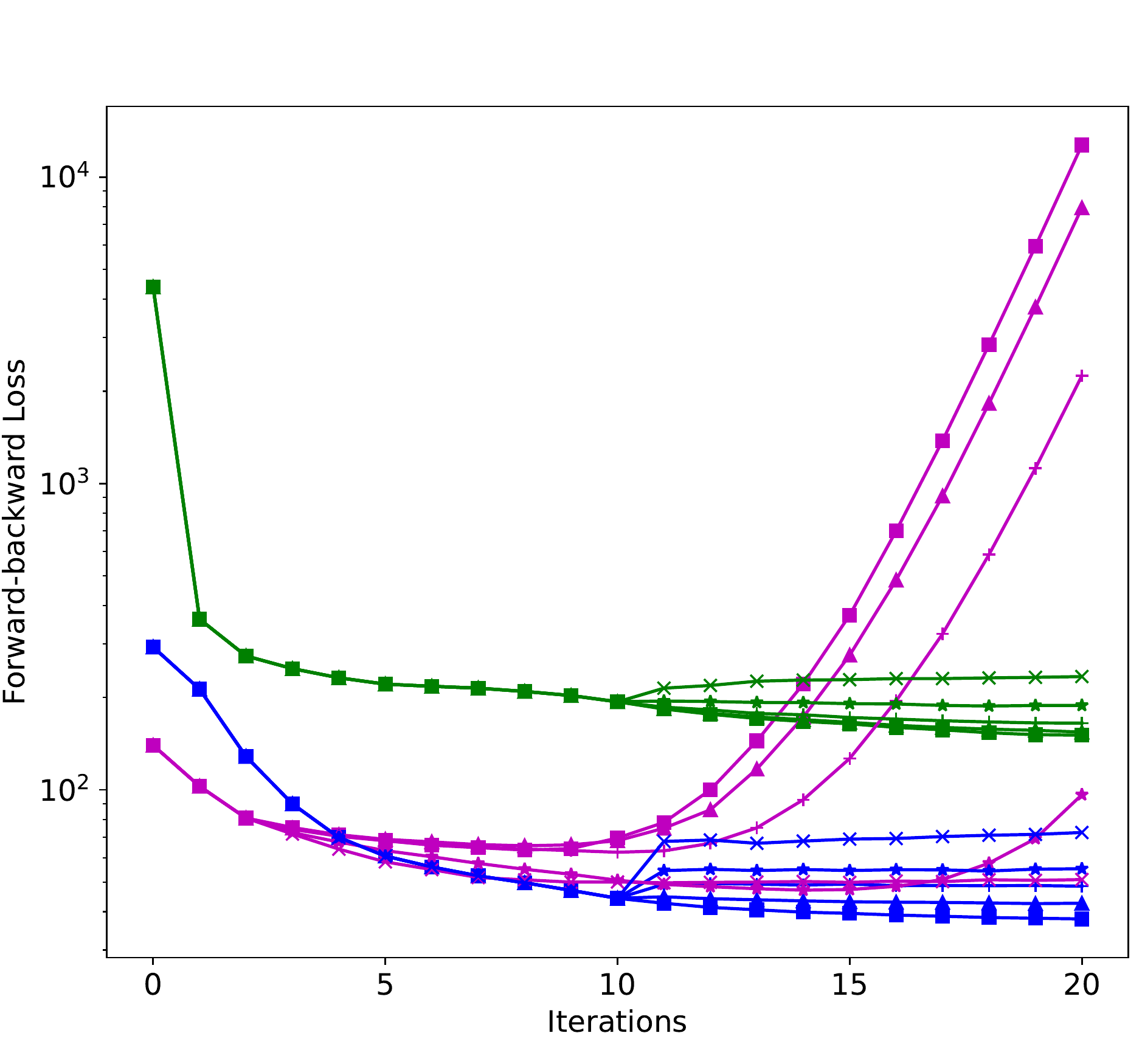}\label{fig:altfwdbwd} }}%
    \subfloat[\centering Evolution of inpainting reconstruction loss. ]{{\includegraphics[clip, trim=25 0 0 0, height=6.1cm]{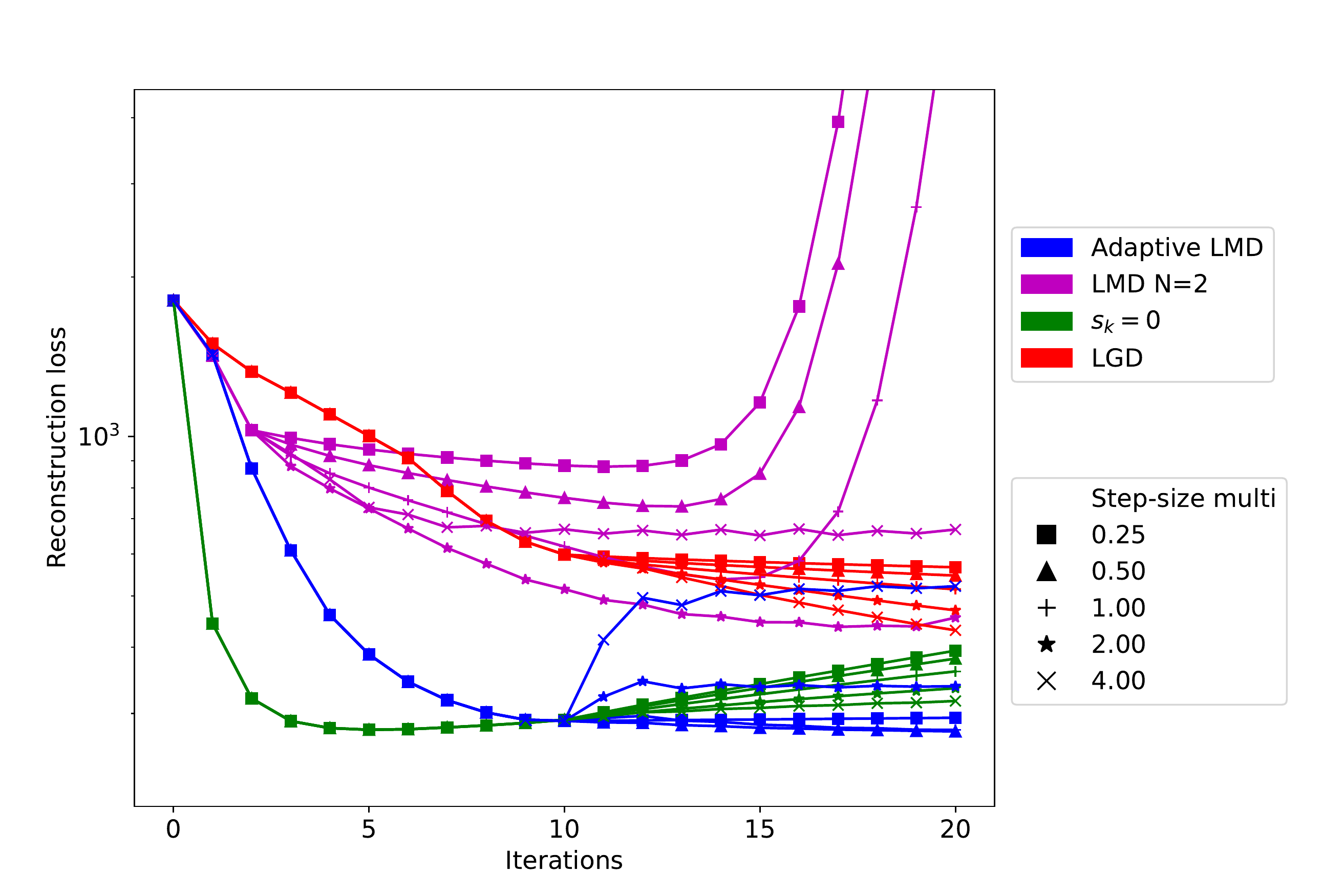}\label{fig:altloss}}}%
    \caption{Ablation study considering the forward-backward loss and reconstruction loss for image inpainting. We consider (i) training a small number of iterations $N=2$, (ii) training without enforcing the forward-backward inconsistency $s_k=0$, and (iii) training where the mirror maps are fixed to be the identity, corresponding to GD with learned step-sizes (LGD). Adaptive LMD is trained for $N=10$ iterations, as in \Cref{sec:inpainting}. Note that LGD does not have a forward-backward loss, as the iterates are exact. }
    \label{fig:ablation}%
\end{figure}%

\subsection{Computational Complexity}
In this subsection, we discuss the computational complexity of the LMD method in terms time and memory, for both training and testing time.


For a single backward and forward pass, the proposed method scales linearly with the dimension and number of iterations. In particular, suppose the space we wish to optimize is over $\mathcal{X}\subseteq \mathbb{R}^d$ with dimension $d$ and batch size $n$, and we run $N$ iterations of LMD. Assuming that backpropagating and taking gradients scales linearly with the number of parameters $P$, the backwards pass takes $\mathcal{O}(n\times d\times N \times P)$ time and $\mathcal{O}(n\times d\times N \times P)$ memory. The forwards pass takes $\mathcal{O}(n\times d\times N \times P)$ time and $\mathcal{O}(n\times d\times P)$ memory, where we drop a factor of $N$ as holding intermediate iterates is not required. 

\Cref{tab:gputime} compares the GPU wall-times and memory consumption for various numbers of training iterations $N$, tested for the STL-10 inpainting experiment for both training and testing. We find that the times and memory consumption are as expected, with near-linear increase in time and train memory, and near-constant test memory.

\begin{table}[h]
\centering
\caption{Table of GPU wall time and memory consumption for training and testing LMD, with various iteration counts. Times are per batch, with a batch-size of 25 on STL-10 images with dimension $3 \times 96 \times 96$. Training and testing was done on Quadro RTX 6000 GPUs with 24GB of memory.}
\label{tab:gputime}
\resizebox{\textwidth}{!}{%
\begin{tabular}{@{}lllll@{}}
\toprule
Iterations & Train time (s) & Test time (s) & Train memory (GB) & Test memory (GB) \\ \midrule
$N=2$  & 5.13  & 0.422 & 8.24  & 1.52 \\
$N=5$  & 8.26  & 1.05  & 12.68 & 1.53 \\
$N=10$ & 13.41 & 2.11  & 20.09 & 1.54 \\ 
$N=20$ & -     & 4.22  & -     & 1.57 \\
$N=50$ & -     & 10.55 & -     & 1.64 \\
\bottomrule
\end{tabular}%
}
\end{table}
\color{black}

\section{Discussion and Conclusions}
\label{sec:conclusions}
%



In this work, we proposed a new paradigm for learning-to-optimize with theoretical convergence guarantees, interpretability, and improved numerical efficiency for convex optimization tasks in data science, based on learning the optimal Bregman distance of mirror descent modeled by input-convex neural networks.  Due to this novel functional parameterization of the mirror map, and by taking a structured and theoretically-principled approach, we are able to provide convergence guarantees akin to the standard theoretical results of classical mirror descent. We then demonstrate the effectiveness of our LMD approach via extensive experiments on various convex optimization tasks in data science, comparing to classical gradient-based optimizers. The provable LMD approach achieves competitive performance with Adam, a heuristically successful method. However, Adam lacks convergence guarantees for the convex case, achieving only local convergence \cite{reddi2019adam, bock2019Adam, zou2018adam}. LMD is able to achieve the fast convergence rates from Adam, while retaining convergence guarantees from slower classical methods such as GD. 

In this paper, we have only considered the most basic form of mirror descent as our starting point. There is still much potential for further improvements on both theoretical results and numerical performance of the algorithm. If a deep parameterization of convex functions with closed form convex conjugate exists, then this would allow for exact convergence. One open question is what an optimal mirror map should look like for a particular problem class such as image denoising, and how well a deep network is able to approximate it. Our ongoing works include accelerating the convergence rates of LMD with momentum acceleration techniques which have been developed for accelerating classical mirror descent \cite{hanzely2021accelerated,krichene2015accelerated}, and stochastic approximation schemes \cite{xu2018accelerated}.

\bibliographystyle{siamplain}
\bibliography{references}
\end{document}


\maketitle

\section[Proof of Lemma]{Proof of \cref{lem:approxMD}}
Recall the approximate mirror descent scheme:
\begin{equation}
    \begin{split}
        &\tilde{x}_1 = x_1,\\
        &x_{k+1} = \arg \min_{x \in X}\left\{\langle x, t_k f'(\tilde{x}_k) \rangle + B(x, \tilde{x}_k)\right\} = \nabla \Psi^* (\nabla\Psi(\tilde{x}_k) - t_k f'(\tilde{x}_k)), \\
        & \tilde{x}_{k+1} \approx x_{k+1}.
    \end{split}
\end{equation}
\begin{proof}
We apply an amortization and splitting technique similar to that in \cite{Duchi10compositeobjective,BECK2003167}. We wish to find an upper bound on the following:
\[t_k f(\tilde{x}_k) - t_k f(x^*) + (B(x^*, \tilde{x}_{k+1}) - B(x^*, \tilde{x}_{k}))\]
Note, since $\nabla \Psi^* = (\nabla \Psi)^{-1}$,
\[\nabla \Psi(x_{k+1}) = \nabla\Psi(\tilde{x}_k) - t_k f'(\tilde{x}_k).\]
We have the following computation:
\begin{align*}
    &\quad B(x^*, \tilde{x}_{k+1}) - B(x^*, \tilde{x}_{k}) \\
    &= \Psi(\tilde{x}_{k}) - \Psi(\tilde{x}_{k+1}) - \langle\nabla \Psi(\tilde{x}_{k+1}) ,x^* - \tilde{x}_{k+1}\rangle + \langle\nabla \Psi(\tilde{x}_{k}) ,x^* - \tilde{x}_{k}\rangle  \\
    &= \Psi(\tilde{x}_{k}) - \Psi(\tilde{x}_{k+1}) - \langle  \nabla\Psi (x_{k+1}), x^* - \tilde{x}_{k+1} \rangle - \langle \nabla \Psi(\tilde{x}_{k+1}) \\
    & \qquad - \nabla \Psi(x_{k+1}), x^* - \tilde{x}_{k+1} \rangle + \langle \nabla \Psi(\tilde{x}_k), x^* - \tilde{x}_k\rangle \\
    &= \Psi(\tilde{x}_{k}) - \Psi(\tilde{x}_{k+1}) - \langle \nabla \Psi(\tilde{x}_k) - t_k f'(\tilde{x}_k), x^* - \tilde{x}_{k+1} \rangle \\
    &\qquad - \langle \nabla \Psi(\tilde{x}_{k+1})  - \nabla \Psi(x_{k+1}), x^* - \tilde{x}_{k+1} \rangle + \langle \nabla \Psi(\tilde{x}_k), x^* - \tilde{x}_k\rangle \\
    &= \Psi(\tilde{x}_k) - \Psi(\tilde{x}_{k+1}) + \langle \nabla \Psi(\tilde{x}_k), \tilde{x}_{k+1} - \tilde{x}_k \rangle \\
    &\qquad + \langle t_k f'(\tilde{x}_k),  x^* - \tilde{x}_{k+1} \rangle \\
    &\qquad - \langle \nabla \Psi(\tilde{x}_{k+1}) - \nabla \Psi(x_{k+1}), x^* - \tilde{x}_{k+1} \rangle.
\end{align*}

The first line in final expression is exactly $-B(\tilde{x}_{k+1}, \tilde{x}_{k})$.  By $\sigma$-strong-convexity of $\Psi$, this is less than $-\sigma \|\tilde{x}_{k+1} - \tilde{x}_{k}\|^2/2$. So we have:
\begin{align*}
    &B(x^*, \tilde{x}_{k+1}) - B(x^*, \tilde{x}_{k}) \\
    &\le -\frac{\sigma}{2} \|\tilde{x}_{k+1} - \tilde{x}_{k}\|^2 + \langle t_k f'(\tilde{x}_k),  x^* - \tilde{x}_{k+1} \rangle - \langle \nabla \Psi(\tilde{x}_{k+1}) - \nabla \Psi(x_{k+1}), x^* - \tilde{x}_{k+1} \rangle
\end{align*}

Returning to the amortization.

\begin{align*}
    &\quad t_k f(\tilde{x}_k) - t_k f(x^*) + (B(x^*, \tilde{x}_{k+1}) - B(x^*, \tilde{x}_{k})) \\
    &\le t_k f(\tilde{x}_k) - t_k f(x^*) + \langle t_k f'(\tilde{x}_k), x^* - \tilde{x}_k\rangle \\
    & \qquad + \langle t_k f'(\tilde{x}_k) , \tilde{x}_k - \tilde{x}_{k+1} \rangle \\
    & \qquad - \frac{\sigma}{2}\|\tilde{x}_{k+1} - \tilde{x}_{k}\|^2 - \langle \nabla \Psi(\tilde{x}_{k+1}) - \nabla \Psi(x_{k+1}), x^* - \tilde{x}_{k+1} \rangle. \\
\end{align*}
\end{proof}

\section{Proof of \cref{thm:approxCOMID,thm:approxCOMIDConstrained}}

\bibliographystyle{siamplain}
\bibliography{references}